\patchcmd{\ttlh@hang}{\parindent\z@}{\parindent\z@\leavevmode}{}{}
\patchcmd{\ttlh@hang}{\noindent}{}{}{}
\newtheorem{thm}{Theorem}[section]
\newtheorem{cor}[thm]{Corollary}
\newtheorem{lemma}[thm]{Lemma}
\newtheorem{prop}[thm]{Proposition}
\newtheorem{definition}[thm]{Definition}
\newtheorem{conj}[thm]{Conjecture}
\theoremstyle{remark}
\theoremstyle{definition}
\newtheorem{rmk}[thm]{Remark}
\newtheorem{defn}[thm]{Definition}
 \renewcommand{\tilde}{\widetilde}
\newcommand{\vol}{{\textnormal{vol}_g\,}}
\newcommand{\voll}{{\textnormal{vol}_g}}
\renewcommand{\div}{{\textnormal{div}\,}}
\newcommand{\spt}{{\textnormal{supp}\,}}
\newcommand{\scal}{{\textnormal{scal}}}
\newcommand{\Ric}{{\textnormal{Ric}}}
\newcommand{\ric}{{\textnormal{ric}}}
\newcommand{\tr}{{\textnormal{tr}}}
\newcommand{\R}{{\mathbb{R}}}
\renewcommand{\S}{{\mathcal{S}}}
\newcommand{\T}{{\mathcal{T}}}
\newcommand{\G}{\mathbb{G}}
\newcommand{\M}{{\mathcal{M}}}
\newcommand{\Prob}{{\mathcal{P}}}
\newcommand{\U}{{\mathcal{U}}}
\def\eps{\varepsilon}
\def\Ch{\mathcal{C}_c}
\def\dd{{\rm d}}
\newcommand{\ro}{\varrho}
\def\ve{\eps}
\def\cU{\mathcal{U}}
\def\cN{\mathcal{N}}
\def\cV{\mathcal{V}}
\def\q{\mathbf{q}}
\def\proofof#1{\begin{proof}[Proof of #1]}
\def\part#1#2{\par\noindent{\underline{\it Part~#1.}}\emph{ #2}\\}
\begin{document}

\begin{flushright}

\baselineskip=4pt

\end{flushright}

\begin{center}
\vspace{5mm}

{\Large\bf THE FIVE GRADIENTS INEQUALITY \\[2mm]
ON DIFFERENTIABLE MANIFOLDS\\}
\


\vspace{5mm}

{\bf by}

\vspace{5mm}
{ \bf  Simone Di Marino$^{1}$, Simone Murro$^2$ and Emanuela Radici$^3$}\\[2mm]
\noindent  {\it $^1$ Universit\`a di Genova, Dipartimento di Matematica, MaLGa,\\ 
 Via Dodecaneso 35, I-16146 Genova (GE), Italy}\\[1mm]
 \noindent  {\it $^2$Universit\`a di Genova, Dipartimento di Matematica \& INFN,\\  Via Dodecaneso 35, I-16146 Genova (GE), Italy}\\[1mm]
\noindent  {\it $^3 $ Universit\`a degli studi dell'Aquila,  DISIM, \\
Via Vetoio 1,  67100 (Coppito) L'Aquila (AQ), Italy}\\[2mm]
email: \ {\tt simone.dimarino@unige.it}\,, \ {\tt murro@dima.unige.it}\,, \ {\tt  emanuela.radici@univaq.it}
\\[8mm]
\end{center}

\begin{abstract}
The goal of this paper is to derive the so-called five gradients inequality for optimal transport theory for general cost functions on two class of differentiable manifolds: locally compact Lie groups and compact Riemannian manifolds.
\bigskip

\begin{center}
\bf Résumé
\end{center}
Le but de cet article est de déterminer l'inégalité dite ``five gradients inequality'' pour la théorie du transport optimal avec une fonction coût générique, sur deux classes de variétés différentielles: les groupes de Lie localement compacts et les variétés riemanniennes compactes.
\end{abstract}

\paragraph*{Keywords:} Five gradients inequality, optimal transport, compact Riemannian manifolds, locally compact Lie groups.
\paragraph*{MSC 2020: } Primary 49Q20, 35A15;  Secondary  53C21, 22E30. 
\\[0.5mm]

\tableofcontents

\renewcommand{\thefootnote}{\arabic{footnote}}
\setcounter{footnote}{0}

\section{Introduction}

Among variational problems involving optimal transportation and Wasserstein distances, it has been shown in~\cite{5gi} that the so-called \emph{`five gradients inequality'} plays a distinguished role. Indeed, it has been used to derive BV and Sobolev estimates for the solutions of the JKO scheme for diffusion evolution equations in $\mathbb{R}^n$. This is the case for example of nonlinear diffusion~\cite{5gi}, weighted ultrafast diffusion~\cite{Iacobelli} and Fokker-Planck or Keller-Segel equations for chemiotaxis~\cite{Simo5gi}.  It  has been also used to provide bounds on the perimeters of solutions of some variational problems involving mutually singular measures~\cite{Buttazzo}. Remarkably the five gradients inequality allowed also us to establish stronger convergence estimates for the JKO scheme for the Fokker-Planck equations~\cite{SanTosh}.

From a physical perspective, working in $\R^n$ and with the quadratic cost function given by $c(x,y)=|x-y|^2$ is quite restrictive: a recent generalization for different costs is presented in \cite{Caillet}, however the setting is still the Euclidean space.
Many models inspired by physics and biology involve diffusions on interfaces~\cite{Christensen,Debbasch,Sbalzarini,smerlak}
and it is convenient to model them using Riemannian manifolds. 
It is then natural  to extend the results in~\cite{5gi,Caillet} to differentiable manifolds and the first main result of this paper is a derivation of the five gradients inequality for any compact, complete and smooth (actually $C^3$) Riemannian manifold for any cost function $c(x,y)=h(d(x,y))$ where $h$ is a convex function. 

 In the sequel, every probability measure $\mu \in \Prob(\M)$ will be always absolutely continuous with respect to the volume measure $\vol$; we will often, by a slight abuse of notation, identify measures with their density with respect to $\vol$.

\medskip

\begin{thm}\label{thm:main} Let $(\M,g)$ be a smooth compact manifold with Ricci curvature bounded from below by $K \in \R$. 
Let $d$ denotes the Riemannian distance, $h \in C^1([0,\infty))$ be some nonnegative strictly convex increasing function such that $h'(0)=0$  and assume that the cost function $c: \M\times \M \to \R$ is defined by 
$c(x,y):= h(d(x,y))$. 
 Finally, let $\ell $ be an increasing, convex, isotropic function on the tangent space such that $\ell(0)=0$ and let be $\mu, \nu \in W^{1,1}(\M) \cap \Prob(\M)$.
 \\ Then, denoting by $\gamma$, $\phi$ and $\psi$ respectively the optimal plan and the optimal Kantorovich potentials for the optimal transport problem between $\mu$ and $\nu$ for the cost $c$, it holds
\begin{equation}\label{eqn:5GIW11}
\int_{\M}  \Bigl(  \ell' (\nabla \phi) \cdot \nabla \mu +   \ell' (\nabla \psi) \cdot \nabla \nu  \Bigr) \vol  \geq K \int_{\M \times \M} \ell'(h'(d(x,y)))d(x,y) \, d \gamma.
\end{equation}
\end{thm}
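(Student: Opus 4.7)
My approach adapts the Euclidean strategy of \cite{5gi,Caillet} to the Riemannian setting, combining the Monge--Ampère equation on $\M$ with a Ricci-weighted Jacobi-field comparison in the spirit of Cordero-Erausquin--McCann--Schmuckenschl\"ager.

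First I would regularize: approximate $\mu,\nu\in W^{1,1}(\M)\cap\Prob(\M)$ by smooth strictly positive densities via the heat semigroup on the compact manifold $\M$. Stability of the Kantorovich potentials under Wasserstein convergence of the marginals, combined with continuity of both sides of~\eqref{eqn:5GIW11} under such regularization, reduces the problem to smooth data. For smooth positive $\mu,\nu$, McCann's theorem adapted to costs $c=h\circ d$ yields a unique optimal map $T\colon\M\to\M$ with $T_\#\mu=\nu$, and the $c$-Euler--Lagrange relation at points of differentiability gives
\[
\nabla\phi(x)\ =\ -h'(d(x,T(x)))\,\frac{v(x)}{|v(x)|},\qquad v(x):=\exp_x^{-1}(T(x)),
\]
so that $\ell'(\nabla\phi(x))$ is antiparallel to $v(x)$ with magnitude $\ell'(h'(d(x,T(x))))$ --- precisely the scalar factor appearing on the right-hand side of~\eqref{eqn:5GIW11}.

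The central computation is integration by parts combined with a pointwise Hessian estimate. Writing
\[
\int_\M \ell'(\nabla\phi)\cdot\nabla\mu\,\vol\ =\ -\int_\M \mu\,\mathrm{div}(\ell'(\nabla\phi))\,\vol,
\]
one has at points of twice-differentiability of $\phi$ that $\mathrm{div}(\ell'(\nabla\phi))=\mathrm{tr}(\ell''(\nabla\phi)\,D^2\phi)$. The $c$-concavity of $\phi$ forces $D^2\phi\leq D^2_x c(\,\cdot\,,T(x))$ as symmetric bilinear forms at $x$; since $\ell''(\nabla\phi)$ is positive semidefinite, tracing against it produces an upper bound involving a radial piece $h''(d)$ and a transverse piece $h'(d)\,D^2_x d(\,\cdot\,,T(x))$. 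The Ricci lower bound $K$ enters in the transverse piece through the Jacobi-field / Rauch comparison, producing exactly a correction of the form $-K\,\ell'(h'(d))\,d$ after tracing with $\ell''(\nabla\phi)$ along the directions orthogonal to the transport geodesic. Running the symmetric argument for $\psi$ and the inverse transport $T^{-1}$, summing the two pointwise inequalities, and using the Monge--Ampère identity $\mu=(\nu\circ T)\,J_T$ to see that the leftover entropy-type contributions cancel after integration against $\mu$ and $\nu$ respectively, yields~\eqref{eqn:5GIW11}.

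The principal obstacle is the Ricci comparison step. The classical CEMS estimate is calibrated for the Laplace--Beltrami operator (that is, for quadratic $\ell$) and for quadratic cost, while here the relevant operator is the nonlinear ``$\ell$-Laplacian'' $\mathrm{div}(\ell'(\nabla\phi))$ with position-dependent PSD weight $\ell''(\nabla\phi)$, and the cost is the general $c=h\circ d$. Extracting \emph{exactly} the factor $\ell'(h'(d))\,d$ requires a trace-weighted Jacobi-field / index-form analysis whose bookkeeping is the technical crux. A secondary, more routine difficulty is justifying the pointwise Hessian statements $\mu$-a.e.\ via an Alexandrov-type second-differentiability theorem, since $\phi$ is only $c$-concave and hence merely semiconcave.
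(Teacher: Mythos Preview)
Your proposal has a genuine gap at the core step. Bounding $D^2\phi(x)\le D^2_x c(\,\cdot\,,T(x))$ and $D^2\psi(y)\le D^2_y c(T^{-1}(y),\,\cdot\,)$ \emph{separately} via $c$-concavity and then summing is too weak: already in $\R^n$ with $c(x,y)=|x-y|^2/2$ this gives only $\Delta\phi(x)+\Delta\psi(T(x))\le 2n$, whereas what is needed is $\Delta\phi(x)+\Delta\psi(T(x))\le 0$. The Monge--Amp\`ere identity relates determinants, not the traces that appear after your integration by parts, and there is no mechanism by which ``entropy-type contributions'' close this $2n$ gap---the original argument of \cite{5gi} does not use $D^2\phi\le D^2_x c$ either, but rather the exact relation $D^2\phi=I-DT$ together with $\tr(DT)+\tr(DT^{-1})\ge 2n$, and that trick does not pass cleanly through the nonlinear weight $\ell''(\nabla\phi)$ nor to the curved setting. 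Likewise, Rauch comparison for $D^2_x d(\,\cdot\,,y)$ with $y$ \emph{fixed} produces cotangent-type factors in $d$, not the clean term $-K\,\ell'(h'(d))\,d$.

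The missing idea is a \emph{simultaneous} variation of both endpoints along the transport geodesic. On the contact set one perturbs $(x,y)\mapsto(x_s,y_s)$ by flowing $x$ and $y$ along directions related by parallel transport; the second variation of $d(x_s,y_s)$ is then governed by the index form of an (approximately) parallel field, and averaging over an orthonormal frame yields exactly $-K\,d$ via the very definition of Ricci curvature---no comparison theorem is invoked. This is the content of Section~\ref{sec:simultaneous} (Theorem~\ref{lemmone} and Corollary~\ref{corollario che serve per 4gi}), which feeds into the pointwise inequality of Proposition~\ref{corollario a 4gi su intornino}. Your regularization and stability outline matches the paper's; it is only the Hessian step that must be replaced.
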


The compactness hypothesis on $(\M,g)$ is not really necessary, as it can be replaced by imposing that $\mu$ and $\nu$ have compact support: in this way we can consider also spaces with constant negative curvature, which would be otherwise not allowed by the completeness assumption.

\begin{rmk}\label{rmk:ell}
Notice that $\ell' : [0,\infty) \to [0, \infty)$. In \eqref{eqn:5GIW11} we use $\ell'$ on a vector, which acts by modifying its distance from the origin, keeping the same direction, namely:
$$ \ell'(v)= \begin{cases} \ell'(\|v\|) \frac {v}{\|v\|}  \quad &\text{ if }v \neq 0 \\ 0 & \text{ otherwise.} \end{cases}$$
It would be interesting to have a generalization, that is a monotone function acting on vectors: however while $\ell'(v)$ acts isotropically, so we can canonically identify it on every tangent space, it is not clear how a generic monotone function should change from point to point. We will see that in fact when a canonical identification of tangent spaces exists (for example in Lie groups), a more general monotone function can be used (see Theorem \ref{thm:5gi Lie Group}).
\end{rmk}

\begin{rmk}\label{rmk:contraction}
It is not surprising that Ricci bounds for the curvature appear in the formula:  in fact it is to be expected since we can show that inequality \eqref{eqn:5GIW11} for $\ell(t)=h(t)=t^2/2$ for $K\geq 0$ yields $K$-contractivity of the $2$-Wasserstein distance along the heat flow, which in turn it is known to be equivalent to having the Ricci curvature bounded below by $K$ (see  \cite{stvr}). In fact let $\mu_0, \nu_0 \in  W^{1,1}(\M) \cap \Prob(\M)$, and let $\partial_t \mu_t= \Delta \mu_t$ and $\partial_t \nu_t= \Delta \nu_t$. Then $\mu_t, \nu_t \in W^{1,1}(\M) \cap \Prob(\M)$; let $\phi_t , \psi_t$ be the Kantorovich potentials relative to $\mu_t,\nu_t$ for $c(x,y)=d(x,y)^2/2$. Then by the first variation formula of the Wasserstein distance we obtain
\begin{align*} \frac d{dt} \frac 12 W_2^2(\mu_t, \nu_t) &= \int_{\M} \phi_t \partial_t \mu_t  \, d \vol +  \int_{\M} \psi_t \partial_t \nu_t  \, d \vol \\
& = \int_{\M} \phi_t \Delta \mu_t  \, d \vol +  \int_{\M} \psi_t \Delta \nu_t  \, d \vol  \\
& = - \int_{\M}  \nabla \phi_t  \cdot\nabla \mu_t  \, d \vol -   \int_{\M} \nabla \psi_t \cdot \nabla \nu_t  \, d \vol  \\
& \leq -K \int_{\M \times \M}  d^2(x,y) \, d \gamma_t = -K W_2^2(\mu_t,\nu_t); 
\end{align*}
by Gr\"onwall we eventually get $W_2(\mu_t,\nu_t) \leq e^{-Kt} W_2(\mu_0, \nu_0)$.
\end{rmk}

\medskip
A separate result holds in $\R^n$ and more generally on Lie groups $\G$, which can be of independent interest: in those cases, we obtain the inequality for any cost $c(x,y)$ which is $L$-Lipschitz and translation invariant (without requiring any convexity assumptions). The proof in this case is rather different and it is more inspired by the ``regularity by duality'' approach, very different from the one introduced in~\cite{5gi,Simo5gi,Caillet}, which is the reason why we make less restrictive assumptions.

We denote by $\mathfrak{g}$ the Lie algebra of the Lie group, that is the vector space of right-invariant vector fields on $\G$: this is naturally isomorphic to $T_e \G$ and it has an antisymmetric multiplication given by the Lie bracket. Given a subspace $\mathfrak{X} \subseteq \mathfrak{g}$, we say that it is a \emph{generating subspace} if the algebra generated by it is the whole $\mathfrak{g}$. Sometimes these are called also \emph{horizontal distributions}, and they have the following crucial property: when we define a control distance associated to this distribution (called Carnot-Caratheodory distance), obtained by minimizing the length of curves whose derivatives belong at each point to the corresponding \emph{horizontal subspace}, then this distance is finite (as long as $\G$ is connected) for every pair of points and moreover generate the same topology as the Riemannian distance.

Then a function $f: \G \to \R$ is called H-Lipschitz if it is Lipschitz with respect to the Carnot-Caratheodory distance, and the horizontal gradient $\nabla_H f$ is representing the restriction to the distribution $\mathfrak{X}$ of the differential of $f$: given an orthonormal basis $X_1, \ldots, X_m $ of $\mathfrak{X}$ we can identify $\nabla_H f = (X_1 f, \ldots, X_m f)$.

 \begin{thm}\label{thm:5gi Lie Group} Let $\G$ be a connected Lie group with Lie algebra $\mathfrak{g}$ and let $\mathfrak{X} \subseteq \mathfrak{g}$ be a right invariant generating distribution of dimension $m$ for $\mathfrak{g}$. Let us consider on $\G$ the control distance relative to $\mathfrak{X}$, $d(x,y)=d_{\mathfrak{X}}(x,y)$; let $h: \G \to [0, \infty)$ be a $L$-Lipschitz continuous function (with respect to $d$), and let $\mu, \nu \in W^{1,1}_H(\G)$  and let $G: \mathbb{R}^m \to [0,+\infty)$ be a convex function of the form $G(x)= \int_{S^{m-1}} f_v( v \cdot x ) \, d\sigma (v) $ for a family of even convex functions $f_v$ and a positive measure $\sigma$ on $S^{m-1}$.

Let  $\phi, \psi$ two optimal $L$-Lipschitz Kantorovich potentials for the optimal transport problem between $\mu$ and $\nu$ with the cost $c(x,y)=h (y^{-1} x)$. Then 
\begin{equation}\label{eqn:5giG} - \int_{\G} \Big( \nabla G ( \nabla_H\phi   ) \cdot \nabla_H \mu  + \nabla G ( \nabla_H \psi  ) \cdot \nabla_H \nu \Big) \, dx \leq 0 \,. 
\end{equation}

For any horizontal right-invariant vector field $X \in \mathfrak{X}$ we have $X=v \cdot (X_1, \ldots, X_m)$ for some $v$; considering the  special case of \eqref{eqn:5giG} with $G(w) = f ( w \cdot v)$, where $f\in C^1(\R)$ be an even convex function we obtain
\begin{equation}\label{eqn:5gif}
- \int_{\G} \Big( f' ( X (\phi)  ) X \mu  + f' ( X(\psi)  ) X \nu \Big) \, dx \leq 0 \,.
\end{equation}

 \end{thm}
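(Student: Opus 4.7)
The plan is to reduce \eqref{eqn:5giG} to the one-dimensional \eqref{eqn:5gif} and derive the latter from a duality argument with a tailored perturbation of the Kantorovich potentials. The representation $G(w)=\int_{S^{m-1}}f_v(v\cdot w)\,d\sigma(v)$ gives $\nabla G(w)\cdot y=\int_{S^{m-1}}f_v'(v\cdot w)(v\cdot y)\,d\sigma(v)$, so Fubini turns \eqref{eqn:5giG} into the $d\sigma$-average of \eqref{eqn:5gif} applied with $f=f_v$ and $X=X_v:=v\cdot(X_1,\ldots,X_m)$. Thus it suffices to prove \eqref{eqn:5gif} for an arbitrary horizontal right-invariant $X$ and an even convex $f\in C^1$; by mollification of $f$ we may further assume $f\in C^2$ (and recover the $C^1$ case by passing to the limit in the final $L^\infty\cdot L^1$ integrals).

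The main step is a perturbation inequality derived from the optimal plan $\gamma$. For any measurable $F,G\colon\G\to\R$ with $F(x)=G(y)$ for $\gamma$-a.e.\ $(x,y)$ and any $t\in\R$, since $\exp(aX)$ and $\exp(bX)$ commute for all $a,b\in\R$, the left-invariance of $c(x,y)=h(y^{-1}x)$ yields
\[
c\bigl(\exp(tF(x)X)x,\exp(tG(y)X)y\bigr)=h\bigl(y^{-1}\exp(t(F(x)-G(y))X)x\bigr)=c(x,y)\quad\text{on }\supp\gamma.
\]
Combining with admissibility $\phi(u)+\psi(v)\le c(u,v)$ and the equality $\phi(x)+\psi(y)=c(x,y)$ on $\supp\gamma$, integration against $\gamma$ gives
\[
\mathcal I(t):=\int[\phi(\exp(tF(x)X)x)-\phi(x)]\,d\mu+\int[\psi(\exp(tG(y)X)y)-\psi(y)]\,d\nu\le 0\quad\forall\,t\in\R.
\]
Applied with $F=G\equiv 1$, the same reasoning shows that $t\mapsto\phi(\exp(tX)x)+\psi(\exp(tX)y)$ attains a maximum at $t=0$ for $\gamma$-a.e.\ $(x,y)$, whence $X\phi(x)+X\psi(y)=0$ pointwise $\gamma$-a.e.

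The crucial choice is $F(x)=\sqrt{f''(X\phi(x))}$ and $G(y)=\sqrt{f''(X\psi(y))}$: since $f''$ is even and $X\phi(x)=-X\psi(y)$ on $\supp\gamma$, the compatibility $F(x)=G(y)$ there holds. Taylor-expanding $\mathcal I(t)$ at $t=0$, the linear coefficient is $\int[F(x)X\phi(x)+G(y)X\psi(y)]\,d\gamma=\int F(x)[X\phi(x)+X\psi(y)]\,d\gamma=0$, and the quadratic coefficient must be nonpositive, giving
\[
\int f''(X\phi)X^2\phi\,d\mu+\int f''(X\psi)X^2\psi\,d\nu\le 0.
\]
Recognizing $f''(X\phi)X^2\phi=X(f'(X\phi))$ and integrating by parts using the divergence-freeness of $X$ (for the Haar measure under consideration), this rewrites as \eqref{eqn:5gif}. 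The main technical obstacle is that $\phi,\psi$ are only $L$-Lipschitz, so $X^2\phi,X^2\psi$ exist only distributionally and the Taylor expansion is formal; rigorously one keeps $\mathcal I(t)$ in the integral form $\int\int_0^{tF(x)}[X\phi(\exp(sX)x)-X\phi(x)]\,ds\,d\mu+\cdots$ and performs the IBP against $\mu,\nu\in W^{1,1}_H$ before letting $t\to 0$, yielding \eqref{eqn:5gif} directly as an $L^\infty\cdot L^1$ inequality.
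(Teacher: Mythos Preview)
Your reduction of \eqref{eqn:5giG} to \eqref{eqn:5gif} by integrating over $S^{m-1}$ is exactly what the paper does, and your pointwise relation $X\phi(x)+X\psi(y)=0$ on $\supp\gamma$ is correct. The real divergence is in the proof of \eqref{eqn:5gif}, and here there is a genuine gap.

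Your argument hinges on the \emph{variable} translation $x\mapsto \exp(tF(x)X)x$ with $F=\sqrt{f''(X\phi)}$, and on extracting the quadratic term of $\mathcal I(t)$ at $t=0$. Formally this yields $\int f''(X\phi)\,X^2\phi\,d\mu+\cdots\le 0$, but $\phi$ is only Lipschitz, so $X^2\phi$ does not exist pointwise. Your proposed fix---writing $\mathcal I(t)=\int\!\int_0^{tF(x)}[X\phi(g_sx)-X\phi(x)]\,ds\,d\mu+\cdots$ and ``integrating by parts before $t\to0$''---does not go through as stated: the upper limit $tF(x)$ depends on $x$ through the merely bounded measurable function $X\phi$, so neither a change of variables $x\mapsto g_sx$ (the domain $\{tF(x)>s\}$ moves with $s$ in an uncontrolled way) nor an integration by parts in $x$ (the boundary term produces $\nabla F$, which is not known to exist) is available. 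In short, the point-dependent translation forces the second-order information onto $\phi$, and there is no mechanism here to transfer it to $\mu$.

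The paper avoids precisely this obstruction by using a \emph{constant} translation $\phi_t(x)=\phi(g_t\cdot x)$ and inserting the nonlinearity at the level of the \emph{competitor}: one sets $\tilde\phi_t=\phi+t\,f'\!\big(\tfrac{\phi_t-\phi}{t}\big)$, checks that $\tilde\phi_t+\tilde\psi_t\le c$ on $\supp\gamma$ via the monotonicity and oddness of $f'$, and then exploits that $f'\!\big(\tfrac{\phi_{-t}-\phi}{t}\big)(x)=-f'\!\big(\tfrac{\phi_t-\phi}{t}\big)(g_t^{-1}x)$ together with the Haar invariance of $dx$ under the \emph{fixed} left shift $x\mapsto g_t^{-1}x$. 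After this change of variables one obtains $\int f'\!\big(\tfrac{\phi_t-\phi}{t}\big)\tfrac{\mu-\mu_t}{t}\,dx+\cdots\le 0$, and the limit $t\to0$ uses only that $\tfrac{\phi_t-\phi}{t}\to X\phi$ boundedly and $\tfrac{\mu_t-\mu}{t}\to X\mu$ in $L^1$. The second derivative thus lands on $\mu\in W^{1,1}_H$, never on $\phi$. If you want to salvage your approach, you would need an analogous device that keeps the translation rigid while still producing the factor $f'(X\phi)$; the paper's choice of $\tilde\phi_t$ is exactly such a device.
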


 Notice that in particular \eqref{eqn:5giG} holds for the notable cases $G(x)=\|x\|^p$ for every $p > 1$: it is sufficient to choose $\sigma$ the uniform measure on $S^{m-1}$ and $f_v(t)=C_p|t|^p$ for some constant $C_p>0$. Notice that $G$ plays the same role of the function $\ell$ in Theorem~\ref{thm:main}, but thanks to the richer geometry allowed by the Lie structure, we can drop the isotropy assumption. Indeed in this setting we can even prove a five gradients inequality for directional derivatives (see \eqref{eqn:5gif}).

As a consequence of Theorem~\ref{thm:5gi Lie Group}, it is clear that in Theorem~\ref{thm:main} assuming the Riemannian manifold to be compact is not a  necessary assumption. On the contrary, having Ricci curvature bounded from below seems to be a necessary condition. The class of manifold that `behave' similarly to compact manifolds is the so-called \textit{Riemannian manifold of bounded geometry}. Let us recall that a Riemannian
manifold $(\M,g)$ is of bounded geometry if $g$ is complete and the curvature
tensor and all its covariant derivatives are bounded. This leads to the following conjecture.
\begin{conj}
A complete Riemannian manifold is of bounded geometry if and only if the five gradients inequality holds.
\end{conj}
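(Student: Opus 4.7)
Since this is stated as a conjecture rather than a theorem, I can only sketch a plan of attack, and the statement naturally splits into two implications of very different difficulty.

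For the direction \emph{bounded geometry} $\Longrightarrow$ \emph{five gradients inequality}, I would adapt the proof of Theorem~\ref{thm:main}, replacing the compactness hypothesis by the uniform geometric controls provided by bounded geometry. Concretely, I would exhaust $\M$ by a nested sequence of compact domains $\Omega_n$, truncate $\mu$ and $\nu$ so that their truncations are supported in $\Omega_n$, run the argument of Theorem~\ref{thm:main} on each $\Omega_n$ (which is licit since bounded geometry gives a global Ricci lower bound), and pass to the limit $n \to \infty$. Beyond the Ricci bound, bounded geometry would be used to guarantee uniform Lipschitz estimates for the Kantorovich potentials (via a positive lower bound on the injectivity radius and uniform volume comparison), uniform integrability to enable dominated convergence, and stability of optimal plans under the truncation. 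This is essentially a ``non-compact extension'' in the spirit of the remark following Theorem~\ref{thm:main} about compactly supported marginals, and should be tractable with the tools already developed here together with standard techniques from the theory of manifolds of bounded geometry.

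For the converse direction, \emph{five gradients inequality} $\Longrightarrow$ \emph{bounded geometry}, I would first recover the Ricci lower bound along the lines of Remark~\ref{rmk:contraction}: specializing~\eqref{eqn:5GIW11} to $\ell(t) = h(t) = t^2/2$ yields $K$-contractivity of $W_2$ along the heat flow, which by the Sturm--von~Renesse theorem is equivalent to $\Ric \geq K$. To access the higher covariant derivatives of the curvature (also part of the bounded geometry hypothesis), I would localize test measures $\mu, \nu$ near a generic point $p \in \M$ at a small scale $\varepsilon$, expand both sides of~\eqref{eqn:5GIW11} in normal coordinates in powers of $\varepsilon$, and try to match subleading coefficients with quantities involving $\nabla^k \Riem$; independently varying the anisotropy of the marginals should allow one to disentangle the different tensorial contributions.

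The main obstacle is clearly the converse direction. The five gradients inequality is an integrated statement well-suited to extracting Ricci-type bounds via heat-flow contraction, but it is far from obvious that pointwise bounds on \emph{all} covariant derivatives of the Riemann tensor can be read off from such an averaged inequality. I suspect that the correct equivalent formulation may in fact require a weaker geometric condition than full bounded geometry, for instance a uniform Ricci lower bound together with a positive lower bound on the injectivity radius (already implicit in the comparison estimates used in the compact proof). Pinning down the precise geometric notion that is captured by~\eqref{eqn:5GIW11} is, in my view, the first step toward resolving the conjecture.
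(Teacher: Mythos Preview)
This statement is a \emph{conjecture} in the paper, not a theorem: the paper offers no proof, only the heuristic observation (immediately following the conjecture) that, via Remark~\ref{rmk:contraction} and the Sturm--von~Renesse theorem, the equivalence holds on compact manifolds. Your proposal is therefore not to be judged against any argument in the paper, and you were right to present a programme rather than a proof.

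Your outline is sensible and in fact mirrors the paper's own heuristics. For the forward implication you invoke exactly the mechanism the authors hint at after Theorem~\ref{thm:main} (replace compactness by compactly supported marginals and use the uniform curvature/injectivity bounds furnished by bounded geometry). For the converse you reproduce the paper's one piece of evidence: specialise to $\ell=h=t^2/2$, obtain $W_2$-contractivity of the heat flow, and apply Sturm--von~Renesse to recover the Ricci lower bound. Your honest acknowledgement that extracting bounds on all covariant derivatives of the curvature from an integrated inequality is the real obstruction---and your suspicion that a weaker geometric condition may be the true equivalent---is well placed; the paper itself offers no route past this point, and indeed immediately proposes a second conjecture (the $RCD(K,\infty)$ version) which suggests the authors, too, regard the Ricci lower bound as the essential content.
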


Notice that due to Remark~\ref{rmk:contraction}, thanks to the results of the seminal paper by Sturm and von Renesse \cite{stvr}, we have that the double implication holds at least for compact Riemannian manifolds. However the equivalence between the contraction in the Wasserstein distance and Ricci curvature goes much further: Savaré proved in \cite[Theorem 4.1]{Savare}  that contraction of the Wasserstein distance along the heat flow implies Ricci curvature bounded from below in the abstract setting of infinitesimally Hilbertian metric spaces with synthetic Ricci curvature bounded from below, introduced in \cite{gigli2015}. We thus believe also the following to be true.
\begin{conj}
Let $(X,d, \mu)$ be an infinitesimally Hilbertian Polish metric space. Then $(X,d,\mu)$ is $RCD(K, \infty)$ if and only if the five gradients inequality holds.
\end{conj}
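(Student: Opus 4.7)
The plan is to address the two implications separately, leveraging the interplay between the five gradients inequality, the contraction of the heat flow spelled out in Remark~\ref{rmk:contraction}, and Savaré's characterization of RCD$(K,\infty)$ in \cite{Savare}.

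The easy direction, 5GI $\Rightarrow$ RCD$(K,\infty)$, would essentially be an abstract version of Remark~\ref{rmk:contraction}. Specialize the inequality to $h(t)=\ell(t)=t^{2}/2$, take regular initial data $\mu_{0},\nu_{0}$, evolve them by the heat semigroup (which in the infinitesimally Hilbertian setting comes from the Cheeger energy), and apply the first variation formula for $W_{2}^{2}$ together with Grönwall exactly as in the remark. This yields $W_{2}(\mu_{t},\nu_{t})\le e^{-Kt}W_{2}(\mu_{0},\nu_{0})$, which by \cite[Theorem~4.1]{Savare} is equivalent to RCD$(K,\infty)$. One only has to check that the five gradients inequality is being assumed on a class of test measures dense enough for this contraction argument to pass to arbitrary initial data; this is standard, since heat-flow regularization provides Sobolev-regular approximants inside any RCD structure.

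The hard direction, RCD$(K,\infty)$ $\Rightarrow$ 5GI, should be attacked by transplanting the Riemannian proof of Theorem~\ref{thm:main} into the first-order differential calculus developed by Ambrosio--Gigli--Savaré on RCD spaces. The ingredients one wants to reuse are: the pointwise inner product induced by the Dirichlet form (which exists precisely because of infinitesimal Hilbertianity), the existence of Lipschitz $c$-concave Kantorovich potentials $\phi,\psi$ for the cost $c(x,y)=h(d(x,y))$ and the identification $|\nabla\phi|=h'(d(\cdot,T(\cdot)))$ on the support of the optimal plan, and a Bochner-type / Bakry--Émery estimate encoding the lower bound $K$ on Ricci. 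The Riemannian argument combines these with a convexity/flow manipulation of $\phi,\psi$; an alternative, perhaps easier to implement abstractly, is to mimic the dual-variational strategy of Theorem~\ref{thm:5gi Lie Group}, testing optimality of $(\phi,\psi)$ against a perturbation transported by the gradient flow of $\ell'(\nabla\phi)$ and of $\ell'(\nabla\psi)$, and integrating by parts via the divergence identity in the Gigli calculus.

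The main obstacle will be the lack of pointwise regularity of the optimal map in the abstract setting: RCD does not provide a classical Monge--Ampère equation, and the Riemannian proof does make delicate use of gradients and second-order information along the geodesic interpolation. The natural workaround is a double approximation: regularize $\mu,\nu$ by the heat semigroup to gain Sobolev regularity, and approximate the RCD space itself by smooth weighted Riemannian manifolds of Ricci $\ge K$ through measured Gromov--Hausdorff limits (using the stability of RCD). One then applies Theorem~\ref{thm:main} on the approximants and passes to the limit in \eqref{eqn:5GIW11}. The delicate point, and the reason the statement has to be phrased as a conjecture, is that the integrand $\ell'(\nabla\phi)\cdot\nabla\mu$ is a genuinely nonlinear functional of the gradients of the Kantorovich potentials, and controlling its behavior under weak convergence of potentials and mGH-convergence of the ambient space is precisely where the present tools fall short.
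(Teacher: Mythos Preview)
The statement you are addressing is a \emph{conjecture} in the paper, not a theorem: the authors introduce it with ``We thus believe also the following to be true'' and offer no proof. There is therefore no ``paper's own proof'' to compare your proposal against.

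Your outline for the implication 5GI $\Rightarrow$ RCD$(K,\infty)$ is exactly the heuristic the paper uses to \emph{motivate} the conjecture: the Gr\"onwall computation of Remark~\ref{rmk:contraction} combined with Savar\'e's characterization \cite{Savare}. Be aware, though, that even this direction is not a formality in the bare infinitesimally Hilbertian setting. The first variation formula for $W_2^2$, the $W^{1,1}$-regularity of heat-evolved densities, and the very applicability of \cite[Theorem~4.1]{Savare} all require structure beyond infinitesimal Hilbertianity alone (Savar\'e's self-improvement result already assumes an RCD framework). You flag a density issue, but the circularity is deeper: several of the tools you invoke to deduce RCD are currently only available once RCD is assumed.

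For the converse direction you correctly identify the obstruction---passing the nonlinear integrand $\ell'(\nabla\phi)\cdot\nabla\mu$ to the limit under mGH-approximation, or reproducing the second-order argument of Section~\ref{sec:simultaneous} with only the first-order Gigli calculus---and you yourself conclude that this is ``the reason the statement has to be phrased as a conjecture.'' That is consistent with the paper's stance. Your write-up is thus not a proof but a faithful summary of why the problem is open, together with a plausible line of attack; it should not be presented as a proof proposal.
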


\subsection{Sketch of the proof in $\R^n$}\label{ss:rn}

We present here the idea at the core of the proof of the ``five gradients inequality'' in the settings that we consider in the present work. Differently from the explicit approach of \cite{Caillet, 5gi}, we deduce the inequality as a consequence of the optimality of the Kantorovich potentials in the dual formulation of the optimal transport problem for a Lipschitz cost function: moreover this approach does not require any a priori smoothness for $\mu, \nu$ or the potentials.

For simplicity, we explain our strategy directly in $\R^n$. Let then $\mu,\nu \in W^{1,1}(\R^n)$ be compactly supported probability measures on $\R^n$, $\phi,\psi$ be the Lipschitz Kantorovich potentials for the classical quadratic Wasserstein distance between $\mu$ and $\nu$ and $T: \R^n \to \R^n$ the optimal transport map. Then the couple $(\phi,\psi)$ solves 
\[   \int_{\R^n} \phi d\mu + \int_{\R^n} \psi d\nu = \sup_{(\tilde{\phi}, \tilde{\psi})} \left\lbrace   \int_{\R^n} \tilde{\phi} d\mu + \int_{\R^n} \tilde{\psi} d\nu \; : \;\, \tilde{\phi}(x) + \tilde{\psi}(y) \leq \frac{|x-y|^2}{2} \right \rbrace,    \]
moreover
\[ \phi(x) + \psi(T(x)) = \frac{|x-T(x)|^2}{2}  \quad \text{and} \quad \nabla \phi(x) = x - T(x) \;\;\; \mu\text{-a.e.}   \]

Let us first describe the linear case, namely when $\nabla G$ in \eqref{eqn:5giG} is the identity map and the ``five gradients inequality'' actually becomes a ``four gradients inequality''. 
Observe, that the couple $(\phi_h,\psi_h)$ defined as
\[ \phi_h(x) = \phi(x + h e) , \qquad \psi_h(x) = \psi(x + h e)   \]
for some unit vector $e \in \mathbb{S}^{n-1}$ and $h >0$, is a good competitor for the dual problem because we still have $\phi_h(x) + \psi_h(y) \leq |x-y|^2/2$. Moreover, if we set 
\[ \mathcal{E}(h) : = \int_{\R^n} \phi_h d\mu + \int_{\R^n} \psi_h d\nu,    \] 
then from the optimality of the Kantorovich potentials we deduce that $\mathcal{E}$ has a maximum point in $h=0$ thus 
\[
\mathcal{E}(h) + \mathcal{E}(-h) - 2\mathcal{E}(0) \leq 0.
\]
The integral on $\R^n$  being translation invariant, the previous inequality implies that
\[ - \int_{\R^n} (\phi_h - \phi) d(\mu_h - \mu) - \int_{\R^n} (\psi_h - \psi) d(\nu_h - \nu) \leq 0,  \]
then dividing by $h$, taking the limit as $h \searrow 0$ and summing up over an orthonormal basis of $\R^n$ one obtains 
\[ - \int_{\R^n} \nabla \phi \cdot \nabla \mu dx -  \int_{\R^n} \nabla \psi \cdot \nabla \nu dx \leq 0. \]

Let us now consider a convex function $G : \R^n \to \R$ as in Theorem~\ref{thm:5gi Lie Group}. Let $e \in \mathbb{S}^{n-1}$ be fixed, then the couple $(\tilde{\phi}_h, \tilde{\psi}_h)$ defined by 
\[ \tilde{\phi}_h(x) = \phi(x) + h f'_e\left(\frac{\phi_h(x) - \phi(x)}{h} \right), \qquad \tilde{\psi}_h(x) = \psi(x) + h  f'_e\left(\frac{\psi_h(x) - \psi(x)}{h} \right)   \]
does not satisfy anymore the constraint that $\tilde{\phi}_h(x) + \tilde{\psi}_h(y) \leq |x-y|^2/2$ for every $x,y \in \R^n$. However the crucial observation is that this still holds in the support of any optimal plan $\gamma$. In particular
\[  \int_{\R^n} \tilde{\phi}_{\pm h} d\mu + \int_{\R^n} \tilde{\psi}_{\pm h} d\nu = \int_{\R^n \times \R^n} \big(\tilde{\phi}_{\pm h}(x) + \tilde{\psi}_{\pm h}(y) \big) d\gamma  \leq  \int_{\R^n} \phi d\mu + \int_{\R^n} \psi d\nu.  \]
Similar computations as in the linear case lead to
\[ - h \int_{\R^n} (\tilde{\phi}_{h} - \phi) d(\mu_h - \mu) -  h \int_{\R^n} (\tilde{\psi}_{h} - \psi) d(\nu_h - \nu) \leq 0   \]
and hence to 
\[ - \int_{\R^n} f'_e(\nabla \phi \cdot e) \nabla \mu \cdot e \,dx - \int_{\R^n} f'_e(\nabla \psi \cdot e) \nabla \nu \cdot e \,dx  \leq 0 \]
for the fixed direction $e$ and then a final integration over $e \in \mathbb{S}^{n-1}$ provides
\[   -\int_{\R^n} \nabla G(\nabla \phi) \cdot\nabla \mu \,dx - \int_{\R^n} \nabla G(\nabla \psi) \cdot \nabla \nu \, dx \leq 0.  \]

In the present work we apply the above strategy in the context of locally compact Lie groups and compact Riemannian manifolds. The first setting does not present substantial differences with respect to the Euclidean case $\R^n$, as it is possible to find a point-independent orthonormal basis of the space and the Lebesgue measure is invariant under translation. The Riemannian case is instead more complicated and one has to argue locally. For the linear case, in essence in the above sketch we used that if $\phi(x)+\psi(y) \leq |x-y|^2/2$ then we have
$$ \Delta \phi (x) + \Delta \psi(y) \leq 0 \qquad \text{ on the contact set }\phi(x)+\psi(y) = |x-y|^2;$$
this inequality is obtained by using a simultaneous parallel variation in $x$ and $y$, which do not alter the distance between them, and then average in the directions.

In the Riemannian setting in order to do a simultaneous variation in $x$ and $y$, we perform a flow along the geodesic connecting them, similar to the one generated by Fermi coordinates in \cite[Section $3$]{Andrews}; now $d(x_t,y_t)^2$  is not constant anymore and we will see in Section~\ref{sec:simultaneous} that in its second variation, the Ricci curvature naturally appears. Notice that in \cite{Andrews} sharper estimates are obtained for $ \frac {d^2}{dt^2} d(x_t,y_t)^2$: we expect, as a consequence, that some finer estimates could be obtained also in our case, but we did not pursue them for the sake of simplicity (i.e. linear dependence in $K$ of the right-hand side).

\subsection{Applications} \label{applications}

We conclude this introduction commenting on the $BV$ regularity of certain variational problems in Optimal Transportation on Riemannian manifolds. Indeed, a direct application of our main Theorem \ref{thm:main} provides explicit estimates for the Wasserstein projection of measures with $BV$ densities over the set 
$\{ \rho \in \Prob(\M) : \rho \ll \vol \text{ and }   \frac{d \rho}{d \vol}   \leq f \} $
in terms of the $BV$ norm of $f$. This problem is of particular interest for the choice $f=1$ as it corresponds to the case of projection on densities with $L^\infty$ bound, thus having a natural application in the context of evolutionary PDEs describing crowd motion.     
The Euclidean version of this result has been proved in  \cite[Theorem 1.1]{5gi} for $f=1$ and in  \cite[Theorem 1.2]{5gi} for generic $f$ with mass bigger than $1$, the key ingredient of both proofs being the \emph{`five gradients inequality'} in $\R^n$. 
Thanks to McCann polar decomposition theorem on Riemannian manifolds \cite{MC} and straightforward computations in local charts, the same argument provided in \cite{5gi} still works on smooth compact Riemannian manifolds and allows us to apply the Riemannian version of the 
\emph{`five gradients inequality'}, i.e. estimate \eqref{eqn:5GIW11}. 

More precisely, we obtain the following.

\begin{thm}\label{thm:BVestimates}
Let $(\M,g)$, $K$, $d$, $h$ be as in Theorem \ref{thm:main} and $\ell(t)=t$. Let then $\eta : \R_+ \to \R \cup \{+\infty\}$ be a convex and l.s.c. function and $\nu \in \Prob
(\M) \cap BV(\M)$ be a given measure. If $\bar \mu$ is a solution of 
\[ \min_{\mu\, \in\, \Prob (\M)} \; \Ch (\mu,\nu) + \int_\M \eta(\mu) \vol  \]
then
\begin{equation}\label{eqn:BVcontraction}
\int_\M |\nabla \bar \mu| \vol + K \int_{\M \times \M} d(x,y) d \gamma  \leq \int_\M |\nabla \nu| \vol. 
\end{equation}
where $\gamma$ is an optimal plan between $\bar{\mu}$ and $\nu$ for the cost $c = h \circ d$. 
Moreover, if $f \in BV(\M)$ is a non negative function with mass $\int_\M f \vol \geq 1$ and
\[ \bar \mu \in {\rm argmin} \{ \Ch (\mu,\nu) : \mu \in \Prob(\M),\, \mu \leq f \; \text{$\vol$-a.e.} \}, \]
then 
\begin{equation}\label{eqn:BVprojection}
\int_\M |\nabla \bar \mu| \vol + K \int_{\M \times \M} d(x,y) d \gamma  \leq \int_\M |\nabla \nu| \vol + 2 \int_\M |\nabla f| \vol
\end{equation}
where, again, $\gamma$ is an optimal plan between $\bar{\mu}$ and $\nu$ for $c= h \circ d$.
\end{thm}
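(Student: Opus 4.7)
The plan is to apply Theorem~\ref{thm:main} with the choice $\ell(t)=t$: then $\ell'(v) = v/\|v\|$ on $T\M\setminus\{0\}$ (with the convention $\ell'(0)=0$) and $\ell'(h'(d(x,y)))=1$, so \eqref{eqn:5GIW11} reduces to
\[
\int_\M \frac{\nabla\phi}{|\nabla\phi|}\cdot \nabla\bar\mu\,\vol + \int_\M \frac{\nabla\psi}{|\nabla\psi|}\cdot\nabla\nu\,\vol \;\ge\; K\int_{\M\times\M} d(x,y)\,d\gamma.
\]
In both parts, the $\psi$-integral is bounded above by $\int_\M |\nabla\nu|\,\vol$ via $|\ell'|\le 1$, and the $\phi$-integral is controlled from below using the first-order optimality conditions of the corresponding variational problem. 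Existence of the minimizer $\bar\mu$ is standard by the direct method, and its $BV$ regularity can be secured by perturbing the problem with a small strictly convex term that enforces $W^{1,1}$ regularity, then passing to the limit via the lower semicontinuity of total variation.

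For \eqref{eqn:BVcontraction}, Kantorovich duality for the unconstrained minimization yields the first-order condition $\phi + \eta'(\bar\mu) \equiv C$ on $\{\bar\mu > 0\}$. Differentiating gives $\nabla\phi = -\eta''(\bar\mu)\,\nabla\bar\mu$ almost everywhere; convexity of $\eta$ forces $\eta''(\bar\mu)\ge 0$, so $\nabla\phi$ is antiparallel to $\nabla\bar\mu$ and
\[
\frac{\nabla\phi}{|\nabla\phi|}\cdot\nabla\bar\mu = -|\nabla\bar\mu|\qquad\vol\text{-a.e.\ on }\{\nabla\bar\mu\ne 0\}.
\]
Combined with $\frac{\nabla\psi}{|\nabla\psi|}\cdot\nabla\nu \le |\nabla\nu|$, plugging these bounds into the five gradients inequality yields \eqref{eqn:BVcontraction} after rearrangement.

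For \eqref{eqn:BVprojection}, the KKT conditions at the constrained minimizer provide a constant $\lambda\in\R$ with $\phi\equiv -\lambda$ on the unsaturated set $N=\{\bar\mu<f\}$ and $\phi\le -\lambda$ on the saturated set $S=\{\bar\mu=f\}$. In particular $\nabla\phi = 0$ almost everywhere on $N$, and the outward normal derivative of $\phi$ at $\partial S$ from the $S$-side is non-positive. Since $\bar\mu=f$ on $S$, the coincidence lemma gives $\nabla\bar\mu = \nabla f$ a.e.\ on $S^\circ$, while $\bar\mu$ carries a non-negative jump $f-\bar\mu^-\ge 0$ across $\partial S$. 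The plan is to decompose the distributional derivative $\nabla\bar\mu$ into its absolutely continuous part on $S^\circ$ (equal to $\nabla f$), its absolutely continuous part on $N$, and the jump part on $\partial S$, and then to evaluate $\int(\nabla\phi/|\nabla\phi|)\cdot d\nabla\bar\mu$ term by term: the $N$-contribution vanishes because $\nabla\phi=0$ there, the $S^\circ$-contribution is controlled by $\int_\M|\nabla f|\,\vol$ via $|\ell'|\le 1$, and the jump contribution is bounded by an additional $\int_\M|\nabla f|\,\vol$ using the sign of the normal derivative of $\phi$ together with the $BV$-trace inequality for $f$. Inserting these estimates into \eqref{eqn:5GIW11} and using the identity $|\nabla\bar\mu|(\M) = \int_{S^\circ}|\nabla f|\,\vol + \int_N|\nabla\bar\mu|\,\vol + \int_{\partial S}(f-\bar\mu^-)\,d\mathcal{H}^{n-1}$ produces \eqref{eqn:BVprojection}, with the factor $2\int_\M|\nabla f|\,\vol$ accounting for the two boundary contributions.

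The main technical obstacle is the $BV$ bookkeeping in the projection case: identifying the jump of $\nabla\bar\mu$ on $\partial S$, exploiting the sign of the normal derivative of $\phi$, and justifying the coincidence lemma $\nabla\bar\mu=\nabla f$ on $S^\circ$ at the level of merely $BV$ functions. Compactness of $(\M,g)$ reduces all these manipulations to the Euclidean setting through a finite system of geodesic charts in which the metric tensor is uniformly bi-Lipschitz equivalent to the Euclidean one, so the detailed Euclidean argument of~\cite[Thm 1.2]{5gi} transfers without essential change. McCann's polar decomposition~\cite{MC} supplies the optimal transport map needed to phrase Kantorovich duality on the manifold, while the Ricci-curvature correction $K\int d\,d\gamma$ is already built into Theorem~\ref{thm:main} and is carried through automatically.
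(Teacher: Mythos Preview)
Your proposal is correct and takes essentially the same approach as the paper: the paper does not give a detailed proof of this theorem but simply observes that, thanks to McCann's polar decomposition on manifolds and computations in local charts, the Euclidean argument of \cite{5gi} carries over verbatim once the five gradients inequality is replaced by its Riemannian version (Theorem~\ref{thm:main}) with $\ell(t)=t$. Your sketch of the Euler--Lagrange/KKT analysis, the antiparallelism of $\nabla\phi$ and $\nabla\bar\mu$ in the unconstrained case, the saturated/unsaturated decomposition in the projection case, and the approximation step to pass from $W^{1,1}$ to $BV$ are exactly the ingredients of the \cite{5gi} proof that the paper is invoking.
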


Estimates \eqref{eqn:BVcontraction} and \eqref{eqn:BVprojection} then have some important consequences depending on the sign of the Ricci curvature. Indeed, recalling that $\gamma$ is an optimal plan for the cost $c=h \circ d$ with marginals $\bar \mu$ and $\nu$, one has that the two estimates provide an explicit control of the change of the BV norm in terms of either the $1$-Wasserstein distance  or (via Jensen inequality), a suitable homogenized version of the optimal cost $\Ch$  between the two measures $\bar \mu$ and $\nu$, depending on the sign of $K$. Indeed, one has 
\[   -  K \int_{\M \times \M} d(x,y) d \gamma \leq \begin{cases}  -K W_1(\bar{\mu},\nu)  \qquad & \text{ if }K>0 \\  -K \cdot h^{-1}  \bigl( \Ch (\bar{\mu},\nu) \bigr) & \text{ if }K<0. \end{cases} \]
Notice that in the application of Theorem~\ref{thm:BVestimates} to the JKO scheme one has $h(t)=t^2$ and so this last estimate, for $K<0$ reduces to $-K W_2(\bar{\mu},\nu)$.

\subsection{Possible generalizations}\label{ss:general}

We want also to comment on five gradient inequalities holding for other distance-like functions. Notice that in fact in Subsection~\ref{ss:rn} the relevant facts of the squared Wasserstein distance that we use are
\begin{itemize}
\item there exists a dual formulation, which depends \emph{linearly} on the dual variables;
\item the admissible set of dual potential is \emph{invariant under translation}.
\item the optimal dual potentials are differentiable almost everywhere.
\end{itemize}
This of course let us immediately realize that the sketch we gave for $|x-y|^2$ works also with $\ell(x-y)$, where $\ell$ is any positive Lipschitz cost (this is what we do in Theorem \ref{thm:5gi Lie Group} in the case $\G=\R^n$). However we can change also the type of functional. Notable examples are:

\begin{itemize}

\item  Entropic optimal transport problem, for a Lipschitz cost $h$:
\begin{align*}
EOT_{\eps}(\mu,\nu) &= \inf_{ \pi \in \Pi(\mu,\nu)} \left \{ \int_{\R^{2d}} h(x-y) \, d \pi + \eps Ent ( \pi | \mu \otimes \nu)  \right\} \\
& = \sup_{\phi,\psi } \left\{ \int \phi \, d \mu + \int \psi \, d \nu \; : \; \int e^{\frac{\phi(x)+\psi(y)-h(x-y)}{\eps} } \, d \mu \otimes  \nu =1 \right\}.
\end{align*}
Assuming that $G$ is $\lambda$-convex we expect the following inequality to hold
$$ \int \nabla G (\nabla \phi)  \cdot \nabla \mu +  \nabla G (\nabla \psi) \cdot \nabla \nu \,dx \geq  \lambda \eps\left( \int \frac { |\nabla \mu|^2}{\mu} + \int \frac { |\nabla \nu|^2}{\nu} \right);$$
notice that the constraint in the dual is not translation invariant: in fact it is useful in this case to pass to the potentials $u(x)=\phi(x) +\eps \ln(\mu (x))$ and $v(x)=\psi(y) + \eps \ln(\nu(y))$, for which we can obtain the \emph{five gradients inequality} and then use the convexity of $G$ to come back to $\phi$ and $\psi$.
\item Hellinger-Kantorovich distance:
\begin{align*}
HK(\mu, \nu)^2 = \sup_{u,v} \Big\{ \int u \, d \mu  + &\int v \, d \nu \; : u(x)<1, v(y)<1,  \\ & . (1-u(x))(1-v(y))\geq \cos^2(\min\{|y-x|,\pi/2\}) \Big\}.
\end{align*}
Then, the following inequality is expected to hold for every convex function $G$:
$$ \int \nabla G (\nabla \phi)  \cdot \nabla \mu +  \nabla G (\nabla \psi) \cdot \nabla \nu \,dx \geq  0$$  
\end{itemize}

\paragraph{Structure of the paper}
In Section 2 we discuss the setting of Lie groups and prove Theorem \ref{thm:5gi Lie Group}; Section 3 encompasses some known results about the variation of the arc-length and optimal transport theory on Riemannian manifolds which will be exploited in Section 4 to show that suitable second variations of the arc-length are bounded from above by the Ricci curvature. In the first part of Section 5 we make use of the construction built in Section 4 to recover a weighted pointwise four gradients inequality for smooth enough initial measures, which eventually implies the nonlinear five gradients inequality. Finally, the second part of Section 5 is devoted to the proof of Theorem \ref{thm:main}.

\paragraph{Acknowledgements}
This work started while S.D.M. was a postdoc in Orsay under the funding of ANR project ISOTACE: he would like to thank Filippo Santambrogio, Young-Heon Kim and François-Xavier Vialard for useful suggestions and for sparking the interest in the geometric aspect of the \emph{five gradients inequality}.
We would like to thank also Guido De Philippis and Alessio Figalli for helpful discussions related to the topic of this paper. We are grateful to Nicola Gigli for technical discussions and suggestions about this paper.
We are grateful to the referee for useful comments on the
manuscript.
 This work was written within the activities of GNFM and GNAMPA groups of INdAM.

\paragraph{Fundings}
 S.M. acknowledges the support of the GNFM-INdAM and 
the INFN national project “Bell”. E.R. acknowledges the support of the national INdAM project n.E53C22001930001 ``MMEAN-FIELDSS''.  S. D. M. is a member of GNAMPA (INdAM) and acknowledges the support of the AFOSR project FA8655-22-1-7034 and PRIN project no. 202244A7YL "Gradient Flows and Non-Smooth Geometric Structures with Applications to Optimization and Machine Learning". S.D.M. and S.M. were also supported in part by MIUR Excellence Department Project 2023-2027 awarded to the Dipartimento di Matematica of the University of Genova, CUP D33C23001110001.

\paragraph{Notation and conventions in Riemannian geometry}
\begin{itemize}
\item[-] $\M:=(\M,g)$ denotes a compact, connected, orientable, smooth Riemannian manifold.
\item[-] $\Gamma(E)$ denotes space of smooth sections of a principal bundle $E$ over $\M$.
\item[-]  $T\M$ and $F_O\M$ denote respectively the tangent and the orthonormal tangent frame bundle.
 \item[-] We denote the Levi-Civita connection with $\nabla$ and the Christoffel symbols with $\Gamma_{ij}^k$.
 \item[-] $\Pi_{b} V(a)$ denotes the parallel transport of the element $V(a)$ of the vector filed $V$ along the geodesic connecting $a$ and $b$.
 \item[-] We denote the \textit{Riemann curvature tensor} by $R:T_p\M\times T_p\M\times T_p\M\to T_p\M$ and we recall that,
  for any $v,w,z\in T_p\M$ it holds
 $$R(v,w)z:= [\nabla_v,\nabla_w] z - \nabla_{[v,w]} z \,.$$
 \item[-] We denote the \textit{Ricci curvature} by $\ric:T_p\M\times T_p\M\to \R$ and the \textit{Ricci tensor} by $\Ric_p:T_p\M\to T_p\M$ and we recall that, for any $p\in\M$ and for any $v,w\in T_p\M$ it holds
 $$\ric(v,w) =g(\Ric_p(v),w) = \sum_i g(R(v,e_i)e_i,w)\,.$$
 \item[-] We denote the scalar curvature by $\scal (p): \M\to\R$ and we recall that, for any $p\in\M$ and for any orthonormal basis $\{e_i\}\subset T_p\M$, it holds
 $$\scal(p)=\sum_i  \ric(e_i,e_i)= 	\sum_i g(\Ric_p(e_i),e_i)\,.$$
\end{itemize}

\label{sec:grad ineq}
\subsection{Setup} \label{setup}
Throughout this paper, we shall adopt the following setup:
\begin{itemize}
\item[(man)] $(\M,g)$ is a smooth compact Riemannian manifold; 
we denote by $K$ the bound from below of the  Ricci curvature, and by $\tilde{K}$ the constant that uniformly bound all the sectional curvatures;
\item[(meas)] $\mu,\nu$ are probability measures on $\M$ absolutely continuous with respect to the volume form; we will use the same letters, with a slight abuse of notation, to denote their densities with respect to $\vol$ (which denotes the volume form induced by the metric $g$). 
\item[(cost)] the cost function $c: \M\times \M \to \R$ is defined by 
$c(x,y):= h(d(x,y))$, where $d$ denotes the Riemannian distance and $h \in C^1([0,\infty))$ is some nonnegative strictly convex increasing function such that $h(0)=0$. Equivalently $c(x,y)= \int_0^{d(x,y)} \lambda(\tau)d\tau$ for some strictly increasing continuous function $\lambda$.

\item[(cvx)]$\ell \in C^1([0,\infty))$ is an increasing convex function such that $\ell(0)=0$. 

\end{itemize}
Moreover, in order to prove the results in full generality, we will argue approximating with more regular objects, for which we use the following stronger assumptions:

\begin{itemize}

\item[(cost2reg)] the cost function $c$ satisfies (cost) where, additionally, $h(t)=f(t^2)$ for some $f \in C^2([0,+\infty))$.

\item[(cvxreg)]$\ell$ satisfies (cvx) and, additionally, there exists $g \in C^2([0,+\infty))$ such that $\ell(t)=g(t^2)$ and $g \equiv 0$ in a neighborhood of $0$.
\end{itemize}

\section{Five gradients inequality on locally compact Lie groups}
Let  $\G$ be a locally compact connected Lie group of dimension $n$ and denote with $l_g:\G\to\G$ (resp. $r_g:\G\to\G$) the left action (resp. the right action) defined by $l_g(x):=gx$ (resp. $r_g(x)=xg^{-1}$). 
As usual, we denote with $dl_g$ and $dr_g$ the lifts of these actions to the tangent bundle. Let us consider the Lie algebra $\mathfrak{g}$, the set of the right-invariant vector fields $X$ such that for any $g\in\G$ it holds $dr_g\circ X = X \circ r_g$, endowed with the product given by the Lie bracket.

Let now $\{X_i\}_{i=1}^d$ be an orthogonal set of right-invariant vector fields 
and denote with $\Phi^i_t$ be the flow of $X_i$, i.e. the map $\Phi: \R\times\G\to \G$ defined for each $t\in\R$ by sending  $x \in \G$ to the point obtained by following for time $t$ the integral curve starting at $x$ defined by $$\frac d{dt} \Phi^i_t(x)= X_i ( \Phi^i_t(x) )\,.$$
Notice that we allow for $d<n$, which is for example the setting of nilpotent Carnot groups.
Since the vector fields $X_i$ are chosen to be right-invariant, it is easy to see that 
$$\Phi^i_t(x)= \Phi^i_t(1_\G) \cdot x = l_{\Phi^i_t(1_\G) }(x)\,,$$ where $1_\G$ is the unit of $\G$.
By denoting with  $dx$ the right Haar measure of $\G$, we get 
\begin{equation}\label{eq:inv mu}
(\Phi^i_t)_{\sharp} dx = dx 
\end{equation} and by uniqueness of the flow  
$$\Phi^i_t(1_\G)=(\Phi^i_{-t}(1_\G))^{-1}\,.$$
This is because the right Haar measure is the unique (up to a positive multiplicative constant) countably additive, nontrivial measure $d\mu$  on the Borel subsets of $\G$ satisfying the following properties:
(1)
The measure $dx$  is right-translation-invariant; 
(2) 
The measure $dx$  is finite on every compact set;
(3) The measure $dx$  is outer regular on Borel sets $S\subseteq \G$;
(4)  The measure $dx$  is inner regular on open sets $U\subseteq \G$.
\begin{rmk}
Notice that, on a generic (smooth) Riemannian manifold $(\M,g)$, Equation~\eqref{eq:inv mu} can be only satisfied for divergence-free vector fields. However, not every manifold admits an orthogonal set of divergence-free vector fields, see e.g.~\cite{Divfree}.
\end{rmk}

A natural assumption to have on the vector fields $X_1, \ldots, X_d$ is the H\"{o}rmander condition: let $\Delta=\langle X_1, \ldots, X_d \rangle$ the distribution generated by $X_1, \ldots, X_d$ as a subspace of $\mathfrak{g}$, we say $\Delta$ satisfies the H\"{o}rmander condition if it generates the whole $\mathfrak{g}$ as an algebra. In this case, we will say that $(\G, \Delta)$ is a Lie group with its associated right invariant subRiemannian structure $\Delta$.

Then, an important concept is that of Horizontal regularity. We can look for rectifiable \emph{horizontal} curves $\gamma:[0,1] \to \G$, that is, curves such that $\dot{\gamma} \in \Delta=\langle X_1, \ldots, X_d \rangle$, and we accordingly define a distance, called Carnot-Caratheodory distance
$$ d_{CC}(x,y)= \min \left\{ \int_0^1\|\dot{\gamma}(t)\| \, dt \; : \;  \gamma \text{ horizontal curve such that } \gamma(0)=x, \gamma(1)=y \right\}$$
By the left invariance of the construction, it is obvious that, for every $h \in \G$, we have $d_{CC}(x, y) = d_{CC}(hx, hy)= d_{CC} ( y^{-1}x , 1_{\G} )$ and crucially, thanks to the H\"{o}rmander condition we have that $d_{CC}(x,y) < +\infty$, that is every two points can be connected by a finite length horizontal curve. Next, we say that $f: \G \to \R$ is horizontally Lipschitz (or H-Lipschitz) if it is Lipschitz with respect to the distance $d_{CC}$. 

We recall that thanks to Pansu differentiability theorem, every H-Lipschitz function is differentiable $dx$-a.e. along the horizontal directions; we identify by $\nabla_H f$ its gradient.

The transport costs we will be using are left invariant Lipschitz function, that is they are of the following form: $c(x,y)=h(y^{-1}x)$ for some $H$-Lipschitz $h: \G \to [0,\infty)$. In the sequel we will need a lemma about the strong $L^1$ convergence of differential quotients to directional derivative for horizontal Sobolev functions,  which can be defined as in the Euclidean case using the integration by parts formula that holds also in  $\G$: we say that a function $f \in L^1(\G)$ is a Sobolev function if for every $i=1, \ldots, d$ we have that there exists $g_i \in L^1(\G)$ such that 
\begin{equation}\label{eqn:intbypartsG} \int_{\G}  g_i \cdot \psi\, dx = - \int_{\G} f \cdot X_i (\psi) \, dx  \qquad \forall \psi \in C_c^{\infty}(\G).\end{equation}
In this case we say that $f \in W_H^{1,1}(\G)$ and we define $\nabla_H f = ( g_1, \ldots, g_d)$.

 \begin{lemma}\label{lem:convG} Let $f \in W^{1,1}_H(\G)$ and let  $\Phi_t$ be the flow of an horizontal right-invariant vector field $X \in \mathfrak{X}(\G)$. Then we have that
$$ \frac {f(\Phi_t(x))- f(x)}t \to Xf (x) \qquad \text{ in }L^1(\G)$$
\end{lemma}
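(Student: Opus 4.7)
The plan is to reduce to the integral representation
\[
f(\Phi_t(x)) - f(x) = \int_0^t Xf(\Phi_s(x))\, ds,
\]
valid in $L^1(\G)$ for every $f \in W^{1,1}_H(\G)$, and then deduce the claimed convergence from the continuity of left translation on $(\G,dx)$. For smooth $f \in C_c^\infty(\G)$ the identity above is just the fundamental theorem of calculus applied along the integral curve $s \mapsto \Phi_s(x)$. To extend it to arbitrary $f \in W^{1,1}_H(\G)$ I would invoke density of $C_c^\infty(\G)$ in $W^{1,1}_H(\G)$ (obtained by standard convolution with right-invariant mollifiers on $\G$), choose $f_n \in C_c^\infty(\G)$ with $f_n \to f$ and $Xf_n \to Xf$ in $L^1$, and pass to the limit in both sides: for the left-hand side this uses the measure-preserving property $(\Phi_t)_\sharp dx = dx$ from \eqref{eq:inv mu}, and for the right-hand side it uses Fubini together with the same invariance, so that
\[
\Bigl\| \int_0^t X f_n(\Phi_s(\cdot))\, ds - \int_0^t X f(\Phi_s(\cdot))\, ds \Bigr\|_{L^1(\G)} \leq \int_0^t \|Xf_n - Xf\|_{L^1(\G)}\, ds \to 0.
\]

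Once the integral representation is established, I divide by $t$ and estimate
\[
\Bigl\| \tfrac{f(\Phi_t(\cdot)) - f(\cdot)}{t} - Xf \Bigr\|_{L^1(\G)}
= \Bigl\| \tfrac{1}{t} \int_0^t \bigl( Xf(\Phi_s(\cdot)) - Xf(\cdot)\bigr) ds \Bigr\|_{L^1(\G)}
\leq \tfrac{1}{t} \int_0^t \bigl\| Xf(\Phi_s(\cdot)) - Xf(\cdot) \bigr\|_{L^1(\G)}\, ds.
\]
Since $X$ is right-invariant, the flow acts as a left translation: $\Phi_s(x) = \Phi_s(1_\G)\cdot x$, so $Xf(\Phi_s(\cdot))$ is the left translate of $Xf$ by $\Phi_s(1_\G)^{-1}$. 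Because $\Phi_s(1_\G) \to 1_\G$ as $s \to 0$ and left translation is continuous in $L^1$ with respect to the right Haar measure (a standard fact, proved by approximating an $L^1$ function with compactly supported continuous ones and using uniform continuity on compact sets), we conclude that $\| Xf(\Phi_s(\cdot)) - Xf(\cdot) \|_{L^1} \to 0$ as $s\to 0$, and hence the averaged bound above vanishes as $t \to 0$.

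The main obstacle is the extension of the integral representation from smooth to Sobolev $f$; it hinges on the $L^1$ density of smooth compactly supported functions in $W^{1,1}_H(\G)$ and on the fact that $\Phi_s$ preserves $dx$ uniformly in $s$. Once this is granted, the rest of the argument is a soft application of Fubini and of the $L^1$-continuity of Haar translations, with no reliance on the sub-Riemannian structure beyond the definition of $Xf$.
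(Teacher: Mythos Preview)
Your proof is correct and follows essentially the same approach as the paper: both establish the integral representation $\frac{f(\Phi_t(x))-f(x)}{t}=\frac{1}{t}\int_0^t Xf(\Phi_s(x))\,ds$ and then show the right-hand side converges to $Xf$ in $L^1$ via the continuity of translation on $L^1(\G)$, proved by approximation with compactly supported continuous (or Lipschitz) functions and the invariance $(\Phi_s)_\sharp dx = dx$. The only cosmetic difference is that the paper packages the averaging as an operator $P_t g(x)=\frac{1}{t}\int_0^t g(\Phi_s(x))\,ds$ and justifies the integral representation by absolute continuity along integral curves (plus a remark on duality), whereas you obtain it by density of $C_c^\infty$ in $W^{1,1}_H(\G)$; both routes are standard and equivalent here.
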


\begin{proof} For $dx$-a.e. $x \in \G$ we have that $t \mapsto f(\Phi_t (x))$ is an absolutely continuous function whose derivative is $Xf ( \Phi_t(x))$. In particular for almost every $x$ we have
\begin{equation}\label{eqn:PtX} \frac{f(\Phi_t(x))- f(x)}t = \frac 1t \int_0^t Xf (\Phi_s(x))\, ds = P_t ( Xf) (x), \end{equation}
where $P_t : L^1(\G) \to L^1(\G) $ is defined as $P_t (g) (x):= \frac 1t \int_0^t g(\Phi_s(x)) \, ds $.  This equality can be proven also by duality and linearity starting from~\eqref{eqn:intbypartsG}.

 Using Fubini, Jensen  and the invariance of $dx$ under translation, it is easy to see that for every $p \geq 1$ we have $\|P_t g \|_p \leq \|g \|_p$. Moreover if $g$ is H-Lipschitz we also have $|P_t g(x) - g(x)| \leq {\rm Lip}(g) t/2$. By usual triangular inequality and approximation with compactly supported Lipschitz functions we obtain that $\|P_t g  -  g \|_1 \to 0$ for every $g \in L^1(\G)$.

Now, using this observation and \eqref{eqn:PtX} we deduce

$$ \frac {f(\Phi_t(x))- f(x)}t - Xf(x) = P_t(Xf)(x) - Xf(x) \to 0 \qquad \text{ in }L^1(\G)$$
which concludes our proof.
\end{proof}

We are now ready to prove the five gradients inequality on locally compact Lie groups.

\begin{proof}[Proof of Theorem~\ref{thm:5gi Lie Group}]
First of all the existence of $d$-Lipschitz Kantorovich potentials $\phi, \psi$ is given by Theorem \ref{struct}, in particular \eqref{duality}, which is true in every Polish metric space as long as $c$ is Lipschitz.

 Let  now $\Phi_t$ be the flow of the vector field $X$. Let us then define
 \begin{subequations}
\begin{gather}
  \tilde{\phi}_t := \phi +  f'  \left ( \frac { \phi_t - \phi }{t} \right) t  \qquad \text{with } \quad \phi_t (x):= \phi (\Phi_t(1_\G)\cdot x),\label{phi}
\\ 
 \tilde{\psi}_t  := \psi  +  f' \left ( \frac { \psi_t - \psi } {t} \right ) t  \qquad \text{with } \quad \psi_t (y):= \psi (\Phi_t(1_\G)\cdot y). \label{psi} 
 \end{gather}
\end{subequations}
Let us moreover consider the set
$$ S_{\phi,\psi}:= \{ x,y \in \G \,|\, \phi (x) + \psi(y) =  h(y^{-1} x)  \}. $$
A straightforward computation shows that
$$\phi_t (x) + \psi_t (y ) = \phi (\Phi_t(1_\G)\cdot x) + \psi (\Phi_t(1_\G)\cdot y) \leq h (y^{-1} x).$$ 
Furthermore, 
 for any $t>0$ and $x,y \in S_{\phi,\psi}$ it holds
\begin{equation}\label{eq:step}
\begin{aligned}
 \frac{\phi_t(x) -\phi(x)}t + \frac{\psi_t(y)-\psi(y)}t &= \frac{ \phi_t (x) + \psi_t (y) - \phi(x) - \psi (y) }{t}   \\
& =  \frac{ \phi_t (x) + \psi_t (y) - h(y^{-1} x) }{t}  \leq 0.
\end{aligned}
\end{equation}
Since $f$ is an even convex function, by the monotonicity of the derivative, for any $s\leq -t$  we have $f'(s) \leq f'(-t)$. Furthermore, since $f$ is even, its first derivative is odd, i.e. $f'(-t)=-f'(t)$, which allow us to conclude that for any $t+s \leq 0$ it holds
\begin{equation}\label{eqn:mono} 
f'(t) + f'(s) \leq 0. 
\end{equation}
Combining Inequality~\eqref{eq:step} with Equations~\eqref{phi} and~\eqref{psi}, and applying Inequality~\eqref{eqn:mono}, we obtain, for any $x,y \in S_{\phi,\psi}$,
$$\tilde{\phi}_t(x) + \tilde{\psi}_t(y) \leq h (y^{-1} x).$$ 
Finally, consider an optimal plan  $\gamma$ for $\mu$ and $\nu$. We know that it is concentrated on $S_{\phi, \psi}$ and its marginals are $\mu$ and $\nu$. Then we find that 
\begin{align*}
\int_{\G} \tilde{\phi}_t \, d\mu + \int_{\G} \tilde{\psi}_t d \nu &=  \int_{\G \times \G} ( \tilde{\phi}_t+  \tilde{\psi}_t ) \, d \gamma  =  \int_{S_{\phi, \psi} } ( \tilde{\phi}_t+  \tilde{\psi}_t ) \, d \gamma \\&  \leq \int_{S_{\phi, \psi} } ( \phi + \psi ) \, d \gamma  = \int_{\G} \phi \, d\mu + \int_{\G} \psi d \nu \,.
\end{align*}
In particular we can write the same inequality for $-t$ and then add them up to obtain
$$ \int_{\G} (\tilde{\phi}_t +\tilde{\phi}_{-t} - 2 \phi ) \, d \mu + \int_{\G} (\tilde{\psi}_t +\tilde{\psi}_{-t} - 2 \psi ) \, d \nu \leq 0. $$
Now we compute:
$$\tilde{\phi}_t +\tilde{\phi}_{-t} - 2 \phi  = f' \left( \frac{\phi_t - \phi}t \right) t + f' \left ( \frac { \phi_{-t} -\phi } t \right) t. $$
Letting $H(x)= f' \left( \frac{\phi_t(x) - \phi(x)}t \right)$, $\overline g_t:=\Phi_t(1_\G)$ and $\overline g_{-t}:=\Phi_{-t}(1_\G)=\overline g_t^{-1}$, we notice that 
\begin{align*}
 f' \left (\frac { \phi_{-t}(x) -\phi(x) } t \right)  &=f' \left( \frac { \phi(\overline g_{-t} \cdot x) -\phi(x) } t  \right) \\ 
 &=f' \left( \frac { \phi(\overline g_{t}^{-1} \cdot g) -\phi(\overline g_t \cdot \overline g_t^{-1} \cdot x) }t \right) = - H ( \overline g_t^{-1} \cdot x )\,. 
\end{align*}
In particular, using the change of variable $g \mapsto \overline g_t^{-1} \cdot g$, which leaves the Haar measure $dx$ invariant, we get 
\begin{align*}
  \int_{\G} (\tilde{\phi}_t +\tilde{\phi}_{-t} - 2 \phi ) \, d \mu &= \int_{\G}  t (H(x) - H(\overline g_t^{-1} \cdot x)) d \mu \\
 & = t\int_{\G} H(x) \mu(x) \, dx -  t\int_{\G} H(\overline g_t^{-1} \cdot x) \mu(\overline g_t \cdot (\overline g_t^{-1} \cdot x)) \, dx \\
 & = t\int_{\G} H(x) \mu(x) \, dx -  t\int_{\G} H(x) \mu(\overline g_t \cdot x) \, dx \\
 & = t\int_{\G}   \,  f' \left( \frac{\phi_t - \phi}t \right) (\mu - \mu_t ) \, dx;
 \end{align*}
we can do the same for the term with $\tilde{\psi}_t$ and then dividing by $t^2$ and then letting $t\to 0$ we can conclude using dominated convergence and Lemma~\ref{lem:convG}.

Let us now consider the horizontal tangent space $H$ which is isomorphic to $\R^d$ with some metric $g$, which we can assume to be the usual scalar product, up to a change of coordinates. For any $v \in H$ we can consider the relative right-invariant vector field $X_v$. We can now average \eqref{eqn:5gif} for $f=f_v$ to get
\begin{equation}\label{eqn:Sd1}  -\int_{S^{d-1}} \int_{\G} \Big( f_v' ( X_v \phi   )  X_v \mu  + f_v' ( X_v \psi  ) X_v \nu \Big) \, dx \, dv \leq 0.\end{equation}
Notice that $X_v \phi =  v \cdot  \nabla_H \phi$ and similarly the other terms. Denoting $H_v(w)=f_v(  v \cdot w)$ we have that $\nabla H_v (w)= f'_v( v \cdot  w ) v$, in particular we have
$$  f_v' ( X_v \phi   )  X_v \mu = f_v'(  v \cdot  \nabla_H \phi   )   v \cdot \nabla_H \mu  = \nabla H_v ( \nabla_H \phi ) \cdot \nabla_H \mu,$$
and similarly for the other term in \eqref{eqn:Sd1}. Using now Fubini and the linearity of the gradient we see that \eqref{eqn:Sd1} is equivalent to 
$$  - \int_{\G} \Big( \nabla G( \nabla_H \phi   ) \cdot \nabla_H \mu  + \nabla G( \nabla_H \psi   ) \cdot \nabla_H \nu \Big)\,  dx \, \leq 0.$$
Notice that if $f_v$ is independent of $v$, then $G$ is rotation invariant and so $G(w)=g(\|w\|)$ for some convex $g$. When $f_v(t)=|t|^p$ we get precisely $g(t)=c_p |t|^p$ for some $c_p >0$, which let us conclude.
\end{proof}

\section{Preliminary results in Riemannian geometry}

In this section, we collect some facts about optimal transport and Riemannian geometry that we will need in the sequel. We refer  the reader to \cite{Villani,MC} for more details on the optimal transport theory (in the particular case of Riemannian manifolds) and to~\cite{Baer,docarmo} for what concerns the variation of the arc length on Riemannian manifolds.

\subsection{Optimal transport on compact Riemannian manifolds with general costs}

We begin by recasting the definition of a semi-concave function on a Riemannian manifold.

\begin{definition}
Let $U$ be an open set of a smooth Riemanian manifold $(\M,g)$ and let $\omega : (0,\infty) \to(0,\infty)$ be continuous, such that $\omega(r) = o(r)$ as $r \to 0$. A function $\phi : U\subset \M \to \R \cup \{+\infty\}$ is said to be \emph{semi-concave} with modulus $\omega$ if, for any constant-speed geodesic path $\gamma(t)$, whose image is included in $U$, it holds
$$(1-t)\phi(\gamma(0))+t \phi(\gamma(1)) - \phi(\gamma(t))\leq t(1-t)\omega (d(\gamma(0),\gamma(1)))$$
where $d$ is the Riemannian distance defined by Equation~\eqref{eq:Riem dist}. In particular, we will say that a function is $\Lambda$-concave if it is semi-concave with $\omega(t)=\Lambda t^2$.
\end{definition}

\begin{rmk}\label{rmk:semiconcavity}
Notice that the cost function $d^2$ fails to be semiconvex at the cut locus (cf.~\cite[Proposition 2.5]{Cor}), however it is semiconcave (cf.~\cite[Corollary 3.3]{Cor}), which is equivalent to requiring $\frac {d^2}{dt^2} d^2(\gamma_t,y) \leq C$ in the viscosity sense for every geodesic $\gamma_t$. Moreover under (cost2reg) $c$ is also semiconcave; in fact, letting $\psi(t)= d^2(\gamma_t,y)$, we get
$$ \frac{d^2}{dt^2} c(\gamma_t,y) =   \frac{d^2}{dt^2} f(\psi(t)) = f''( \psi (t)) \psi'(t)^2 + f'(\psi(t)) \psi''(t) \leq \tilde{C},$$
where we used that $\psi, \psi'$ are bounded in terms of the diameter of $\M$, and that $f'$ and $f''$ are bounded on bounded sets.
\end{rmk}

Let now $\Prob(\M)$  be the set of probability measures in $\M$, a smooth and compact Riemannian manifold.
\begin{thm}\label{struct}
Let  $\mu,\nu \in \Prob(\M)$ be two probabilities on $\M$ which are absolutely continuous with respect to $\voll$: with a slight abuse of notation we will sometimes identify $\mu$ and $\nu$ with their densities with respect to $\voll$. Let us consider a cost function $c=h \circ d$ that satisfies (cost) for some strictly increasing continuous function $\lambda:[0,+\infty) \to [0,+\infty)$. Then the following hold:
\begin{enumerate}
\item[(i)] The problem
\begin{equation}\label{kantorovich}
\Ch(\mu,\nu):=
\min\left\{\int_{\M\times\M} c(x,y) \, d \gamma\;:\;\gamma\in\Pi(\mu ,\nu)\right\},
\end{equation}
where
$\Pi(\mu ,\nu)$ is the set of  {\em transport plans}, i.e.
 $$\Pi(\mu,\nu):=\{\gamma\in\Prob(\M\times\M):\,(\pi^x)_{\#}\gamma=\mu ,\,(\pi^y)_{\#}\gamma=\nu\},$$
  has a unique solution, which is of the form $\gamma_{\T}:=(id,\T)_\#\mu$, and $\T:\M\to\M$ is a solution of the problem 
\begin{equation}\label{transomega}
\min_{\T_\# \mu=\nu } \int_{\M}h (d(x,\T(x))) \, d \mu (x) \,.
\end{equation}
\item[(ii)] The map $ \T: \{\mu>0\}\to \{\nu>0\}\) is $\voll$-a.e. invertible and its inverse $\S:=\T^{-1}$ is a solution of the problem 
\begin{equation}\label{transomegaS}
\min_{\S_\# \nu=\mu} \int_{\M} h(d(\S(y),y)) \, d \nu(y). 
\end{equation}
\item[(iii)] We have 
\begin{equation}\label{duality}
\Ch (\mu,\nu)=\max_{\phi,\psi\, \in Lip(\M)}\left\{ \int_{\M}\phi(x)\, d \mu(x) +\int_{\M}\psi(y) \, d \nu(y) \ :\  \phi(x)+\psi(y)\le c(x,y),\ \forall x, y\in\M\right\}.
\end{equation}
\item[(iv)]
If $\phi,\psi$ are optimal in \eqref{duality}, they are clearly Lipschitz and differentiable almost everywhere; moreover we have:
\begin{itemize}
\item $\T(x)=\exp_x( \lambda^{-1}(-\nabla\varphi(x)))$  and  $\S(y)=\exp_y(\lambda^{-1}(-\nabla\psi(y)))$, almost everywhere; in particular, the gradients of the optimal functions are uniquely determined (even in case of non-uniqueness of $\phi$ and $\psi$) a.e. on $\{\mu>0\}$ and $\{\nu>0\}$, respectively;
\item if $c=h \circ d$ satisfies (cost2reg) the functions $\phi$ and $\psi$ are $\Lambda$-concave for some $\Lambda \in \R$;
\item $\phi(x)=\min_{y\in\M}\left\{h(d(x,y))-\psi(y)\right\}\qquad\hbox{and}\qquad \psi(y)=\min_{x\in\M}\left\{h(d(x,y))-\phi(x)\right\};$
\item if we denote by $\chi^c$ the  $c-$transform of a function $\chi:\M\to\R$ defined through $\chi^c(y)=\inf_{x\in\M} \{c(x,y)-\chi(x)\}$, then the maximal value in \eqref{duality} is also equal to 
\begin{equation}\label{cconc}
\max_{\phi \, \in C^0(\M)} \left\{ \int_{\M}\phi(x)d \mu(x) +\int_\M \phi^c(y)d \nu(y) \right\}
\end{equation}
and the optimal $\phi$ is the same as above, and is such that $\phi=(\phi^c)^c$ a.e. on $\{\mu >0\}$.
\end{itemize}
\item[(vi)] If $\nu \in \Prob(\M)$ is given, the functional $\mathcal{F}:\Prob(\M)\to\R$ defined through
$$\mathcal{F}(\mu)=\Ch(\mu,\nu)=\max_{\phi \in C^0 (\M)} \left\{ \int_{\M}\phi(x) \, d \mu(x)+\int_\M \phi^c(y)\, d \nu(y) \right\}$$
is convex. Moreover, if $\{\nu>0\}$ is a connected open set we can choose a particular potential $\hat\phi$, defined as
$$\hat\phi(x)=\inf\left\{h(d(x,y))-\psi(y)\,:\,y\in \spt (\nu)\right\},$$
where $\psi$ is the unique (up to additive constants) optimal function $ \psi$ in \eqref{duality} 
(i.e. $\hat\phi$ is the $c-$transform of $\psi$ computed on $\M\times\spt(\nu)$). With this choice, if $\chi=\tilde\ro-\ro$ is the difference between two probability measures, then we have
$$\lim_{\ve\to 0} \frac{\mathcal{F}(\varrho+\ve\chi)-\mathcal{F}(\varrho)}{\ve}=\int_\M \hat\phi\,\dd\chi.$$ As a consequence, $\hat \phi$ is the first variation of $\mathcal{F}$ and from point (v) we deduce that $\hat \phi$ coincides with $\phi$ of the optimal couple $(\phi,\psi)$ in \eqref{duality}. 
\end{enumerate} 
\end{thm}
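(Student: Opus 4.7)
The plan is to deduce all the claims by combining standard Kantorovich duality for lower semicontinuous costs on compact Polish spaces with McCann's Brenier-type theorem for Riemannian optimal transport, isolating at each step the one ingredient that depends on the generalization from $c = d^2/2$ to $c = h \circ d$ with $h$ strictly convex and strictly increasing. The crucial point that needs checking is the twist condition for $c$; all other pieces are either formal consequences or transcribe unchanged from the quadratic case.

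For claims (i) and (iii), I would establish existence by the direct method: $\Pi(\mu,\nu)$ is weakly-$*$ compact because $\M$ is compact (and hence tight) and the constraint is weakly closed, while $c = h \circ d$ is bounded and continuous on $\M \times \M$, so the primal \eqref{kantorovich} admits a minimizer $\gamma$. The duality \eqref{duality} and the existence of Lipschitz maximizers follow by the Kantorovich--Rubinstein duality argument (via Fenchel--Rockafellar, or by direct construction through $c$-transform iteration $\phi \leftrightarrow \phi^c$): since $h \in C^1$ and $\M$ is compact, $c$ is Lipschitz on $\M \times \M$, and any $c$-conjugate inherits the same Lipschitz constant, which automatically bounds $\phi$ and $\psi$ in $Lip(\M)$. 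The characterization $\phi = (\phi^c)^c$ and the equivalent formulation \eqref{cconc} are then formal.

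The structural heart of the theorem is (i) together with the representation of $T$ in (iv), and it hinges on the twist condition. Away from the cut locus $\mathrm{Cut}(x)$, $y \mapsto c(x,y)$ is smooth and $\nabla_x c(x,y) = -\lambda(d(x,y))\, \exp_x^{-1}(y)/|\exp_x^{-1}(y)|$; writing $y = \exp_x(v)$ this reads $v \mapsto -\lambda(|v|)\, v/|v|$, which is injective because $\lambda = h'$ is strictly increasing by strict convexity of $h$ (and $h'(0)=0$ ensures continuity at the origin). Combining this twist property with absolute continuity of $\mu$, the fact that $\mathrm{Cut}(x)$ has zero $\voll$-measure, and the $\mu$-a.e.\ differentiability of the Lipschitz potential $\phi$, every optimal plan must be supported in a graph $\{(x, T(x))\}$, where the optimality relation $-\nabla \phi(x) = \nabla_x c(x,T(x))$ uniquely determines $T(x) = \exp_x(\lambda^{-1}(-\nabla \phi(x)))$. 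Uniqueness of $\gamma = \gamma_T$ follows. The semi-concavity of $\phi$ under (cost2reg) follows from Remark \ref{rmk:semiconcavity}: since $c(x,\cdot)$ is uniformly $\Lambda$-concave, the infimum $\phi(x) = \inf_y (c(x,y) - \psi(y))$ is an infimum of uniformly $\Lambda$-concave functions, hence $\Lambda$-concave itself.

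Claim (ii) is obtained by running the same argument on the dual problem \eqref{transomegaS} with the roles of $\mu$ and $\nu$ exchanged, producing a map $S$ with $S_\# \nu = \mu$, and then observing that the identity $\gamma = (\id, T)_\# \mu = (S, \id)_\# \nu$ forces $S \circ T = \id$ $\mu$-a.e. For (vi), convexity of $\mathcal F$ is immediate from \eqref{cconc}, which expresses $\mathcal F(\mu)$ as a supremum of affine functionals in $\mu$. The first variation is an envelope-type argument: among all optimal potentials for $(\mu,\nu)$, the function $\hat\phi$ is distinguished because $\psi$ is unique up to additive constants on a connected $\{\nu>0\}$ (a consequence of $c$-cyclical monotonicity), and then one differentiates \eqref{cconc} along $\varrho + \ve\chi$ and passes to the limit by dominated convergence. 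The main genuine obstacle will be handling the cut locus rigorously when proving uniqueness of $T$: one must rule out that $\mu$ places mass on points $x$ where the would-be optimal $T(x)$ falls on $\mathrm{Cut}(x)$, and for this one invokes the semi-convexity of $d^2(\cdot, y)$ outside the cut locus together with Alexandrov-type second differentiability of the $c$-concave $\phi$, in the spirit of McCann.
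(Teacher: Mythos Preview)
Your proposal is correct and proceeds exactly along the lines the paper intends. In fact the paper does not give a self-contained proof of this theorem at all: after stating it, the authors simply remark that ``the only non-standard point is the last one'' and refer to \cite{OTAM}, \cite{Buttazzo}, \cite{MC}, \cite{Villani} for the rest, adding only that uniqueness of $\psi$ on $\spt(\nu)$ follows from uniqueness of its gradient and connectedness of $\{\nu>0\}$. Your sketch is thus \emph{more} detailed than what appears in the paper, but it identifies the same ingredients (Kantorovich duality on a compact space, the twist condition via strict monotonicity of $\lambda=h'$, McCann's graph structure, semiconcavity from Remark~\ref{rmk:semiconcavity}, the envelope argument for (vi)) and invokes them in the same order. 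One cosmetic slip: the first-order optimality condition reads $\nabla\phi(x)=\nabla_x c(x,\T(x))$, not $-\nabla\phi(x)=\nabla_x c(x,\T(x))$; with your (correct) formula for $\nabla_x c$ this is what yields $\T(x)=\exp_x(\lambda^{-1}(-\nabla\phi(x)))$.
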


The only non-standard point is the last one. For more details we refer to \cite[Section 7.2]{OTAM}, (see also~\cite{Buttazzo} for a sketch of the proof.) Uniqueness of $ \psi$ on $\spt(\nu)$ is obtained from the uniqueness of its gradient and the connectedness of  $\{\nu >0\}$. 
\smallskip

We can now state (and prove) the following result, which is a direct consequence of~\cite{DF,Cor,MC}.
\begin{thm}\label{thm:DF}
Let $\M$ be a smooth Riemannian manifold, and let $\mu,\nu : \M \to (0,\infty)$  be two continuous probability densities, locally bounded away from zero and infinity on $\M$, and let us consider a cost function $c: \M \times \M \to [0, +\infty)$ satisfying (cost2reg). Let $\T: \M \to \M$
denote the optimal transport map for the cost $c$  sending $\mu$ onto $\nu$. Then there exist two closed
sets $\Sigma_\mu$, $\Sigma_\nu \subset \M$ of measure zero such that $\T: \M \setminus \Sigma_\mu \to \M \setminus \Sigma_\nu$ is a homeomorphism of class $C_{loc}^{0,\beta}$ for any $\beta < 1$. In addition, if both $\mu$ and $\nu$ are of class $C^{k,\alpha}$ then $\T: \M \setminus \Sigma_\mu \to \M \setminus \Sigma_\nu$ is a
diffeomorphism of class $C_{loc}^{k+1,\alpha}$.
\end{thm}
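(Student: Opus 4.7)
The plan is to combine McCann's structure theorem for the optimal map, Cordero-Erausquin's semiconcavity estimates, and De Philippis--Figalli's partial regularity theorem, applied locally in coordinate charts. The existence and representation parts are already encoded in Theorem \ref{struct}; what remains is a regularity statement for $\T$ off a closed negligible set.

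First I would identify the singular sets. By Theorem \ref{struct}(iv), the optimal map is $\T(x) = \exp_x(\lambda^{-1}(-\nabla\phi(x)))$ for $\mu$-a.e.\ $x$, where under (cost2reg) the potential $\phi$ is $\Lambda$-semiconcave (Remark \ref{rmk:semiconcavity}). Semiconcavity guarantees that $\phi$ is differentiable outside a closed $\voll$-negligible set. A natural candidate for $\Sigma_\mu$ is the union of this set, the set of points $x$ where $(x,\T(x))$ lies in the cut locus of $c$, and the set where the density $\mu$ vanishes or blows up; $\Sigma_\nu$ is defined analogously using $\psi$ and $\S = \T^{-1}$. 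The cut-locus characterization of McCann \cite{MC} and the semiconcavity estimates of Cordero-Erausquin \cite{Cor} together ensure that these sets are closed and of measure zero.

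Next, outside $\Sigma_\mu$ and $\Sigma_\nu$ I would work locally in charts $U \subset \M \setminus \Sigma_\mu$ and $V \subset \M \setminus \Sigma_\nu$ with $\T(U) \subset V$. On $U \times V$, the cost $c = h \circ d$ is $C^\infty$ and satisfies the twist condition $\det D_{xy}^2 c \neq 0$, because $\exp$ is a local diffeomorphism away from the cut locus and $h'(t) = \lambda(t) > 0$ for $t > 0$ by (cost2reg). Pulling back through the charts, we have a Euclidean optimal-transport problem with a smooth twisted cost, $\Lambda$-semiconcave potentials, and continuous densities locally bounded away from $0$ and $\infty$. These are precisely the hypotheses of the partial regularity result of \cite{DF}, which immediately yields that $\T \in C^{0,\beta}_{\rm loc}(\M \setminus \Sigma_\mu , \M \setminus \Sigma_\nu)$ for every $\beta < 1$, and is a homeomorphism onto its image after applying the same argument to $\S$.

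Finally, once $\T$ is H\"older continuous and the densities are more regular, I would bootstrap on the Monge--Amp\`ere type equation satisfied by $\phi$ in each chart. Under the assumption $\mu,\nu \in C^{k,\alpha}$, the $C^{0,\beta}$ regularity of $\T$ makes the linearized equation uniformly elliptic on compact subsets of $U$, so Schauder estimates upgrade $\phi$ to $C^{k+2,\alpha}_{\rm loc}$ and hence $\T$ to $C^{k+1,\alpha}_{\rm loc}$, since it is a composition of $\nabla\phi$ with the smooth map $\exp_x \circ \lambda^{-1}$. The main obstacle is the careful definition of $\Sigma_\mu, \Sigma_\nu$: we need them closed, negligible, and such that the twist condition and non-degeneracy of the Monge--Amp\`ere operator hold uniformly on every compact subset of the good set. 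This is exactly where the detailed structure of the cut locus from \cite{MC, Cor} and the explicit form $c = h \circ d$ enter the argument.
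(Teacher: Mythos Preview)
Your proposal is correct and follows essentially the same route as the paper: McCann's representation of $\T$, semiconcavity of the potential via \cite{Cor}, avoidance of the cut locus, and reduction in local charts to the Euclidean partial-regularity theorem of De Philippis--Figalli \cite{DF}. The one point where the paper proceeds slightly differently is that it does not attempt to define $\Sigma_\mu,\Sigma_\nu$ a priori (your candidate---the non-differentiability set of a semiconcave function---need not be closed); instead it lets the closed negligible singular sets emerge directly from \cite[Theorem~1.3]{DF} applied in charts, which is also how the higher $C^{k+1,\alpha}$ regularity is obtained without a separate Schauder argument.
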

\begin{proof}
As shown in \cite[Theorem 13]{MC}, given two probability densities $\mu$ and $\nu$ supported on $\M$, there exists a $c$-convex function $u: \M \to \R \cup \{\infty\}$ such
that $u$ is differentiable $\mu$-a.e., and $\T_u(p) = \exp_p (\lambda^{-1}(\nabla u(p)))$ (where $\lambda$ is that of (cost)) is the unique optimal transport map sending $\mu$ onto $\nu$. 
Furthermore, as noted in Remark~\ref{rmk:semiconcavity}, $c$ is semiconcave; in particular, the semiconcavity is inherited by the optimal Kantorovich potential $\phi$, and so $u=-\phi$ is semiconvex. By Alexandrov's Theorem, we get that $u$ is twice differentiable almost everywhere and, 
arguing as in \cite[Proposition 4.1]{Cor}, we conclude that  $\T_u (p)$ is not in the cut-locus of $p$. Since the cut-locus is closed and $c$ is smooth outside the cut-locus, there exists a set $X$ of full measure such
that, for every $p_0 \in X$, $u$ is twice differentiable at $p_0$ and there exists a neighborhood
$U_{p_0} \times V_{\T_u(p_0)} \subset \M \times \M$ of $(p_0, \T_u (p_0))$ such that $c \in C^\infty(U_{p_0} \times V_{T_u(p_0)})$. By taking a
local chart around $(p_0, \T_u (p_0))$ we reduce ourself to \cite[Theorem 1.3]{DF}. This shows that $\T_u$
is a local homeomorphism (resp. diffeomorphism) around almost every point. In particular, since $\T_u$ is invertible a.e., it can be shown that $\T_u$ is also a global homeomorphism
(resp. diffeomorphism) outside a closed singular set of measure zero. For more details we refer to~\cite{DF}.
\end{proof}

It follows that $\T$ has the form $\T(p):= \exp_p (\lambda^{-1}(\nabla u))$ for a proper $c$-convex function $u$ defined on $\M$ which is smooth on $\M \setminus \Sigma$ and $\Sigma$ is a closed set of zero volume;  notice that under the assumption (cost2reg) we also have that the vector field $\Xi=\lambda^{-1}(\nabla u)$, the generator of $\T$, is at least $C^2$ on the open set $\M \setminus \Sigma$. Thus, $\T$ is a diffeomorphism only on $\M \setminus \Sigma$. Moreover, there exist two functions $\phi,\,\psi : \M \to \R$, smooth on $\M \setminus \Sigma$, which are the Kantorovich potentials of the optimal transport problem. Since $h$ is bounded and Lipschitz, we can assume that $\phi,\, \psi, \, \nabla \phi$ and $\nabla \psi$ are uniformly bounded on $\M$.
Actually, $\phi = \psi^c$ and the optimality of the Kantorovich potentials implies 
\begin{equation}\label{da trasporto ottimo 1}
\int_\M \phi d\mu + \int_\M \psi d\nu = \sup\left\lbrace \int_\M \widetilde{\phi} d\mu + \int_\M \widetilde{\psi} d\nu \text{ among } (\widetilde{\phi}, \widetilde{\psi})\, :  \widetilde{\phi}(a) + \widetilde{\psi}(b) \leq h(d(a,b)) \text{ on } \M \times \M \right\rbrace 
\end{equation}
and
\begin{equation}\label{da trasporto ottimo 2}
\phi(q) + \psi(\T(q)) = h(d(q,\T(q))) \quad \text{ in   $ \M \setminus \Sigma$ }
\end{equation}

We conclude this section, by showing that the optimal transport map is stable.

\begin{prop}[Stability of optimal transport]\label{prop:stability} Let $\M$ be a compact manifold, $c_n,c: \M \times \M \to [0, +\infty)$ be $L$-Lipschitz uniformly bounded cost functions such that $c_n$ uniformly converges to $c$. Let $\mu_n, \nu_n $ be probability measures on $\M$, let $\gamma_n$ be  an optimal plan for $\mathcal{C}_{c_n}(\mu_n, \nu_n)$ and let $\phi_n, \psi_n$ be $c_n$-concave optimal Kantorovich potentials. 
 We further assume that
 $\phi_n, \psi_n$ are  $L$-Lipschitz and uniformly bounded.
Suppose that $\mu_n \rightharpoonup \mu$ and $\nu_n \rightharpoonup \nu$. Then, up to subsequences we have that $\phi_n$ uniformly converges to $\phi$, $\psi_n$ uniformly converges to  $\psi$ and $\gamma_n \rightharpoonup \gamma$ where $\gamma, \phi, \psi $ are respectively an optimal plan and optimal potentials for $\mathcal{C}_c(\mu, \nu)$.

If in addition $\mu_n, \mu \ll \vol$, and $c_n, c$ satisfy (cost), we also have $\nabla \phi_n \to \nabla \phi$ $\vol$-a.e.
\end{prop}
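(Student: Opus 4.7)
The plan is to combine Arzel\`a--Ascoli with the weak compactness of probability measures on $\M \times \M$, pass to the limit in the Kantorovich duality, and finally obtain almost everywhere gradient convergence through equi-semiconcavity of the potentials.

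First, since $\M$ is compact and the families $\{\phi_n\}, \{\psi_n\}$ are by hypothesis equi-Lipschitz and uniformly bounded, Arzel\`a--Ascoli extracts a subsequence along which $\phi_n \to \phi$ and $\psi_n \to \psi$ uniformly on $\M$, with $L$-Lipschitz limits. By compactness of $\M \times \M$, the family $\{\gamma_n\}$ is tight, so a further subsequence satisfies $\gamma_n \rightharpoonup \gamma$ for some $\gamma \in \Prob(\M \times \M)$. Testing the marginals against arbitrary $f \in C(\M)$ and using $\mu_n \rightharpoonup \mu$, $\nu_n \rightharpoonup \nu$, one finds $\gamma \in \Pi(\mu,\nu)$.

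Next, I would pass to the limit in the admissibility constraint $\phi_n(x) + \psi_n(y) \leq c_n(x,y)$: uniform convergence yields $\phi(x) + \psi(y) \leq c(x,y)$ on all of $\M \times \M$, so $(\phi,\psi)$ is admissible in the dual problem for $\mathcal{C}_c(\mu,\nu)$. The optimality identity for each $n$,
$$ \int_\M \phi_n \, d\mu_n + \int_\M \psi_n \, d\nu_n = \int_{\M \times \M} c_n \, d\gamma_n, $$
passes to the limit because the integrands converge uniformly and the (probability) measures converge weakly, giving
$$ \int_\M \phi \, d\mu + \int_\M \psi \, d\nu = \int_{\M \times \M} c \, d\gamma. $$
Together with weak duality $\int \phi \, d\mu + \int \psi \, d\nu \leq \mathcal{C}_c(\mu,\nu) \leq \int c \, d\gamma$, this forces equality throughout, so $\gamma$ is an optimal plan and $(\phi,\psi)$ are optimal potentials for $\mathcal{C}_c(\mu,\nu)$.

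For the almost everywhere convergence of gradients, I would use that $c_n$-concave potentials for costs of the form $h_n \circ d$ with $h_n \in C^1$ convex and increasing are semi-concave on any open set uniformly avoiding the cut-locus (as in Remark~\ref{rmk:semiconcavity}, suitably adapted via the chain rule to generic convex $h_n$). Since the Lipschitz bounds on the potentials and the geometric data are uniform, the semi-concavity modulus can be chosen uniformly in $n$. At every $x_0 \in \{\mu>0\}$ at which $\phi$ is differentiable and whose image under $T$ does not lie in the cut-locus, any cluster point of $\nabla \phi_n(x_0)$ must lie in the superdifferential $\partial^+\phi(x_0)$, which reduces to $\{\nabla \phi(x_0)\}$; hence $\nabla \phi_n(x_0) \to \nabla \phi(x_0)$ at $\vol$-almost every point of $\{\mu>0\}$, invoking $\mu \ll \vol$ to absorb the $\vol$-null cut-locus set. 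The main obstacle is justifying equi-semiconcavity under the weaker assumption (cost) rather than (cost2reg): this is handled by a local argument in charts, away from the cut-locus, where the $c_n$ are uniformly $C^1$, exploiting the inf-convolution representation $\phi_n(x) = \inf_y\{c_n(x,y) - \psi_n(y)\}$ to transfer a one-sided modulus of concavity from the cost to the potentials.
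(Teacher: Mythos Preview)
Your argument for the convergence of potentials and plans is essentially the paper's: Arzel\`a--Ascoli, weak compactness, pass to the limit in the constraint and in the duality identity. Fine.

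The gradient part, however, has a real gap. You propose equi-semiconcavity of the $\phi_n$, but under (cost) alone the costs $c_n(x,y)=h_n(d(x,y))$ need not be semiconcave in $x$: $h_n$ is only assumed $C^1$ and convex, so $h_n''$ may fail to exist or be unbounded (e.g.\ $h(t)=t^2+(t-1)_+^{3/2}$), and your ``local $C^1$ argument in charts'' cannot manufacture a one-sided second-order bound out of $C^1$ data. Remark~\ref{rmk:semiconcavity} produces semiconcavity only under the stronger (cost2reg). Without equi-semiconcavity the passage ``cluster points of $\nabla\phi_n(x_0)$ lie in $\partial^+\phi(x_0)$'' is not justified.

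The paper avoids semiconcavity altogether and instead exploits the transport-map structure available under (cost). At any $x$ where all $\phi_n$ and $\phi$ are differentiable (a set of full $\vol$-measure), one has $\T_n(x)=\exp_x(\lambda_n^{-1}(-\nabla\phi_n(x)))$ and the contact equality $\phi_n(x)+\psi_n(\T_n(x))=c_n(x,\T_n(x))$. By compactness of $\M$, any cluster point $\bar y$ of $\T_n(x)$ satisfies $\phi(x)+\psi(\bar y)=c(x,\bar y)$ after passing to the limit; differentiability of $\phi$ at $x$ then forces $\bar y=\T(x)$ (McCann's argument, \cite[Lemma~7]{MC}), hence $\T_n(x)\to\T(x)$. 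The same lemma gives that $x'\mapsto d(x',\T(x))$ is differentiable at $x$, so the geodesic from $x$ to $\T(x)$ is unique; since $\nabla\phi_n(x)$ encodes both the direction of that geodesic and the magnitude $h_n'(d(x,\T_n(x)))$, one reads off $\nabla\phi_n(x)\to\nabla\phi(x)$. This route uses only the $C^1$ data in (cost) and the first-order contact relation, which is why it succeeds where the semiconcavity strategy stalls.
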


\begin{proof} We already know that we can assume $\phi_n, \psi_n$ to be $L$-Lipschitz and uniformly bounded. By Ascoli-Arzelà we can thus extract subsequences that converge uniformly to $\phi, \psi$; passing to the limit $\phi_n(x)+\psi_n(y) \leq c_n(x,y)$ we obtain $\phi(x)+\psi(y) \leq c(x,y)$, that is $\phi$ and $\psi$ are admissible potentials for $c$. Thanks to the weak compactness of $\mathcal{P}(\M \times \M)$ we can also assume that $\gamma_n \rightharpoonup \gamma$; it is easy to see that since $\gamma_n \in \Pi(\mu_n, \nu_n)$ we have $\gamma \in \Pi(\mu, \nu)$. If $c_n$ satisfy (cost) then, arguing similarly to \cite[Lemma 7]{MC}, in every point $x$ of differentiability of $\phi_n$, we have that $\phi_n(x) + \psi_n (y) = c_n(x,y)$ on the support of any optimal transport plan (in particular, on the support of $\gamma_n$), moreover 
\begin{equation}\label{eqn:equality} \phi_n(x)+\psi_n(\T_n(x))=c_n(x,\T_n(x)),\end{equation}
where $\T_n(x)=\exp_x (\lambda_n^{-1}(-\nabla \phi_n(x)))$.
We can now compute
\begin{align*}
\left| \int_{\M} \phi(x) \, d\mu + \int_{\M} \psi (y) \, d \nu - \int_{\M \times \M} c(x,y) \, d \gamma\right| &=\lim_{n \to \infty} \left|  \int_{\M \times \M}  (\phi + \psi -c) \, d \gamma_n \right|  \\
 & =  \lim_{n \to \infty}\left| \int_{\M \times \M} (\phi+ \psi-c - (\phi_n+\psi_n - c_n) ) \, d \gamma_n \right|  \\
 & \leq\lim_{n \to \infty}  \|\phi - \phi_n\|_{\infty} + \| \psi- \psi_n\|_{\infty} +\| c- c_n\|_{\infty} =0;
\end{align*}
we deduce that $\gamma, \phi, \psi$ are respectively an optimal plan and optimal potentials.

Let us consider $A$ the set of full $\vol$ measure where $\phi$ and $\phi_n$ for every $n \in \mathbb{N}$ are differentiable. Let us fix $x \in A$ and let $\bar{y}$ be a limit point for the sequence $\T_n(x)$; using the uniform continuity of $\phi_n, \psi_n, c_n$ and their uniform convergence, passing to the limit \eqref{eqn:equality} we obtain $\phi(x)+\psi(\bar{y})=c(x,\bar{y})$. Arguing again as \cite[Lemma 7]{MC} we get $\bar{y}=  \exp_x (\lambda^{-1}(-\nabla \phi(x)))=\T(x)$; since this holds true for every limit point we get $\T_n (x) \to \T(x)$ in $A$. Moreover, as in \cite[Lemma 7]{MC}, we get that $x' \mapsto d(x',\bar{y})$ is differentiable in $x$ as well and so there is a unique geodesic between $x$ and $\bar{y}$, which in turn implies $\nabla \phi_n (x) \to \nabla \phi(x)$.
\end{proof}

\subsection{Variations of the arc-length}\label{sec:variations}

Let $(\M,g)$ be a connected, oriented, compact, smooth Riemannian manifold of dimension $n$. On account of the Hopf-Rinow theorem,  $\M$ is geodesically complete, i.e. for every $p \in \M$, the exponential map $\exp_p:T_p\M\to\M$ is defined on the entire tangent space.  This assumption implies that for any two points $p$ and $q$ in $\M$, there exists a length minimizing geodesic 
\begin{align*}
\gamma: & [a,b]\to\M \qquad \text{ with } \qquad\gamma(a)=p \; \text{ and } \; \gamma(b)=q
\end{align*} 
and their Riemannian distance $d$ coincides with the arc-length of $\gamma$, namely
\begin{equation}\label{eq:Riem dist}
d(p,q)\equiv L(\gamma):=  \int_a^b \|\dot{\gamma}\|\, dt\,, 
\end{equation}
where $\dot{\gamma}$ denotes the tangent vector to $\gamma$ and $\|\dot{\gamma}\|:=g(\dot{\gamma},\dot{\gamma})^{\frac{1}{2}}$.  Since the arc-length of $\gamma$ is invariant under reparametrization, we parametrize $\gamma$ by arc-length, namely
$$ \gamma:[0,\ell] \to \M \qquad \text{ and } \qquad \|\dot{\gamma}\|=1\,$$
where $\ell = L(\gamma)$.
Consider a smooth orthonormal frame $\{e_i\}_{i=1,\dots,n}$ defined in an open neighborhood $\cU$ of $\gamma$.
Each vector field $e_i$ is the infinitesimal generator of a (local) flow which we denote as $\Phi_i$.
Then, for $\varepsilon >0$ small enough, we can define $n$-smooth variations of $\gamma$ via
\begin{equation*}\label{eq:variation}
   f_i : [0,\varepsilon]\times [0,\ell]\to \cU,\qquad (s,t)\mapsto f_i(s,t):= \Phi_i(s,\gamma(t))\,.
\end{equation*}
For every fixed $s \in [0,\varepsilon]$ we denote with $\gamma^s_i:= [0,\ell] \to \M $ the curve obtained as a variation of $\gamma$ whose endpoints are $f_i(s,0)$ and $f_i(s,\ell)$ and whose variational vector fields are exactly $e_i|_{\gamma}=:\xi_i$.
By denoting with $\dot{\gamma}_i^s$ the tangent vector to $\gamma_i^s$,  we have the following:
\begin{lemma}\label{lem:variationslength}
Let $\gamma_i^s$ be a variation of geodesic $\gamma$ connecting $p$ to $q$ defined as above. Then the first and second variations of the arc-length $L(\gamma_i^s)$ satisfy respectively
\begin{equation}\label{1st variation}
\frac{d}{ds}\Big|_{s=0}L(\gamma_i^s) =  g\big(\xi_i,\dot{\gamma} \big) \Big|_0^\ell
\end{equation}
\begin{align}\label{2nd variation}
 \frac{d^2}{ds^2}\Big|_{s=0}L(\gamma_i^s) \leq - \Big( g\big(\xi_i,\dot{\gamma} \big) \Big|_0^\ell\Big)^2 +  g\big( \nabla_{\xi_i} \xi_i, \dot{\gamma }\big)\Big|_0^\ell   +  \int_0^\ell \Big( \| \nabla_{\dot{\gamma}} \xi_i\|^2 - g\big(R(\xi_i,\dot{\gamma})  \dot{\gamma},\xi_i \big) \Big) dt  
\end{align} 
\end{lemma}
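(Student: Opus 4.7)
The plan is to apply the classical first and second variation formulas for arc length, adapted to the variation $f_i(s,t)=\Phi_i(s,\gamma(t))$, and then convert the pointwise identity for the second variation into the stated inequality via Cauchy--Schwarz. Setting $T:=\partial_t f_i$ and $S:=\partial_s f_i$, I will exploit two structural facts throughout: first, since $[\partial_s,\partial_t]=0$ and the Levi-Civita connection is torsion-free, $\nabla_S T=\nabla_T S$ and the curvature identity reduces to $\nabla_S\nabla_T S-\nabla_T\nabla_S S=R(S,T)S$; second, since $\gamma$ is a geodesic, $\nabla_{\dot\gamma}\dot\gamma=0$, and at $s=0$ we have $T=\dot\gamma$ (unit speed) and $S=\xi_i$.

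For the first variation I would differentiate under the integral sign in $L(\gamma_i^s)=\int_0^\ell \|T\|\,dt$. Standard calculation gives $\frac{d}{ds}\|T\|=g(\nabla_S T,T)/\|T\|$, which at $s=0$ becomes $g(\nabla_{\dot\gamma}\xi_i,\dot\gamma)$ after using $\nabla_S T=\nabla_T S$. Because $\nabla_{\dot\gamma}\dot\gamma=0$, this integrand is the total derivative $\frac{d}{dt}g(\xi_i,\dot\gamma)$, so integrating over $[0,\ell]$ yields \eqref{1st variation}.

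For the second variation, I would compute
\[
\frac{d^2}{ds^2}\|T\|=\frac{g(\nabla_S\nabla_S T,T)+\|\nabla_S T\|^2}{\|T\|}-\frac{g(\nabla_S T,T)^2}{\|T\|^3},
\]
and rewrite $\nabla_S\nabla_S T=\nabla_S\nabla_T S=R(S,T)S+\nabla_T\nabla_S S$ via the curvature identity. At $s=0$, using the symmetry $g(R(\xi_i,\dot\gamma)\xi_i,\dot\gamma)=-g(R(\xi_i,\dot\gamma)\dot\gamma,\xi_i)$ and integrating the term $g(\nabla_{\dot\gamma}\nabla_{\xi_i}\xi_i,\dot\gamma)$ by parts along $\gamma$ (this is again a total derivative because $\nabla_{\dot\gamma}\dot\gamma=0$, giving the boundary contribution $g(\nabla_{\xi_i}\xi_i,\dot\gamma)\big|_0^\ell$), I obtain the exact identity
\[
\frac{d^2}{ds^2}\Big|_{s=0}L(\gamma_i^s)=g(\nabla_{\xi_i}\xi_i,\dot\gamma)\big|_0^\ell+\int_0^\ell\!\Big(\|\nabla_{\dot\gamma}\xi_i\|^2-g(\nabla_{\dot\gamma}\xi_i,\dot\gamma)^2-g(R(\xi_i,\dot\gamma)\dot\gamma,\xi_i)\Big)\,dt.
\]

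To pass to the inequality in the statement, the remaining step is to bound the crossed term $-\int_0^\ell g(\nabla_{\dot\gamma}\xi_i,\dot\gamma)^2\,dt$ by $-\big(g(\xi_i,\dot\gamma)|_0^\ell\big)^2$. The key observation is that, using $\nabla_{\dot\gamma}\dot\gamma=0$ again, $g(\xi_i,\dot\gamma)|_0^\ell=\int_0^\ell g(\nabla_{\dot\gamma}\xi_i,\dot\gamma)\,dt$, and Cauchy--Schwarz then gives $\big(g(\xi_i,\dot\gamma)|_0^\ell\big)^2\le \ell\int_0^\ell g(\nabla_{\dot\gamma}\xi_i,\dot\gamma)^2\,dt$. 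I expect this final Cauchy--Schwarz estimate to be the main technical point: producing the clean form $-(g(\xi_i,\dot\gamma)|_0^\ell)^2$ (without a $1/\ell$ factor) will rely on the variation being performed over a short enough arc (e.g.\ $\ell\le 1$, which is compatible with the eventual localization in Section~4), otherwise one has to keep the $1/\ell$ factor in the estimate. Modulo this normalization, chaining the identity above with the Cauchy--Schwarz bound yields \eqref{2nd variation}.
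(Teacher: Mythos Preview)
Your argument is essentially the same as the paper's: the paper packages the computation through the energy functional $E(\gamma_i^s)=\tfrac12\int_0^\ell g(\dot\gamma_i^s,\dot\gamma_i^s)\,dt$, obtaining $L'|_{s=0}=E'|_{s=0}$ and $L''|_{s=0}=E''|_{s=0}-\tfrac14\int_0^\ell(\partial_s g(\dot\gamma_i^s,\dot\gamma_i^s))^2|_{s=0}\,dt$, and then quotes the standard first/second variation of energy formulas for the remaining terms. Your direct computation of $\partial_s\|T\|$ and $\partial_s^2\|T\|$ unwinds exactly the same identities and lands on the same exact second variation formula before the inequality step.

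Your concern about the $1/\ell$ factor in the final Cauchy--Schwarz step is legitimate, and the paper's proof uses the very same estimate: it bounds $-\tfrac14\int_0^\ell(\partial_s g)^2\,dt\le -(E'|_{s=0})^2$, which is precisely the inequality $\big(\int_0^\ell u\,dt\big)^2\le \ell\int_0^\ell u^2\,dt$ with $u=g(\nabla_{\dot\gamma}\xi_i,\dot\gamma)$, and so implicitly also carries a $1/\ell$ (or an assumption $\ell\le 1$). In the applications in Section~4 this is harmless because the first variation is $O(\varepsilon)$ and the squared term is absorbed in the error; your remark that the clean form of \eqref{2nd variation} relies on such a normalization is accurate and, if anything, more explicit than the paper's presentation.
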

\begin{proof}
Parametrizing $\gamma$ by arc-length and calling $E$ the energy of $\gamma_i^s$, we have
\begin{align*}
\frac{d}{ds}\Big|_{s=0}L(\gamma_i^s) &= \frac{1}{2} \int_0^\ell \left. \left(\frac{1}{ g(\dot\gamma_i^s,\dot\gamma_i^s)^\frac{1}{2}} \frac{d}{ds} g(\dot{\gamma}_i^s,\dot{\gamma}_i^s) \, \right) \right|_{s=0} dt \\
&= \frac{1}{2} \int_0^\ell \left. \frac{d}{ds}\right|_{s=0} g(\dot{\gamma}_i^s,\dot{\gamma}_i^s) \, dt = \frac{d}{ds}\Big|_{s=0}E(\gamma_i^s)\\
\frac{d^2}{ds^2}\Big|_{s=0}L(\gamma_i^s) &= \frac{1}{2} \int_0^\ell \left. \frac{d}{ds} \left(\frac{1}{ g(\dot\gamma_i^s,\dot\gamma_i^s)^\frac{1}{2}} \frac{d}{ds} g(\dot{\gamma}_i^s,\dot{\gamma}_i^s) \, \right) \right|_{s=0} dt\\
&= -\frac{1}{4} \int_0^\ell \left.  \left( \frac{d}{ds} g(\dot{\gamma}_i^s,\dot{\gamma}_i^s) \, \right)^2 \right|_{s=0} dt + \frac{1}{2} \int_0^\ell \left. \frac{d^2}{ds^2}\right|_{s=0} g(\dot{\gamma}_i^s,\dot{\gamma}_i^s) \, dt \\
&= -\frac{1}{4} \int_0^\ell \left.  \left( \frac{d}{ds} g(\dot{\gamma}_i^s,\dot{\gamma}_i^s) \, \right)^2 \right|_{s=0} dt + \frac{d^2}{ds^2}\Big|_{s=0}E(\gamma_i^s)\\
&\leq - \left( \frac{d}{ds}\Big|_{s=0}E(\gamma_i^s)\right)^2 + \frac{d^2}{ds^2}\Big|_{s=0}E(\gamma_i^s).
\end{align*}
Clearly, since $\gamma$ is a geodesic, then $\dot{\gamma}$ is parallel transported along $\gamma$, i.e. $\nabla_{\dot{\gamma}} \dot{\gamma} = 0$. 
By using 
\cite[Theorem 2.6.5]{Baer} together with the computations in the proof of \cite[Theorem 5.3.2]{Baer}, we obtain the result.
\end{proof}

\begin{rmk}\label{variazioni + e -}
Let now $\gamma^{-s}_i $ be the variations of $\gamma$ obtained with the variational vector fields $-\xi_i$ and consider the Taylor expansion of $L(\gamma^{(\cdot)}_i)$ with respect to $s$, namely
\begin{equation*}\label{Taylor exp}
L(\gamma^s_i) =  L(\gamma) + \frac{d} {ds}\Big|_{s=0}L(\gamma^s_i) \, s + \frac{1}{2}\frac{d^2}{ds^2}\Big|_{s=0}L(\gamma^s_i)\, s^2 + o(s^2).
\end{equation*}
Using \begin{equation*}
\frac{d}{ds}\Big|_{s=0}L(\gamma_i^s) + \frac{d}{ds}\Big|_{s=0}L(\gamma_i^{-s}) = 0 \quad \mbox{ and } \quad \frac{d^2}{ds^2}\Big|_{s=0}L(\gamma_i^s) = \frac{d^2}{ds^2}\Big|_{s=0}L(\gamma_i^{-s})
\end{equation*}
we can immediately conclude that 
$$  L(\gamma^s_i) + L(\gamma^{-s}_i) = 2L(\gamma) +  \frac{d^2}{ds^2}\Big|_{s=0}L(\gamma^s_i) s^2  + o(s^2)\,. $$
Moreover, if $h$ is an increasing, non-negative $C^2$-function  function, then 
 \begin{equation}\label{Taylor exp con h}
 \begin{aligned}
 h(L(\gamma^s_i)) =\, & h(L(\gamma)) + s h'(L(\gamma)) \frac{d} {ds}\Big|_{s=0}L(\gamma^s_i)  + \frac{s^2}{2} h'(L(\gamma)) \frac{d^2}{ds^2}\Big|_{s=0}L(\gamma^s_i)  +\\
  &\hspace{5cm}+ \frac{s^2}{2} h''(L(\gamma))\left(\frac{d} {ds}\Big|_{s=0}L(\gamma^s_i)\right)^2   + o(s^2)
\end{aligned}
\end{equation}
 and, similarly as before, we get
\begin{equation*}\label{utile con h}
\begin{aligned}
h(L(\gamma^s_i)) + h(L(\gamma^{-s}_i)) = 2h(L(\gamma)) + s^2 & \left[h''(L(\gamma))\left(\frac{d} {ds}\Big|_{s=0}L(\gamma^s_i)\right)^2 + \right. \\
&  \qquad \left. +  h'(L(\gamma))\frac{d^2}{ds^2}\Big|_{s=0}L(\gamma^s_i) \right]+ o(s^2).
\end{aligned}
\end{equation*}
\end{rmk}

\section{Simultaneous local variations on good normal neighborhoods}\label{sec:simultaneous}

This section is dedicated to the construction in Theorem~\ref{lemmone} of simultaneous local variations of geodesics between a point $q \in \M$ and its image $\T(q) \in \M$. 
Such variations are designed to behave as much as possible as those obtained by rigid translation in the Eucledian setting. 

For the sake of clarity we will describe briefly the construction:

\begin{enumerate}

\item Given $p$ and $\T(p)$ we consider an orthonormal frame in $p$ and its corresponding one in $\T(p)$, constructed by parallel transport on the geodesic connecting $p$ and $\T(p)$. Then we can consider small enough neighborhoods $\cU_p$ and $\T(\cU_p)$ such that in each neighborhood we have good control of the orthonormal frames defined via the exponential map from $p$ and $\T(p)$  ($V_i$ and $W_i$ respectively): for the details of the properties see Definition~\ref{small normal neighborhood} of $\varepsilon$-\emph{small normal neighborhood}.

\begin{center}
\includegraphics[scale=0.35]{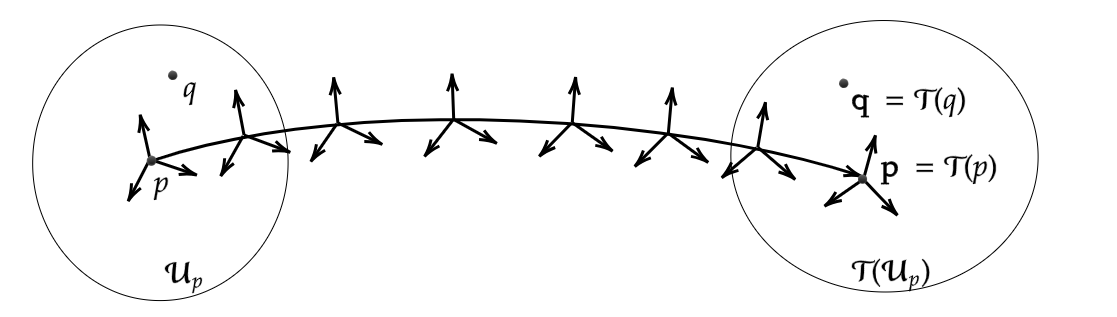}
\end{center}
\item For every $q \in \cU_p$ we can interpolate on the geodesic from $q$ to $\T(q)$ between the frames $V_i(q)$ and $W_i(\T(q))$. Notice that even though $\Pi_{\T(q)} V_i\neq W_i$ we still have $\Pi_{\T(q)} V_i \approx W_i$  (Lemma~\ref{lem:confrontoVW}) and so the interpolating frames $X_i$ are \emph{almost} parallel and \emph{almost} orthogonal. 

\begin{center}
\includegraphics[scale=0.35]{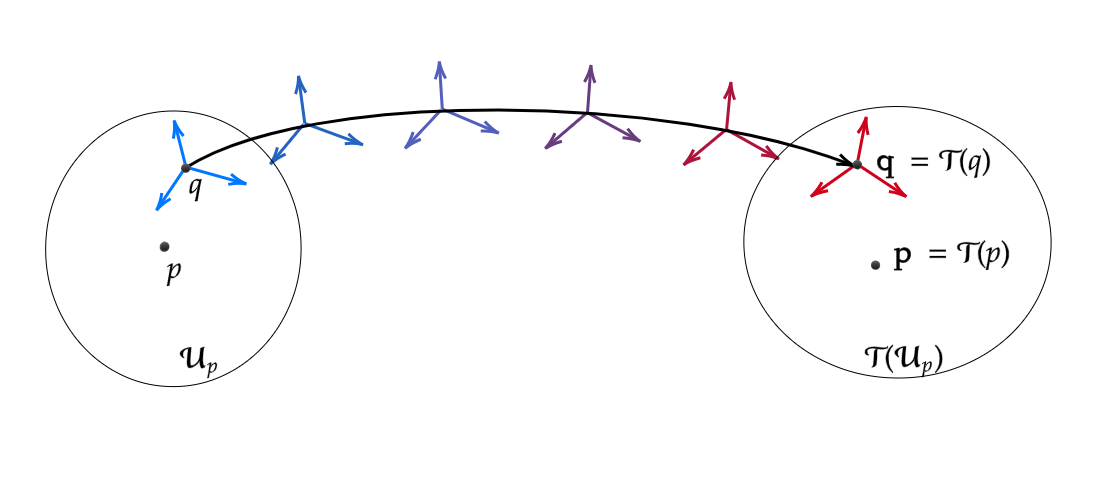}
\end{center}

\item Given $i \in \{ 1, \ldots n\}$ we can construct variations $\gamma_i^s$ of the geodesic $\gamma$ between $q$ and $\T(q)$ simply deforming the geodesic $\gamma$ through the flow of the vector field $X_i$, as shown below.
\begin{center}
\includegraphics[scale=0.35]{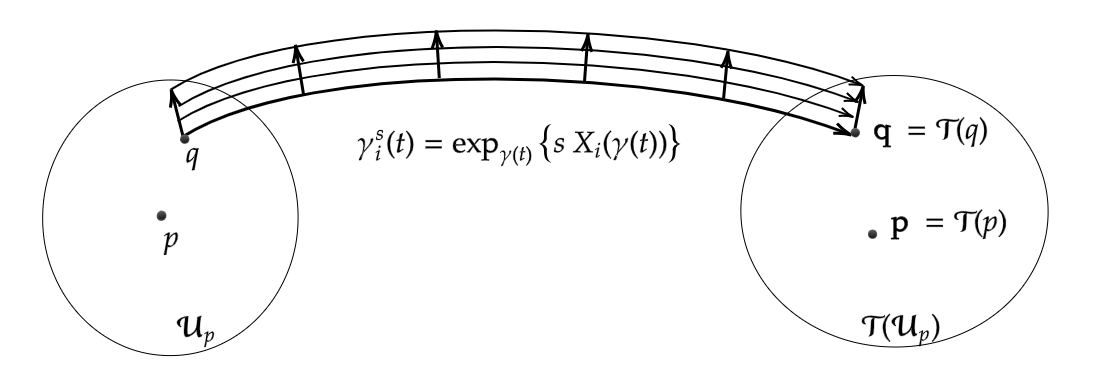}
\end{center}

\end{enumerate}

\subsection{Good normal neighborhood and $\varepsilon$-small normal neighborhood}

Now we introduce the notion of \emph{good normal neighborhood} of a point $p\in \M$. To this end, let $\cV_p\subseteq T_p\M$ be a neighborhood of $0\in T_p\M$ such that the exponential map is a diffeomorphism between $\cV_p$ and $\exp_p(\cV_p)=:\cN_p\subseteq \M$. We shall refer to $\cN_p$ as \emph{normal neighborhood} of $p$. 

Let $\cN_p$ be a normal neighborhood of $p$ and denote by $\Upsilon$ the isomorphism between $T_p\M$ and $\R^n$. Then we can use the exponential map $\exp_p: T_p\M\to\M $ to define a system of coordinates by
$$ (x_1,\dots,x_n):= (\exp\circ \Upsilon)^{-1} :  U_p\subset \R^n \to \cN_p\subset \M \,.$$ 
 We will refer to this system of coordinates as \emph{geodesic normal coordinates}. 
 This choice of coordinates induces a local coordinate frame $\{e_j\}:=\{\partial_{x_j}\}$ and the \emph{Christoffel symbols} of the Levi-Civita connection $\nabla$ are the $n^3$-functions defined by
$$ \nabla_{e_i}{e_j}=\sum_{k=1}^n \Gamma_{i,j}^k{e_k} \,. $$
 With the next proposition, we recall an important property of the geodesic normal coordinates. We refer to~\cite[Proposition 2.6.31]{Baer} for more details.
 \begin{prop}
 Let $p\in\cN_p\in\M$ and let $(x_1,\dots,x_n)$ be geodesic normal coordinates. Denote with $\Gamma_{ij}^k$ the Christoffel symbols  in geodesic normal coordinates. Then we have
$$ g(e_j,e_i)(p)=\delta_{ij} \qquad \Gamma_{ij}^k(p)=0\,, $$
where $\delta$ is the Kronecker delta. 
 \end{prop}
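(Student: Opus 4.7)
The proof plan has two independent parts, one for the metric identity and one for the vanishing of the Christoffel symbols at $p$.

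\textbf{Metric at $p$.} The idea is that the coordinate basis $\{e_j = \partial_{x_j}\}$ is, by construction, the image of the standard basis of $\R^n$ under the linear isomorphism $\Upsilon^{-1}$ composed with the differential of $\exp_p$ at $0 \in T_p\M$. A classical computation shows that $d(\exp_p)_0 = \mathrm{id}_{T_p\M}$ (one differentiates $\exp_p(tv) = \gamma_v(t)$ in $t=0$). Hence, under the evaluation at $p$, $e_j|_p$ corresponds to $\Upsilon^{-1}(\hat{e}_j) \in T_p\M$. Since $\Upsilon$ is the isomorphism identifying $T_p\M$ with $\R^n$ used to define the chart, one takes it to be an isometry sending an orthonormal basis of $T_p\M$ to the standard basis of $\R^n$ (this is the content of choosing geodesic \emph{normal} coordinates). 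Therefore $g(e_i,e_j)(p) = \delta_{ij}$.

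\textbf{Vanishing of $\Gamma_{ij}^k$ at $p$.} The key observation is that in geodesic normal coordinates, the radial line $t \mapsto (tv_1,\dots,tv_n)$ represents the geodesic $\exp_p(tv)$ for any $v \in T_p\M$ (for $t$ small enough). Substituting into the geodesic equation
\begin{equation*}
\ddot{\gamma}^k(t) + \Gamma_{ij}^k(\gamma(t))\, \dot{\gamma}^i(t)\, \dot{\gamma}^j(t) = 0,
\end{equation*}
and using that the coordinate acceleration $\ddot{\gamma}^k$ vanishes identically along a radial line while $\dot{\gamma}^i = v^i$ is constant, we obtain $\Gamma_{ij}^k(\gamma(t))\, v^i v^j = 0$ for all small $t$ and all $v \in \R^n$. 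Evaluating at $t = 0$ gives $\Gamma_{ij}^k(p)\, v^i v^j = 0$ for every $v \in \R^n$. Since the Levi-Civita connection is torsion-free, the Christoffel symbols are symmetric in $i,j$; therefore the quadratic form $v \mapsto \Gamma_{ij}^k(p) v^i v^j$ is the full symmetrization, and vanishing of this quadratic form for all $v$ together with the symmetry forces $\Gamma_{ij}^k(p) = 0$ (polarization identity).

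\textbf{Expected obstacle.} There is no real obstacle: both facts are standard and essentially force themselves on us through the definition of normal coordinates and the behaviour of the exponential map at the origin. The only point to be careful about is making explicit that $\Upsilon$ is chosen as an isometry, which is implicit in the adjective ``normal'' attached to these coordinates, and that the restriction of a tensor field to a curve commutes with evaluation so that $\Gamma_{ij}^k(\gamma(t)) v^i v^j = 0$ along the whole radial segment rather than only at $p$ (though only the value at $p$ is needed for the conclusion).
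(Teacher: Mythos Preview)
Your proof is correct and follows the standard textbook argument. The paper does not actually supply a proof of this proposition; it only refers the reader to \cite[Proposition 2.6.31]{Baer}, and your argument is precisely the classical one that appears there and in most differential geometry texts.
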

 
 \begin{cor}\label{cor:geod coor}
Let $\{e_j\}$ be a local frame for geodesic normal coordinates and define $\widetilde{e}_j\:=e_j/\|e_j\|$. 
Then for every $\varepsilon\in\R$, there exists a $\bar{\delta} = \bar{\delta}(\eps) \in \R$ such that for any $0 < \delta < \bar{\delta}$ the $n^3$-smooth function $ \tilde{\Gamma}_{ij}^k$ defined by
$\nabla_{\widetilde{e}_i}\widetilde{e}_j=\sum_{k=1}^n \widetilde{\Gamma}_{ij}^k \widetilde{e}_k$ satisfy
$$\| \tilde{\Gamma}_{ij}^k(q) \| \leq \varepsilon \qquad \text{ and }\qquad \| \nabla_{\widetilde{e}_l}\tilde{\Gamma}_{ij}^k(q) -  \nabla_{\widetilde{e}_l}\tilde{\Gamma}_{ij}^k(p) \| \leq \varepsilon\,. $$ 
for every $q$ in the geodesic ball $B_{\delta}(p)\subset \cN_p$ of radius $\delta$.
\end{cor}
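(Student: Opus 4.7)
The plan is to express the rescaled connection coefficients $\tilde{\Gamma}_{ij}^k$ as smooth functions of the original Christoffel symbols $\Gamma_{ij}^k$ and of the frame norms $f_j := \|e_j\| = g(e_j,e_j)^{1/2}$, check that $\tilde{\Gamma}_{ij}^k(p) = 0$, and then invoke continuity (on finitely many indices) to get both estimates on a small enough ball.

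First, using the Leibniz rule for the Levi-Civita connection together with $\nabla_{e_i} e_j = \sum_k \Gamma_{ij}^k e_k$, I would compute
\[
\nabla_{\tilde{e}_i}\tilde{e}_j \;=\; \frac{1}{f_i f_j}\,\nabla_{e_i}e_j \,-\, \frac{e_i(f_j)}{f_i f_j^2}\,e_j \;=\; \sum_{k=1}^n \frac{f_k\,\Gamma_{ij}^k}{f_i f_j}\,\tilde{e}_k \,-\, \frac{e_i(f_j)}{f_i f_j}\,\tilde{e}_j,
\]
from which I read off
\[
\tilde{\Gamma}_{ij}^k \;=\; \frac{f_k}{f_i f_j}\,\Gamma_{ij}^k \,-\, \frac{e_i(f_j)}{f_i f_j}\,\delta_{jk}.
\]
Since the metric $g$ is smooth and $f_j(p) = 1 > 0$, the functions $f_i, f_j, f_k$ are smooth and strictly positive in a neighborhood of $p$ in $\cN_p$, and $\Gamma_{ij}^k$ and $e_i(f_j)$ are smooth in normal coordinates. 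Hence each $\tilde{\Gamma}_{ij}^k$, and each directional derivative $\nabla_{\tilde{e}_l}\tilde{\Gamma}_{ij}^k = \tilde{e}_l(\tilde{\Gamma}_{ij}^k)$, is a smooth function on a neighborhood of $p$.

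Next I would evaluate at $p$ using the previous proposition: $g(e_i,e_j)(p) = \delta_{ij}$ yields $f_j(p) = 1$, and $\Gamma_{ij}^k(p) = 0$. Differentiating $f_j^2 = g(e_j,e_j)$ further gives
\[
e_i(f_j)(p) \;=\; \frac{g(\nabla_{e_i}e_j,\,e_j)}{f_j}(p) \;=\; \sum_{k} \Gamma_{ij}^k(p)\,g(e_k,e_j)(p) \;=\; 0.
\]
Plugged into the formula for $\tilde{\Gamma}_{ij}^k$ this yields $\tilde{\Gamma}_{ij}^k(p) = 0$ for all $i,j,k$.

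To conclude, the smooth scalar functions $q \mapsto \tilde{\Gamma}_{ij}^k(q)$ and $q \mapsto \nabla_{\tilde{e}_l}\tilde{\Gamma}_{ij}^k(q)$ are continuous at $p$; as the indices $(i,j,k,l)$ range over a finite set, I can intersect the finitely many neighborhoods produced by continuity to obtain $\bar{\delta}(\varepsilon) > 0$ with $B_{\bar{\delta}}(p) \subset \cN_p$ such that, for every $0 < \delta < \bar{\delta}$ and every $q \in B_\delta(p)$, both $\|\tilde{\Gamma}_{ij}^k(q)\| \le \varepsilon$ and $\|\nabla_{\tilde{e}_l}\tilde{\Gamma}_{ij}^k(q) - \nabla_{\tilde{e}_l}\tilde{\Gamma}_{ij}^k(p)\| \le \varepsilon$ hold. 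There is no substantial obstacle: the whole argument reduces to the identity $\tilde{\Gamma}_{ij}^k(p) = 0$, which in turn rests solely on the vanishing of the Christoffel symbols at $p$ in geodesic normal coordinates.
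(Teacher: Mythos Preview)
Your proof is correct and follows essentially the same approach as the paper: express the rescaled symbols $\tilde{\Gamma}_{ij}^k$ explicitly in terms of the ordinary Christoffel symbols $\Gamma_{ij}^k$ and the norms $\|e_j\|$, observe that the resulting expression is smooth and vanishes at $p$, and conclude by continuity. Your write-up is in fact more detailed than the paper's, which simply records a formula for $\tilde{\Gamma}_{ij}^k$ and then says ``since $\Gamma_{ij}^k$ are smooth we can conclude''; in particular you make explicit the verification that $\tilde{\Gamma}_{ij}^k(p)=0$ via $e_i(f_j)(p)=0$, which the paper leaves implicit.
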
 
 \begin{proof} 
By a straightforward computation, we can see that
$$ \widetilde{\Gamma}_{ij}^k = \left( \frac{1}{\|e_j\|} \partial_{x_i}\|e_j\|^2+ \frac{1}{\|e_i\|\|e_j\|}\Gamma_{ij}^{k} \right) \|e_k\| \,,$$ 
where $\Gamma^k_{ij}$ are the Christoffel symbols in geodesic normal coordinates. Since for every fixed $i,j,k$, $\Gamma^k_{ij}$ are smooth functions, then we can conclude.
 \end{proof}

Let now $\T$ be a diffeomorphism between two open subsets of $\M$. Then, whenever $\cN_p$ is a normal neighborhood of $p$, also $\T(\cN_p)$ is a {normal neighborhood} of $\T(p)$.
Based on this observation we can finally introduce the definition of $\varepsilon$-\emph{small normal neighborhood} with respect to the diffeomorphism $\T$.
In what follows we  will make use of the following notation: given a map $\T: \M \to \M$ and a vector field $X: T\M \to T\M$, then $\Pi_{\T(p)} X(p)$ will denote the parallel transport of $X(p)$ along the geodesic between $p$ and $\T(p)$.

\begin{defn}\label{small normal neighborhood}
Let $\Omega, \Delta$ be open subsets of $\M$ and $\T \in\textit{Diff}(\Omega,\Delta)$ with infinitesimal generator $\Xi$, a section of $T\M$ such that $\T(p)=\exp_p( \Xi(p))$. Let then
$\varepsilon >0$ and $p \in \Omega$ be fixed. We say that $\cU_p\subset \cN_p$ is an \emph{$\varepsilon$-small normal neighborhood of $p$ with respect to $\T$} if the following holds true:
\begin{enumerate}[label=(\roman*)]
\item  $\cU_p$ is open and $\cU_p$ and $\T(\cU_p)$ are $\varepsilon$-separated, that is 
\begin{equation}\label{eqn:separation} d(p',q') \geq 4 \varepsilon \qquad \forall p' \in \cU_p , q' \in \T(\cU_q)\end{equation}
in particular we have also  $\cU_p \cap \T(\cU_p)= \emptyset$;
\item  $diam(\cU_p), diam(\T(\cU_p)) \leq \varepsilon$;
\item there exists $\delta < \bar{\delta}/2$, being $\bar{\delta}= \bar{\delta}(\varepsilon)$ the one provided by Corollary \ref{cor:geod coor}, such that the $\delta$-tubular neighborhood of $\cU_p$ is contained in the geodesic ball $B_{\bar{\delta}}(p)$ and the $\delta$-tubular neighborhood of $\T(\cU_p)$ is contained in $B_{\bar{\delta}}(\T(p))$;
\item it is possible to define two local frames for geodesic normal coordinates, $\{e_j\}$ on $\cU_p$ and $\{f_j\}$ on $\T(\cU_p)$, and the corresponding modifications $\widetilde{e}_j = e_j / \| e_j \|$ and $\widetilde{f}_j = f_j / \| f_j \|$, such that 
\begin{equation*}
\| \tilde{\Gamma}_{ij}^k(q) \| + \| \tilde{\Gamma}_{ij}^k(\T(q)) \| \leq \varepsilon \; \text{ and }\; \| \nabla_{\widetilde{e}_l}\tilde{\Gamma}_{ij}^k(q) -  \nabla_{\widetilde{e}_l}\tilde{\Gamma}_{ij}^k(p) \| + \| \nabla_{\widetilde{f}_l}\tilde{\Gamma}_{ij}^k(\T(q)) -  \nabla_{\widetilde{f}_l}\tilde{\Gamma}_{ij}^k(\T(p)) \|  \leq \varepsilon
\end{equation*}
for every $q \in \cU_p$, where, with a slight abuse of notation, we use the same notation $\tilde{\Gamma}_{ij}^k$ to denote the symbols on both the sets $\cU_p$ and $\T(\cU_p)$;
\item  $\widetilde{e}_1 (p)=  (\Xi/ \| \Xi \|)(p)$ and moreover $\widetilde{f}_i(\T(p))= \Pi_{\T(p)}\widetilde{e}_i(p)$ for all $i=1, \ldots, n$;
\item  for every $q \in \cU_p$ we have
\[ |(\Xi/ \| \Xi \|)(q) - (\Xi/ \| \Xi \|)(p)| < \varepsilon  \; \text{ and } \; |\Pi_{\T(q)}(\Xi/ \| \Xi \|)(q) - \Pi_{\T(p)} (\Xi/ \| \Xi \|)(p)| < \varepsilon \]
\item $\Xi$ is Lipschitz continuous in $\cU_p$.
\end{enumerate}
\end{defn}

\begin{lemma}\label{lemma: esiste sempre small normal neighborhood}  If $\T \in \textit{Diff}(\Omega,\Delta)$ is a diffeomorphism with infinitesimal generator $\Xi$, then every non-fixed point of $\T$ admits an $\varepsilon$-small normal neighborhood with respect to $\T$ inside $\Omega$  for any $\varepsilon >0$ small enough. In particular it is sufficient that $\varepsilon \leq C d(p, \T(p))$.
\end{lemma}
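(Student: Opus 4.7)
Fix a non-fixed point $p \in \Omega$, let $R := d(p,\T(p)) > 0$, and fix any $\varepsilon > 0$ with $\varepsilon \leq R/8$ (which will give the sufficiency $\varepsilon \le C\,d(p,\T(p))$ for $C=1/8$). The plan is to first build the two frames canonically from the data at $p$ and $\T(p)$, and then shrink the neighborhood around $p$ until all seven conditions of Definition~\ref{small normal neighborhood} hold simultaneously; each condition is open in the radius of the ball around $p$, so the existence of a common admissible radius is immediate once we check that each is attainable separately.

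\emph{Frame construction (for (iv)--(v)).} Choose an orthonormal basis $\{v_j\}_{j=1}^n$ of $T_p\M$ with $v_1 = \Xi(p)/\|\Xi(p)\|$, and define $w_j := \Pi_{\T(p)} v_j$. Since parallel transport along the geodesic from $p$ to $\T(p)$ is a linear isometry, $\{w_j\}$ is an orthonormal basis of $T_{\T(p)}\M$. Use $\{v_j\}$ and $\{w_j\}$ to set up geodesic normal coordinates on normal neighborhoods $\cN_p$ and $\cN_{\T(p)} \supset \T(\cN_p \cap \Omega)$, yielding the coordinate frames $\{e_j\}$ on $\cN_p$ and $\{f_j\}$ on $\cN_{\T(p)}$. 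By construction $\widetilde{e}_j(p) = v_j$, hence $\widetilde{e}_1(p) = (\Xi/\|\Xi\|)(p)$, and $\widetilde{f}_j(\T(p)) = w_j = \Pi_{\T(p)} \widetilde{e}_j(p)$, giving (v). Applying Corollary~\ref{cor:geod coor} at $p$ and at $\T(p)$ (with tolerance $\varepsilon/2$), we obtain a radius $\bar\delta = \bar\delta(\varepsilon) > 0$ on which (iv) holds.

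\emph{Shrinking the neighborhood (for (i)--(iii), (vi), (vii)).} Take $\cU_p := B_{\delta_0}(p)$ with $\delta_0 > 0$ to be fixed. Since $\T$ is a diffeomorphism (hence continuous) with $\T(p) \neq p$, by continuity of the distance the minimum of $d(p',q')$ for $p' \in \overline{\cU_p}$, $q' \in \overline{\T(\cU_p)}$ tends to $R$ as $\delta_0 \downarrow 0$; choosing $\delta_0$ so that this minimum exceeds $R/2 \geq 4\varepsilon$ secures (i). The diameters of $\cU_p$ and $\T(\cU_p)$ tend to $0$ with $\delta_0$ by continuity of $\T$, which together with a further reduction to guarantee that both $\delta$-tubular neighborhoods (for some $\delta < \bar\delta/2$) sit inside $B_{\bar\delta}(p)$ and $B_{\bar\delta}(\T(p))$ respectively, gives (ii) and (iii). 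Condition (vi) follows from continuity of the unit vector field $\Xi/\|\Xi\|$ near $p$ (note $\Xi(p)\neq 0$) and from continuity of the map $q \mapsto \Pi_{\T(q)}\bigl(\Xi(q)/\|\Xi(q)\|\bigr)$, which is continuous because $\T$ and parallel transport depend continuously on the base point and the endpoint. Finally (vii) is immediate: $\Xi = \exp^{-1}(\T(\cdot))$ is smooth on $\Omega \setminus \mathrm{Fix}(\T)$ as a composition of smooth maps (using that $\T(q)$ stays away from the cut locus of $q$ for $q$ close enough to $p$, since $\T(p)$ is not in the cut locus of $p$ by the diffeomorphism assumption), hence Lipschitz on the closure of $\cU_p$.

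Each of these constraints on $\delta_0$ is of the form ``$\delta_0$ less than some positive quantity'', and the quantities depend only on $\varepsilon$, on $p$, on $\T$, and on the already-chosen frames, not on each other. Taking $\delta_0$ smaller than the minimum of all of them produces an $\varepsilon$-small normal neighborhood. The dominating constraint is (i), which is binding as soon as $4\varepsilon < R$; this is precisely the quantitative content ``$\varepsilon \leq C\,d(p,\T(p))$'' with $C=1/8$ (any $C<1/4$ works). The main (minor) obstacle to double-check is (vii), namely that $\Xi$ is genuinely smooth near $p$; this uses the fact that $\T(p)$ is not conjugate to $p$, which is ensured since $\T$ is a diffeomorphism and we can choose the neighborhoods so that the geodesic joining each $q \in \cU_p$ to $\T(q)$ remains the unique minimizer, hence outside the cut locus.
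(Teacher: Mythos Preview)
Your proof is correct and follows essentially the same approach as the paper's: construct the two normal-coordinate frames at $p$ and $\T(p)$ with the right orientation (securing (iv)--(v)), then shrink the radius until the remaining conditions hold, each of which is open. The paper is terser---it simply asserts that $\Xi$ is smooth on $\Omega$ rather than arguing via the cut locus as you do---but the skeleton is identical, and your explicit verification that the binding constraint is (i) (yielding the quantitative $\varepsilon \le C\,d(p,\T(p))$) is a welcome addition that the paper leaves implicit.
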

\begin{proof}
We aim to show that it is always possible to find a proper set of geodesic normal coordinates in $\cU_p$ and $\T(\cU_p)$ so that the requests of Definition \ref{small normal neighborhood} are satisfied. Let $p$ be a non-fixed point for $\T$ and $\bar{\delta}$ be the parameter provided by Corollary \ref{cor:geod coor}, then it is always possible to find $\delta < \bar{\delta}$ such that the geodesic balls $B_\delta(p)$ and $B_\delta(\T(p))$ are disjoint in $\M$, and in fact $\varepsilon$-separated. Up to further decrease $\delta$, conditions (i), (ii), (iii) of Definition \ref{small normal neighborhood} are clearly satisfied. Moreover, being the vector field $\Xi$ smooth on $\Omega$, also conditions (vi) and (vii) of Definition \ref{small normal neighborhood} must be true up to further decrease $\delta$.
Finally, observe that a rigid rotation inside $\R^n$ does not affect the notion of geodesic normal coordinates. As a consequence, it is always possible to find a system of geodesic normal coordinates inside $\cU_p$ so that $e_1(p)$ is parallel to $\Xi(p)$, thus $\widetilde{e}_1(p) = \Xi(p) / \| \Xi(p)\|$. In a similar way, we can find a system of geodesic normal coordinates inside $\T(\cU_p)$ so that $f_i(\T(p)) = \Pi_{\T(p)} e_i(p)$ (notice that orthogonality is preserved by parallel transport). Thanks to the estimates of Corollary \ref{cor:geod coor}, we conclude the proof. 
\end{proof}

\begin{rmk}\label{rmk:fixed}
Notice that a fixed point of $\T$ does not have a $\varepsilon$-small normal neighborhood. However, it is still possible to have properties (ii) to (vii) constructing a local normalized frame $\{\tilde{e}_i\}$ around $p$ on a normal neighborhood $\cN_p$ and then letting $\tilde{f}_i=\tilde{e}_i$: the conclusion follows taking a small enough neighborhood $\cU_p$.
\end{rmk}

In what follows, whenever $\cU_p$ is a $\varepsilon$-small normal neighborhood of $p$ with respect to $\T$, we will always denote $V_i(q) = \widetilde{e}_i(q)$ and $W_i(\T(q)) = \widetilde{f}_i(\T(q))$ for all $q \in \cU_p$ and $i \in \{1, \ldots, n\}$. We recall that $V_1(p) = (\Xi / \| \Xi \|)(p)$, while $W_i(\T(p))= \Pi_{\T(p)}V_i(p)$.

\begin{lemma}[Generalized Berger's Lemma]\label{rmk: per concludere il lemmone}
Let $(\M,g)$ be a $C^2$ compact Riemannian manifold and let $\tilde{K}$ be a uniform bound for the sectional curvatures. Then then for any $p \in \M$ and $u,v,w,z\in T_p \M$ one has 
\[  |g(R(u,v)w,z)|  \leq 7 \tilde{K} \|u\| \,\|v\| \,\|w\| \,\|z\|;  \]
By the arbitrariness of $z$ we also have $\|R(u,v)w\| \leq  7 \tilde{K} \|u\| \,\|v\| \,\|w\| $.
\end{lemma}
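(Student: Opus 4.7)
The plan is to run a classical Berger-type polarization argument, reducing the general bound on $R$ to the diagonal sectional-curvature bound.

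The starting point is the definition of sectional curvature: for any $u,v \in T_p\M$,
\[
|g(R(u,v)v,u)| \leq \tilde{K}\bigl(\|u\|^2 \|v\|^2 - g(u,v)^2\bigr) \leq \tilde{K}\,\|u\|^2 \|v\|^2.
\]
This handles the ``diagonal'' quadrilinear form, and everything else will be extracted from it by successive polarizations, using the standard symmetries of the Riemann tensor, namely the antisymmetry in each pair of slots and the pair-swap symmetry $g(R(a,b)c,d) = g(R(c,d)a,b)$.

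First I polarize in the two middle entries. Combining the antisymmetry in the last two slots with the pair symmetry shows that the bilinear form $(v,w) \mapsto g(R(u,v)w,u)$ is actually symmetric in $v$ and $w$. Therefore
\[
g(R(u,v)w,u) = \tfrac12\bigl[g(R(u,v{+}w)(v{+}w),u) - g(R(u,v)v,u) - g(R(u,w)w,u)\bigr],
\]
and the diagonal bound gives $|g(R(u,v)w,u)| \leq \tfrac{\tilde{K}}{2}\|u\|^2\bigl(\|v{+}w\|^2 + \|v\|^2 + \|w\|^2\bigr)$. Since $g(R(u,v)w,u)$ is bilinear in $(v,w)$, the rescaling $v \mapsto \lambda v$, $w \mapsto \lambda^{-1}w$ leaves the left-hand side invariant; minimizing the right-hand side over $\lambda>0$ turns the sum of squared norms into a product of norms and yields an estimate of the form $|g(R(u,v)w,u)| \leq c_1 \tilde{K}\|u\|^2\|v\|\|w\|$ with an explicit $c_1$.

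Next I polarize in the outer two entries. By the pair symmetry together with the appropriate antisymmetries, $(u,z) \mapsto g(R(u,v)w,z)$ has a structure analogous to the one exploited in the previous step; applying the same expansion $g(R(u{+}z, v) w, u{+}z)$ and using the bound from Step~2 produces, after another homogeneity rescaling $u \mapsto \mu u$, $z \mapsto \mu^{-1} z$, the inequality $|g(R(u,v)w,z)| \leq c_2\,\tilde{K}\|u\|\|v\|\|w\|\|z\|$.

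The only remaining issue is book-keeping: the main obstacle is tracking the factors accumulated from each of the two polarizations (each contributing a factor of roughly $2$), from the minimization over the homogeneity parameter, and from the parallelogram identity used to bound $\|v{+}w\|^2$. A straightforward (non-optimal) accounting shows $c_2 \leq 7$, which suffices for our purposes; the final assertion $\|R(u,v)w\| \leq 7\tilde{K}\|u\|\|v\|\|w\|$ then follows by choosing $z = R(u,v)w/\|R(u,v)w\|$.
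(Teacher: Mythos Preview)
Your approach differs from the paper's: the paper expands $u,v,w,z$ in an orthonormal basis obtained by Gram--Schmidt from (a linearly independent subset of) $u,v,w,z$, so that the coefficient vectors have at most $1,2,3,4$ nonzero entries respectively; it then invokes the classical Berger lemma $|g(R(e_i,e_j)e_k,e_l)|\leq\tfrac43\tilde K$ for orthonormal vectors as a black box and sums, picking up the factor $1\cdot\sqrt2\cdot\sqrt3\cdot2\cdot\tfrac43\leq 7$. Your route is to reprove Berger's bound directly via successive polarizations from the sectional curvature.

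Your first polarization is fine: $(v,w)\mapsto g(R(u,v)w,u)$ is indeed symmetric, and the two-sided polarization plus rescaling gives $|g(R(u,v)w,u)|\leq \tilde K\|u\|^2\|v\|\|w\|$. The second step, however, has a real gap. The form $(u,z)\mapsto g(R(u,v)w,z)$ is \emph{not} symmetric in $(u,z)$: one has $R(u,v,w,z)=R(w,z,u,v)$ but in general $R(u,v,w,z)\neq R(z,v,w,u)$. Consequently, expanding $g(R(u{+}z,v)w,u{+}z)$ recovers only the symmetric part
\[
S(u,z)=\tfrac12\bigl(R(u,v,w,z)+R(z,v,w,u)\bigr),
\]
not $R(u,v,w,z)$ itself, and no amount of rescaling fixes this. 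The missing ingredient is the first Bianchi identity: from $R_{wzuv}+R_{zuwv}+R_{uwzv}=0$ one checks
\[
R(u,v,w,z)-R(z,v,w,u)=-R(z,u,w,v),
\]
so $R(u,v,w,z)=S_{v,w}(u,z)+\tfrac12 R(u,z,w,v)$; iterating once with the roles of $(v,z)$ swapped and solving gives
\[
R(u,v,w,z)=\tfrac43 S_{v,w}(u,z)+\tfrac23 S_{z,w}(u,v),
\]
hence $|R(u,v,w,z)|\leq 2\tilde K\|u\|\|v\|\|w\|\|z\|$, well under $7$. Without Bianchi the argument cannot close: algebraic tensors with the pair and skew symmetries but not Bianchi can have vanishing sectional curvatures yet be nonzero, so the two polarizations alone do not determine the full tensor. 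You should make the use of Bianchi explicit in the second step.
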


\begin{proof}
This can be understood as follows: Let $\{e_i\}$ an orthonormal basis for $T_p\M$ and we write 
$$ u = \|u\|\frac{u}{\|u\|}= \|u\| \sum_i \alpha_i e_i \qquad v = \|v\|\frac{v}{\|v\|}= \|v\| \sum_i \beta_i e_i $$  
$$w = \|w\|\frac{w}{\|w\|}= \|w\| \sum_i \rho_i e_i \qquad z = \|z\|\frac{z}{\|z\|}= \|z\| \sum_i \varrho_i e_i \,;$$
in particular, we have $\sum_i \alpha^2_i= \sum_i \beta_i^2=\sum_i \rho_i^2 =\sum_i\varrho_i^2 =1$; moreover, letting the orthonormal basis be obtained by Graham-Schmidt applied to a basis which starts with a properly selected linearly independent subset of $u,v,w,z$, we can assume $\alpha, \beta, \rho, \varrho$ have respectively at most one, two, three and four nonzero components. Then by linearity and using Berger's lemma (see e.g.\cite{Karcher})
\begin{align*}
|g(R(u,v)w,z) | &= \big|g( R(\|u\|\frac{u}{\|u\|},\|v\|\frac{v}{\|v\|})\|w\|\frac{w}{\|w\|},\|z\|\frac{z}{\|z\|} ) \big| \\
&=\|u\| \,\|v\| \,\|w\| \,\|z\| \sum_{ijkl}\alpha_i\beta_j\rho_k\varrho_l  \big|g(R(e_i,e_j)e_k,e_l) \big|\\
& \leq  \alpha_1 ( \beta_1+ \beta_2) ( \rho_1 + \rho_2 + \rho_3) ( \varrho_1 + \varrho_2 + \varrho_3+ \varrho_4)4/3 \tilde{K} \|u\| \,\|v\| \,\|w\| \,\|z\|\, \\
& \leq 1\cdot \sqrt{2} \cdot \sqrt{3} \cdot \sqrt{4} \cdot 4/3 \tilde{K} \|u\| \,\|v\| \,\|w\| \,\|z\|\, \leq 7\tilde{K} \|u\| \,\|v\| \,\|w\| \,\|z\|.
\end{align*} 
This concludes the proof.
\end{proof}

As a direct consequence of the generalized Berger's lemma, if $\cU_p$ is an $\eps$-small normal neighborhood of $p$ with respect to $\T$, using Corollary \ref{cor:geod coor}, we get
\[  |g( R(V_i,V_j) V_k, V_j)| \leq 2\tilde{K}  \qquad \mbox{ and } \qquad   |g(V_j, V_1 - \Xi/ \| \Xi \| )| < \varepsilon   \mbox{ for all $i,j,k \in \{ 1 \ldots n\}$.} \]
Moreover, analogous estimates hold inside $\T(\cU_p)$
\[  |g( R(W_i,W_j) W_k, W_j)| \leq 2\tilde{K} \qquad \mbox{ and } \qquad |g(W_j, W_1 - \Pi(\Xi)/ \| \Xi \| )| < \varepsilon   \mbox{ for all $i,j,k \in \{ 1 \ldots n\}$,}    \]
where the field $\Pi(\Xi) $ is, as usual, obtained by parallel transport along the geodesics between $q$ and $\T(q)$ for every $q \in \mathcal{U}_p$.

\subsection{Construction of perturbed geodesics}

We can now state the main result of this section.

\begin{thm}\label{lemmone}
Let $\M$ be a compact Riemannian manifold with the Ricci curvature bounded from below by $K$, and let again $\tilde{K}$ be a uniform bound for the sectional curvatures.

Let  $\varepsilon > 0$ and $\T \in \textit{Diff}(\Omega,\Delta)$ be a diffeomorphism with infinitesimal generator $\Xi$. For any $p$, consider $\cU_p$ its $\varepsilon$-small normal neighborhood with respect to $\T$ and the respective frames $\{V_i\} \in \Gamma(F_O \cU_p)$ and $\{W_i\} \in \Gamma(F_O \T(\cU_p))$ provided by Definition \ref{small normal neighborhood} (or Remark \ref{rmk:fixed} in case $p$ is a fixed point for $\T$). 
Then for every geodesic $\gamma$ connecting $q \in \cU_p$ to $\q := \T(q) \in \T(\cU_p)$ (with length $L(\gamma)=d(q,\q)$) there exists a family of variations $\{\gamma_i^s\}_i$ satisfying
\begin{equation}\label{eq:initial data}
\gamma_{i}^s(0)= \exp_{q}\big(sV_i(q)\big) \qquad \text{ and }\qquad \gamma_{i}^s(L(\gamma)) = \exp_{\q}\big(sW_i(\q)\big),
\end{equation}
such that 
\begin{equation}\label{stima lemmone sulle variazioni}
\sup_i \left| \frac{d}{ds} \Big|_{s=0} L(\gamma_{i}^s) \right| <  C \varepsilon,   \qquad  \sum_{i=2}^n \frac{d^2}{ds^2}\Big|_{s=0} L(\gamma_{i}^s) <  C \varepsilon - K  L(\gamma) \quad \mbox{ and  } \quad \frac{d^2}{ds^2}\Big|_{s=0} L(\gamma_{1}^s) <  2C \varepsilon ,
\end{equation}

where $C$ is a positive constant  depending only on $K, \tilde{K}$, the dimension and diameter of $\M$ and the Lipschitz constant of $\Xi$ on $\cU_p$.
\end{thm}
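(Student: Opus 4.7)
The plan is to build the variations by flowing $\gamma$ along a vector field $\xi_i$ that linearly interpolates, in a frame parallel-transported along $\gamma$, between $V_i(q)$ at $t=0$ and $W_i(\q)$ at $t=L(\gamma)$. Precisely, denoting by $P_t\colon T_q\M\to T_{\gamma(t)}\M$ the parallel transport along $\gamma$, we set
\[ \xi_i(t) := P_t\!\left( (1-t/L)\,V_i(q) + (t/L)\,P_L^{-1}W_i(\q) \right), \qquad \gamma_i^s(t) := \exp_{\gamma(t)}(s\,\xi_i(t)), \]
so that $\xi_i(0)=V_i(q)$, $\xi_i(L)=W_i(\q)$ and the endpoint condition \eqref{eq:initial data} is automatic. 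The whole argument rests on a single preliminary estimate: $\|P_L V_i(q) - W_i(\q)\| \leq C\eps$ for every $i$. This should follow from items (iv)--(vi) of Definition~\ref{small normal neighborhood}: both frames $\{V_i\}$ and $\{W_i\}$ are almost parallel in their respective neighborhoods (Christoffel symbols $O(\eps)$), they coincide exactly after parallel transport at $p$ by (v), and the Lipschitz control on $\Xi$ together with the diameter bound (ii) propagate this matching as $q$ varies in $\cU_p$.

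With this in hand, $\nabla_{\dot\gamma}\xi_i = L^{-1}P_t\bigl(P_L^{-1}W_i(\q)-V_i(q)\bigr)$, so $\int_0^L \|\nabla_{\dot\gamma}\xi_i\|^2\,dt \leq C\eps^2/L \leq C'\eps$ thanks to the separation $L\geq 4\eps$ from item (i) (the fixed-point case of Remark~\ref{rmk:fixed} being trivial since $\gamma$ collapses to a point). Formula \eqref{1st variation} then yields, using that $\dot\gamma$ is itself parallel,
\[ \tfrac{d}{ds}\Bigl|_{s=0}L(\gamma_i^s) = g(W_i(\q),\dot\gamma(L)) - g(V_i(q),\dot\gamma(0)) = g\bigl(W_i(\q) - P_L V_i(q),\,\dot\gamma(L)\bigr), \]
which is $O(\eps)$. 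Feeding this into \eqref{2nd variation}, the squared first-variation term is nonpositive and may be dropped; the boundary term $g(\nabla_{\xi_i}\xi_i,\dot\gamma)|_0^L$ is $O(\eps)$ because $\nabla_{V_i}V_i=\sum_k\tilde\Gamma_{ii}^k V_k$ has norm $\leq C\eps$ at $q$ by (iv) (and analogously at $\q$); and the kinetic integral $\int_0^L\|\nabla_{\dot\gamma}\xi_i\|^2\,dt$ is $O(\eps)$ by the estimate just above.

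Only the curvature integral contributes something of order $1$ in $\eps$. For $i=1$, $\xi_1$ is $O(\eps)$-close to $\dot\gamma$ along the whole geodesic (since $V_1(q)$ and $W_1(\q)$ are $O(\eps)$-close to $\Xi/\|\Xi\|$ at $q$ and $\q$ by (v)--(vi), and $\dot\gamma$ is parallel with initial direction $\Xi(q)/\|\Xi(q)\|$), so the generalized Berger Lemma~\ref{rmk: per concludere il lemmone} gives $|g(R(\xi_1,\dot\gamma)\dot\gamma,\xi_1)|\leq C\tilde K\eps$ pointwise, yielding the required $O(\eps)$ bound for the index $1$. For $i\geq 2$, the vectors $\{\xi_i(t)\}_{i=1}^n$ form an $O(\eps)$-orthonormal basis along $\gamma$ with $\xi_1\approx\dot\gamma$ (parallel transport preserves orthonormality and the starting frame $\{V_i(q)\}$ is orthonormal by construction), hence
\[ \sum_{i=2}^n g(R(\xi_i,\dot\gamma)\dot\gamma,\xi_i) = \ric(\dot\gamma,\dot\gamma) - g(R(\xi_1,\dot\gamma)\dot\gamma,\xi_1) + O(\tilde K\eps) \geq K - C\tilde K\eps, \]
by Berger again; integration on $[0,L]$ and combination with the previous contributions produces the announced $C\eps - K L(\gamma)$. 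The main technical obstacle is precisely the preliminary comparison $\|P_L V_i(q)-W_i(\q)\| \leq C\eps$: one must carefully track parallel transport against two different normal frames defined on neighborhoods whose diameter and separation are both of order $\eps$, so that second-order cross effects between the Levi-Civita connection and the two local frames stay uniformly controlled.
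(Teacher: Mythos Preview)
Your approach is essentially the same as the paper's: Proposition~\ref{prop:rotazione} constructs exactly the same interpolated variation fields (with a general cutoff $\eta$ in place of your $t/L$), and the key preliminary estimate you single out is precisely Lemma~\ref{lem:confrontoVW}. Your sketch of why that estimate should hold is on the right track but incomplete: the paper's argument is a Jacobi-field computation along the one-parameter family of geodesics $\gamma_s$ from $\sigma(s)$ to $\T(\sigma(s))$ (where $\sigma$ is the geodesic from $p$ to $q$), first bounding $\|\partial_s\gamma_s\|$ via a Gr\"onwall argument that uses the Lipschitz bound on $\Xi$, and then comparing the transported frame to $W_i$ using the Riemann tensor and Berger's lemma. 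The Christoffel-symbol bounds alone do not suffice, because the parallel transports $\Pi_{\T(p)}$ and $\Pi_{\T(q)}$ are along \emph{different} geodesics of length of order~$1$, not of order~$\eps$.

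There is one genuine gap: your handling of the fixed-point case is wrong. When $p=\T(p)$, the geodesic $\gamma$ from $q$ to $\T(q)$ does \emph{not} collapse --- $q$ need not be fixed by $\T$ --- but the separation condition (i) of Definition~\ref{small normal neighborhood} is unavailable, so you cannot bound $C\eps^2/L$ by $C'\eps$. The paper's remedy (second part of Lemma~\ref{lem:confrontoVW}, cf.\ Remark~\ref{rmk: punti fissi di T}) is that in this case $V_i=W_i$ and the whole geodesic $\gamma$ lies inside $\cU_p$, so the Christoffel-symbol bound directly gives the \emph{improved} estimate $\|P_L V_i(q)-W_i(\q)\|\leq C\eps\,L(\gamma)$; the extra factor of $L$ cancels the $1/L$ in $\|\nabla_{\dot\gamma}\xi_i\|$ and the rest of the argument goes through.

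A minor correction: the boundary term $g(\nabla_{\xi_i}\xi_i,\dot\gamma)\big|_0^L$ in \eqref{2nd variation} actually vanishes, because your variation curves $s\mapsto\exp_{\gamma(t)}(s\,\xi_i(t))$ are geodesics for each fixed $t$. The bound on $\nabla_{V_i}V_i$ via the Christoffel symbols is irrelevant here, since that computes the covariant derivative for the \emph{ambient} extension of $V_i$ on $\cU_p$, not for the extension dictated by the variation.
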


The rest of this section is devoted to proving Theorem \ref{lemmone}. 
Let us observe that, given $q \in \cU_p$, it is not true in general that $V_1(q) = \dot{\gamma}(q)$ or $W_1(\T(q)) = \Pi_{\T(q)}V_1(q)$. Indeed, these identities are, a priori, only valid if $q=p$. The mismatch between $\dot{\gamma}(q)$ and $V_1$, and between $W_1(\T(q))$ and $\Pi_{\T(q)}V_1(q)$ comprises the main issue in the proof of Theorem \ref{lemmone}.
On the other hand, whenever $q$ satisfies both
\begin{equation}\label{eqn:casobello} V_1(q) = \dot{\gamma}(q) \quad \mbox{ and } \quad  W_1(\T(q)) = \Pi_{\T(q)} \dot{\gamma}(q)  
\end{equation}
then estimates \eqref{eq:initial data} and \eqref{stima lemmone sulle variazioni} would be straightforward. This lucky situation is investigated in the following Lemma.

\begin{lemma}\label{lem:caso bello}
Let $x,y\in\M$ and denote with $\gamma$ the geodesic 
$$\gamma:[\alpha,\beta]\to\M \qquad \text{ with } \;\gamma(\alpha)=x \text{ and } \gamma(\beta)=y\,.$$
Let $\{e_i (x)\}$ be an orthonormal basis of $T_x\M$ such that $e_1(x) = \dot{\gamma}(\alpha)$ and denote the 
parallel transport of $e_i(x)$ along $\gamma$ by $e_i(\gamma(t))=\Pi_{\gamma(t)} e_i(x)$. Finally, consider the smooth variation of $\gamma$ given by 
$$   f_i : [0,\delta]\times [\alpha,\beta]\to \M,\qquad (s,t)\mapsto f_i(s,t):= \exp_{\gamma(t)}(s\,e_i)
$$
where we omit the dependence of $e_i$ on the point on $\gamma(t)$
and set $\gamma_i^s(\cdot)=f_i(s,\cdot)$. Then we have
$$
\frac{d}{ds}\Big|_{s=0}L(\gamma_i^s) = 0 \qquad \text{ and } \qquad
 \frac{d^2}{ds^2}\Big|_{s=0}L(\gamma_i^s) =    -  \int_\alpha^\beta g\big( R(e_i,\dot{\gamma})  \dot{\gamma},e_i \big) \,dt  
$$
\end{lemma}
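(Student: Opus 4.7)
The plan is to read both variations off directly from Lemma~\ref{lem:variationslength}, using two special features of the construction: the variation vector field along $\gamma$ is $\xi_i(t) := \partial_s f_i(0,t) = e_i(\gamma(t))$, which is parallel along $\gamma$ by definition of $e_i$; and, for each fixed $t$, the transverse curve $s \mapsto f_i(s,t) = \exp_{\gamma(t)}(s\,e_i)$ is a geodesic, so that, extending $\xi_i$ off $\gamma$ by $\partial_s f_i$, one has $\nabla_{\xi_i}\xi_i \equiv 0$, and in particular at the endpoints $t=\alpha,\beta$.

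First I would apply \eqref{1st variation}: $\frac{d}{ds}\big|_{s=0}L(\gamma_i^s) = g(\xi_i,\dot\gamma)\big|_\alpha^\beta$. Since parallel transport is an isometry and both $\xi_i = e_i$ and $\dot\gamma$ are parallel along $\gamma$, the function $t\mapsto g(e_i(\gamma(t)),\dot\gamma(t))$ is constant, so the boundary difference vanishes. The same remark yields $\frac{d}{ds}\big|_{s=0}\,g(\dot\gamma_i^s,\dot\gamma_i^s) = 2g(\nabla_{\dot\gamma}\xi_i,\dot\gamma) \equiv 0$, since $\nabla_{\dot\gamma}\xi_i = 0$.

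For the second variation, I would revisit the derivation of \eqref{2nd variation} in the proof of Lemma~\ref{lem:variationslength}. The only step producing an inequality there is the bound $-\tfrac14\int_0^\ell\bigl(\tfrac{d}{ds}\big|_{s=0}\|\dot\gamma_i^s\|^2\bigr)^2 dt \leq -\bigl(\tfrac{d}{ds}\big|_{s=0}E(\gamma_i^s)\bigr)^2$, and by the previous paragraph both sides are identically zero here. Therefore \eqref{2nd variation} holds with equality in our situation. Plugging in $\nabla_{\dot\gamma}\xi_i \equiv 0$, the vanishing boundary term $g(\xi_i,\dot\gamma)\big|_\alpha^\beta = 0$, and $g(\nabla_{\xi_i}\xi_i,\dot\gamma)\big|_\alpha^\beta = 0$ (from $\nabla_{\xi_i}\xi_i\equiv 0$), everything collapses to
\[
\frac{d^2}{ds^2}\Big|_{s=0}L(\gamma_i^s) \;=\; -\int_\alpha^\beta g\bigl(R(e_i,\dot\gamma)\dot\gamma,e_i\bigr)\,dt,
\]
as claimed.

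The only delicate point is recognizing that \eqref{2nd variation} degenerates to an equality in this parallel/geodesic case; every other identity is a direct consequence of $e_i$ being parallel along $\gamma$ and of the transverse $s$-curves being geodesics. A sanity check for $i=1$: then $f_1(s,t)=\gamma(t+s)$, so $L(\gamma_1^s) = L(\gamma)$ for all $s$ and both sides of the second identity vanish, consistently with $R(\dot\gamma,\dot\gamma)=0$.
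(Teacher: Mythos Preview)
Your proof is correct and follows essentially the same approach as the paper's: both identify $\nabla_{\dot\gamma}e_i=0$ (parallel transport) and $\nabla_{e_i}e_i=0$ (transverse curves are geodesics), then feed these into Lemma~\ref{lem:variationslength}. You are in fact slightly more explicit than the paper in pointing out \emph{why} the inequality \eqref{2nd variation} degenerates to an equality here --- namely, that the pointwise integrand $\tfrac{d}{ds}\big|_{s=0}\|\dot\gamma_i^s\|^2 = 2g(\nabla_{\dot\gamma}\xi_i,\dot\gamma)$ vanishes identically, so both sides of the only inequality step are zero --- whereas the paper simply asserts the final identity after recording the two vanishing covariant derivatives.
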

\begin{proof}
For any fixed $t\in[\alpha,\beta]$,  $f(\cdot,t)$ is a geodesic generated by the `starting' tangent vector $ e_i$.
By extending $\{e_i\}$ to these geodesics $f_i$ by parallel transport, we obtain
$$ \nabla_{\dot{\gamma}}\,e_i =0 \qquad \text{ and } \qquad \nabla_{e_i} e_i =0 \,.$$ 
By using that the parallel transport preserves the scalar product, i.e.
$$g\big( e_i (\gamma(t)),e_j (\gamma(t))\big)= g\big(\Pi_{\gamma(t)} e_i(x), \Pi_{\gamma(t)} e_j(x) \big) = g\big( e_i(x), e_j(x) \big) $$
and $\{e_i\}$ are orthonormal, by Lemma~\ref{lem:variationslength} we can conclude our proof.
\end{proof}

 As already observed, we cannot apply, in general, Lemma \ref{lem:caso bello} to the points of $\cU_p$. We want to understand now how much the equality \eqref{eqn:casobello} is violated.
 
\begin{lemma}\label{lem:confrontoVW} There exists a constant $C=C(Lip(\Xi|_{\cU_p}) , \tilde{K}, n)$ such that for every $q \in \cU_p$, letting $\textbf{q} =\T(q) \in \T(\cU_p)$, we have 
$$ \|W_i(\textbf{q}) - \Pi_{\T(q)}V_i \| \leq C  d(p,q) \qquad \forall i=1, \ldots, n.$$
In case $p$ is a fixed point for $\T$ we can suppose $V_i=W_i$ and
$$ \|W_i(\textbf{q}) - \Pi_{\T(q)}V_i \| \leq C\varepsilon d(q,\T(q)) \qquad \forall i=1, \ldots, n.$$
\end{lemma}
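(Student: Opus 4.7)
The plan is to reduce the comparison between the two frames at $q$ and $\mathbf{q}$ to three ingredients: (a) local change of each normalized frame along short geodesics, which is controlled by the Christoffel symbols being small in the $\varepsilon$-small normal neighborhoods, and (b) the holonomy commutator of parallel transport around the geodesic quadrilateral with vertices $p, q, \mathbf{q}, \T(p)$, which is controlled by the sectional curvature and the area of the loop.

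Concretely, I would insert intermediate parallel transports to write
\begin{align*}
W_i(\mathbf{q}) - \Pi_{\T(q)}V_i(q)
&= \bigl[W_i(\mathbf{q}) - \Pi_{\mathbf{q}}^{\T(p)\to\mathbf{q}} W_i(\T(p))\bigr] \\
&\quad + \bigl[\Pi_{\mathbf{q}}^{\T(p)\to\mathbf{q}} \Pi_{\T(p)}^{p\to\T(p)} V_i(p) - \Pi_{\mathbf{q}}^{q\to\mathbf{q}} \Pi_{q}^{p\to q} V_i(p)\bigr] \\
&\quad + \bigl[\Pi_{\mathbf{q}}^{q\to\mathbf{q}} \Pi_{q}^{p\to q} V_i(p) - \Pi_{\mathbf{q}}^{q\to\mathbf{q}} V_i(q)\bigr],
\end{align*}
using that $W_i(\T(p))=\Pi_{\T(p)}^{p\to\T(p)} V_i(p)$ by item (v) of Definition~\ref{small normal neighborhood}. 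Terms (I) and (III) compare a normalized coordinate frame with its own parallel transport along a short geodesic: using $\nabla_X \widetilde{f}_i=\sum_k \widetilde{\Gamma}^k_{ji}X^j \widetilde{f}_k$ (resp.\ the analogue for $\widetilde{e}_i$) together with the bound $|\widetilde{\Gamma}^k_{ji}|\leq\varepsilon$ on the $\delta$-tubular neighborhoods provided by items (iii)–(iv), integration along the relevant geodesic yields $\|\mathrm{(I)}\| \leq C\varepsilon \, d(\T(p),\mathbf{q})$ and $\|\mathrm{(III)}\|\leq C\varepsilon\, d(p,q)$. Since $\Xi$ is Lipschitz on $\cU_p$, $\T$ is Lipschitz as well, so $d(\T(p),\mathbf{q})\leq \mathrm{Lip}(\T|_{\cU_p})\,d(p,q)$ and both terms are bounded by $C\,d(p,q)$ with $C=C(\mathrm{Lip}(\Xi|_{\cU_p}),n)$.

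For the commutator term (II) I would build an explicit two-parameter family of geodesics $\sigma:[0,1]\times[0,L]\to\M$, interpolating between the geodesic from $p$ to $\T(p)$ (at $s=0$) and the geodesic from $q$ to $\mathbf{q}$ (at $s=1$), for instance by setting $\sigma(s,t)=\exp_{\eta(s)}(t\,v(s))$ where $\eta$ is the geodesic $p\to q$ and $v(s)$ is a smooth interpolation of the initial tangents. A standard computation (the variation of parallel transport is governed by the Riemann tensor) then gives
\[
\|\mathrm{(II)}\|\;\leq\;\int_0^1\!\!\int_0^L \bigl\|R(\partial_s\sigma,\partial_t\sigma)\,\Pi V_i(p)\bigr\|\,dt\,ds,
\]
which by Lemma~\ref{rmk: per concludere il lemmone} is bounded by $7\tilde K\int_0^1\!\int_0^L\|\partial_s\sigma\|\,\|\partial_t\sigma\|\,dt\,ds$. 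Jacobi field estimates on the compact manifold $\M$ give $\|\partial_s\sigma\|\leq C\,d(p,q)$ and $\|\partial_t\sigma\|\leq C$, with $L\leq \mathrm{diam}(\M)+C\,d(p,q)$, producing $\|\mathrm{(II)}\|\leq C\tilde K\,\mathrm{diam}(\M)\,d(p,q)$. Adding the three pieces yields the desired bound $\|W_i(\mathbf{q})-\Pi_{\T(q)}V_i(q)\|\leq C\,d(p,q)$.

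The fixed-point case is considerably easier: by Remark~\ref{rmk:fixed} we take $V_i=W_i$, and the quantity to estimate collapses to $V_i(\mathbf{q})-\Pi_{\mathbf{q}}^{q\to\mathbf{q}}V_i(q)$. The geodesic from $q$ to $\mathbf{q}=\T(q)$ lies in the $\delta$-tubular neighborhood of $\cU_p$ by item (iii), hence in $B_{\bar\delta}(p)$, so the normalized Christoffel symbols satisfy $|\widetilde{\Gamma}^k_{ji}|\leq\varepsilon$ along this geodesic; integrating $\nabla_{\dot\gamma}V_i$ along it immediately gives the bound $C\varepsilon\,d(q,\T(q))$.

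The main obstacle is the curvature/holonomy estimate for term (II): getting clean bounds on the partial derivatives $\partial_s\sigma,\partial_t\sigma$ via Jacobi field theory on a compact manifold with sectional curvature bound $\tilde K$, and verifying the identity expressing the commutator of parallel transports as an integral of the Riemann tensor over the two-parameter family. Everything else reduces to careful bookkeeping with Christoffel symbols and the Lipschitz bound on $\T$.
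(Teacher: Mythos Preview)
Your approach is correct and rests on the same three ingredients as the paper: the Christoffel bounds of item~(iv) for the normalized frames (your terms (I) and (III)), the identity expressing the variation of parallel transport via the Riemann tensor (your term (II)), and Jacobi-field growth estimates for the two-parameter family. The paper organizes these slightly differently: rather than the explicit three-term telescoping, it sets $h(s)=\Pi_{\gamma_s(1)}V_i(\sigma(s))-W_i(\T(\sigma(s)))$, where $\sigma$ is the geodesic $p\to q$ and $\gamma_s$ the geodesic from $\sigma(s)$ to $\T(\sigma(s))$, uses $h(0)=0$, and bounds $\|\partial_s h(s)\|$ directly; the three contributions to $\partial_s h$ are exactly your (I), (III) and (II). The paper's route is marginally cleaner for one reason: the endpoint curve $s\mapsto\T(\sigma(s))$ is already the $t=1$ side of the family, whereas in your decomposition term~(II) invokes the geodesic from $\T(p)$ to $\mathbf{q}$, which is \emph{not} the $t=L$ boundary of the natural two-parameter family (that boundary is $\T\circ\eta$), so you would need one extra small holonomy estimate inside $\T(\cU_p)$ to close it. The Jacobi bound itself is obtained in the paper by Gr\"onwall on $f_s(t)=\|J\|^2+\|\nabla_{\dot\gamma_s}J\|^2$, the key input being $\nabla_{\dot\gamma_s}J(s,0)=\tfrac{D}{ds}\Xi(\sigma(s))$, controlled by $\mathrm{Lip}(\Xi|_{\cU_p})\,d(p,q)$; this is precisely what gives the Lipschitz control on $\T$ that you invoke for term~(I), so it should not be treated as a separate black box. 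Your treatment of the fixed-point case matches the paper's.
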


\begin{proof}
Let us denote $\ell=d(p,\textbf{p})$. At first let us define the geodesic $\sigma:[0,1] \to \M$ connecting $p$ to $q$; then for every $s \in [0,1]$ we consider the geodesic $\gamma_s:[0,1] \to \M$ from $\sigma(s)$ to $\T(\sigma(s))$. Notice that $|\dot{\gamma}_s| = d(\sigma(s),\T(\sigma(s)))\leq d(p,q)+2 \varepsilon$. We then want to bound $J(s,t)=\partial_s \gamma_s(t)$. Since $J$ is a variation of geodesics, it satisfies the Jacobi equation
$$ \frac {D^2J}{dt^2} (s,t) + R(\dot{\gamma}_s(t) , J(s,t))\dot{\gamma}_s(t) =0. $$
Considering the function $f_s(t)=\| J(s,t)\|^2 + \|\nabla_{\dot{\gamma}_s} J(s,t)\|^2$, we get that 
\begin{align*} f'_s(t) &=2 \langle J(s,t), \nabla_{\dot{\gamma}_s} J(s,t) \rangle - 2 \langle \nabla_{\dot{\gamma}_s} J(s,t), R(\dot{\gamma}_s(t) , J(s,t))\dot{\gamma}_s(t)\rangle  \\
& \leq (2+2\tilde{K})|J(s,t)| \cdot | \nabla_{\dot{\gamma}_s} J(s,t)|  \\
& \leq (1+\tilde{K}) \cdot \Bigl(\| J(s,t)\|^2 + \|\nabla_{\dot{\gamma}_s} J(s,t)\|^2 \Bigr) = (1+\tilde{K}) f_s(t).
\end{align*}
In particular $f_s$ increases at most exponentially $f_s(t) \leq e^{(1+\tilde{K})t} f_s(0)$. Notice that $\dot{\gamma}_s(0)= \nabla \varphi (\gamma(s))$ and in particular 
$$  \nabla_{\dot{\gamma}_s} J(s,0)  =  \nabla_{\dot{\gamma}_s} \partial_s \gamma= \nabla_{\partial_s \gamma} \dot{\gamma}_s = \frac D{ds}   \Xi (\gamma(s)).$$
Given that $\Xi$ is $L$-Lipschitz in $\cU_p$ we obtain $| \nabla_{\dot{\gamma}_s} J(s,0)| \leq L |\partial_s \gamma_s(0)|=L d(p,q)$. Since $|J(s,0)|=|\partial_s \gamma_s(0)|=d(p,q)$ we have finally that $|J(t,s)| \leq \sqrt{f_s(t)} \leq C \cdot d(p,q)$ for every $t,s$. Now we can consider for every $s$ the transported field on $\gamma_s$ defined as $V_i(t,s)=\Pi_{\gamma_s(t)} (V_i(0,s))$.

Notice that we want to understand how far is $V_i(1,1)$ from $W_i(\textbf{q})$. In order to do this we consider $h(s)=V_i(1,s)-W_i(\gamma(1,s))$: notice that we wanto to estimate $h(1)$, and we know that $h(0)=0$ by definition of $W_i$ in $\gamma(1,0)=\textbf{p}$. In particular, we have 
$$  \|W_i(\textbf{q}) - \Pi_{\T(q)}V_i \| = |h(1)|\leq |h(0)| + \int_0^1| \partial_s h (s) | \, ds;$$
But now, using the properties of $W_i$ as a local frame for geodesic normal coordinates we have that in charts
$$ \|\nabla_VW_i\| = \Bigl\|\sum_{j,k=1}^n v_j\Gamma_{i,j}^kW_k \Big\| \leq n^2\|V\| \varepsilon \quad \text{for a generic } V = (v_1, \ldots, v_n);   $$
a similar estimate holds for $V_i$. In particular, we only now need to estimate 
$\partial_s V_i(1,s) = \nabla_{\partial_s \gamma} V_i(1,s)$, and in order to do so we use that

$$\nabla_{\partial_s \gamma} V_i(1,s) = \nabla_{\partial_s \gamma} V_i(0 ,s)+ \int_0^1 \nabla_{\dot{\gamma}_s} \nabla_{\partial_s \gamma} V_i(t ,s) \, dt.$$
In particular, using the Riemann tensor and that $V_i(t,s)$ is transported parallel along the curves $\gamma_s$ for every $s$, we get
 $$\int_0^1 \nabla_{\dot{\gamma}_s} \nabla_{\partial_s \gamma} V_i(t ,s) \, dt = \int_0^1  \nabla_{\partial_s \gamma}\nabla_{\dot{\gamma}_s} V_i(t ,s) + R(\partial_s \gamma, \dot{\gamma}_s)V_i(t,s) \, dt = \int_0^1 R(\partial_s \gamma, \dot{\gamma}_s)V_i(t,s) \, dt$$
Now by the (generalized) Berger's Lemma~\ref{rmk: per concludere il lemmone} we obtain that 
\begin{align*} \|\partial_s h(s)\| &\leq \| \nabla_{\partial_s \gamma} W_i\| +\|\nabla_{\partial_s \gamma} V_i(0 ,s)\| + \Bigl\| \int_0^1 R(\partial_s \gamma, \dot{\gamma}_s)V_i(t,s) \, dt \Bigr\| \\
& \leq n^2\varepsilon \| \partial_s \gamma(1,s)\|   + n^2\varepsilon \| \partial_s \gamma (0,s)\| + 4\tilde{K} \int_0^1 \| \partial_s \gamma \| \| \dot{\gamma}_s\| \| V_i(t,s)\| \, dt.
\end{align*}
At this point we can use that $\|\partial_s \gamma_s\| \leq C \cdot d(p,q)$, $\| \dot{\gamma}_s \| \leq d(p,\textbf{p}) + 2 \varepsilon$, $\|V_i(t,s)\|=1$, to conclude 
$$ \|W_i(\textbf{q}) - \Pi_{\T(q)}V_i \| \leq  \int_0^1 \| \partial_s h(s)\| \,ds  \leq C \cdot d(p,q)$$
for some constant $C$ depending only on $\cU_p$. 

As for the fixed point case, we know we can suppose $V_i=W_i$ and that the whole geodesic $\gamma$ between $q$ and $\T(q)$ is inside $\cU_p$. In particular we have $\|W_i(\textbf{q}) - \Pi_{\T(q)}V_i\|\leq \int_0^1 \| \nabla_{\dot{\gamma}(t)} V_i \| \, dt \leq n^2 \varepsilon L(\gamma) = n^2 \varepsilon d(q,\T(q))$.
\end{proof}

\begin{definition}[Quasi-orthonormality]\label{def:qob} Let $V$ be a $n$-dimensional Hilbert space and $0 \leq \sigma < 1/n$. We say that $\{X_i\}_i$ is a $\sigma$-orthonormal basis if $  | \langle X_i, X_j \rangle - \delta_{i,j} | \leq \sigma$.
\end{definition}

\begin{lemma} Let $\{X_i\}_i$ that be a $\sigma$-orhtonormal basis for $V$, and $n$-dimensional Hilbert space.
Then, as long as $\sigma n < 1/2$, there exists a dimensional constant $C_n$ such that for any linear map $A: V \to V$ it holds
\begin{equation}\label{eqn:stimatracciaA}
 \left| \tr(A)  - \sum_{i=1}^n \langle AX_i, X_i \rangle \right|  \leq C_n \sigma \|A\|.
\end{equation}
 \end{lemma}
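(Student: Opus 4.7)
The plan is to realize the sum $\sum_i \langle AX_i, X_i\rangle$ as the trace of $A$ composed with a self-adjoint operator close to the identity, and then bound the error via a standard trace/operator norm inequality.

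Fix an orthonormal basis $\{e_i\}_{i=1}^n$ of $V$ and let $T: V \to V$ be the linear map defined by $T e_i = X_i$. Then the Gram matrix of $\{X_i\}$ is precisely $M := T^* T$, and by the quasi-orthonormality assumption we can write $M = I + E$ where the entries of $E$ satisfy $|E_{ij}| \leq \sigma$. Since $\|E\|_{op} \leq \|E\|_F \leq n \sigma$, the assumption $n\sigma < 1/2$ ensures in particular that $M$ is invertible (so $T$ is invertible too). A direct computation gives
\[
\sum_{i=1}^n \langle A X_i, X_i\rangle = \sum_{i=1}^n \langle T^* A T e_i, e_i\rangle = \tr(T^* A T) = \tr(A \, T T^*).
\]

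Next I would observe that $T T^*$ and $T^* T = I + E$ have the same spectrum, so all eigenvalues of $T T^*$ lie in $[1 - n\sigma, 1 + n\sigma]$; since $TT^*$ is self-adjoint this yields $\|TT^* - I\|_{op} \leq n \sigma$. Combining the two displays we get
\[
\tr(A) - \sum_{i=1}^n \langle A X_i, X_i\rangle = \tr\bigl(A(I - T T^*)\bigr),
\]
and the elementary inequality $|\tr(AB)| \leq \|A\|_{op}\, \|B\|_1$, together with $\|B\|_1 \leq n\, \|B\|_{op}$ for an $n$-dimensional operator, yields
\[
\left|\tr(A) - \sum_{i=1}^n \langle A X_i, X_i\rangle\right| \leq \|A\|_{op}\, \|I - T T^*\|_1 \leq n \cdot n\sigma \, \|A\|_{op} = n^2 \sigma \, \|A\|.
\]
This gives \eqref{eqn:stimatracciaA} with the dimensional constant $C_n = n^2$.

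There is no real obstacle here: the argument is purely linear-algebraic, and the only point worth care is the uniform spectral estimate $\|TT^* - I\|_{op} \leq n\sigma$, which follows from the equality of spectra of $T^*T$ and $TT^*$ rather than any direct manipulation of $TT^*$ itself (writing $TT^*$ entrywise with respect to $\{X_i\}$ would be unwieldy because $\{X_i\}$ is not orthonormal). Any reasonable choice of matrix norm for bounding $\|E\|_{op}$ (Frobenius, or $\sqrt{\|E\|_1 \|E\|_\infty}$) delivers the same bound $n \sigma$, so the dimensional constant is robust to these choices.
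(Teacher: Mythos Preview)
Your proof is correct and in fact tighter than the paper's (you get $C_n = n^2$, the paper settles for $C_n = 4n^2$), but the two arguments are genuinely different in flavor. The paper never introduces the auxiliary map $T$ nor the operator $TT^*$; instead it works directly with the matrix $(A_{ij})$ of $A$ in the \emph{non-orthonormal} basis $\{X_i\}$, uses the identity $\tr(A)=\sum_i A_{ii}$ (valid for any basis), writes $\langle AX_i,X_i\rangle = \sum_j A_{ij}\langle X_j,X_i\rangle$, and is then left with bounding $\sum_{i,j}|A_{ij}|$. This is done by an entrywise triangle-inequality estimate, $|\langle AX_i,X_{i'}\rangle|\geq |A_{i,i'}|-\sigma\sum_j|A_{ij}|$, summed over $i'$ to control row sums. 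Your route is more conceptual---a single spectral observation ($T^*T$ and $TT^*$ share eigenvalues) plus the Schatten/H\"older inequality $|\tr(AB)|\le \|A\|_{op}\|B\|_1$---while the paper's is more hands-on and avoids any appeal to spectral theory or trace-class norms. Both are short and both deliver the same order $n^2\sigma\|A\|$; yours is arguably the cleaner linear-algebra argument, the paper's the more self-contained one.
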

 
 \begin{proof}
First of all the condition $\sigma < 1/n$ grants us that $X_i$ is in fact a basis thanks to the fact that the matrix  $X^TX$ is diagonally dominant and thus invertible. Now, consider any linear map $A:V \to V$ and let $A_{i,j}$ be the matrix of $A$ with respect to $X$. Notice that $|\langle AX_i , X_{i'} \rangle | = |\sum_j A_{i,j} \langle X_j, X_{i'} \rangle  |\geq |A_{i,i'} | - \sigma \sum_{j} | A_{i,j} |$; summing up this inequality on $i'$ we get
\begin{equation}\label{eqn:stimaoffdiagonalA} 
(1-n\sigma) \sum_j |A_{i,j}| \leq \sum_{j} |\langle AX_i , X_{j} \rangle| \leq n \| A\| (1+\sigma).
\end{equation}
In the end we can estimate $\tr(A)=\sum_i A_{i,i}$ using estimate \eqref{eqn:stimaoffdiagonalA} on the remainder:
$$
 \left| \tr(A)  - \sum_{i=1}^n \langle AX_i, X_i \rangle \right| = \left| \sum_{i,j} | A_{i,j} | |\langle X_i, X_j \rangle - \delta_{i,j} | \right| \leq \frac{\sigma (1+\sigma) n^2}{1-n\sigma} \|A\|,
$$
and so we can conclude, for example choosing $C_n=4n^2$.

\end{proof}

\begin{prop}\label{prop:rotazione}
Let $\M,\, \eps,\, p,\, \T,\,\cU_p,\, \{V_i\},\, \{W_i\}$ be as Theorem \ref{lemmone}. Consider $q \in \cU_p$, $\gamma$ the geodesic connecting $q$ to $\T(q)$ and, for simplicity, denote $\ell = L(\gamma)$.
Then it is possible to construct $\{X_i\}\in F_O\M|_\gamma$ for which the following holds:
\begin{itemize}
\item[(I)] $X_i(\gamma(0))=V_i(q)$  and $X_i(\gamma(\ell))=W_i(\T(q))$ for every $i\in\{1,\dots,n\}$.
\item[(II)] The variations of $\gamma$ given by
$$f_i : [-\delta,\delta]\times [0,\ell]\to \M,\qquad (s,t)\mapsto f_i(s,t) = \gamma_i^s(t) = \exp_{\gamma(t)}\Big(s\, X_i(\gamma({t}))\Big) 
$$
satisfy
$$ \frac{d}{ds}\Big|_{s=0}L(\gamma_i^s) = g(W_i(\T(q),\dot{\gamma}(\ell)) - g(V_i(q),\dot{\gamma}(0)) \quad $$
$$ \frac{d^2}{ds^2}\Big|_{s=0}L(\gamma_i^s) \leq  C^2\varepsilon - \int_{0}^{\ell} g\big( R(X_i,\dot{\gamma})  \dot{\gamma},X_i \big) \,d\tau  $$.
\item[(III)] Moreover, $X_i$ is a $2C\varepsilon$-orthonormal basis in the spirit of Definition \ref{def:qob}, that is it satisfies 
\begin{equation*} \label{eq:epsortho} |g(X_i(\gamma(t)),X_j(\gamma(t)))- \delta_{i,j} | \leq 2C\varepsilon \qquad \forall i,j \in \{1, \ldots, n\} , t \in [0,\ell] 
\end{equation*}

\end{itemize} 
\end{prop}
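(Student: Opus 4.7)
The strategy is to construct $X_i$ along $\gamma$ as the parallel transport of $V_i(q)$ twisted by a small rotation that compensates for the mismatch between $\Pi_{\q} V_i(q)$ and $W_i(\q)$ controlled in Lemma \ref{lem:confrontoVW}. Denote by $\Pi_t$ the parallel transport along $\gamma$ from $T_q\M$ to $T_{\gamma(t)}\M$, and work in the trivialization it induces, so that all tangent spaces along $\gamma$ are identified with $V := T_q\M$. Since $q \in \cU_p$ satisfies $d(p,q) \leq \varepsilon$ by Definition \ref{small normal neighborhood}(ii), Lemma \ref{lem:confrontoVW} yields $\|\Pi_\ell^{-1} W_i(\q) - V_i(q)\| \leq C\varepsilon$. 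Both $\{V_i(q)\}$ and $\{\Pi_\ell^{-1} W_i(\q)\}$ are orthonormal bases of $V$, so the orthogonal map $A \in SO(V)$ defined by $A V_i(q) = \Pi_\ell^{-1} W_i(\q)$ satisfies $\|A - \mathrm{id}\| \leq C\varepsilon$ and admits a skew-symmetric logarithm $B$ with $\|B\| \leq C\varepsilon$. Define
\[ X_i(\gamma(t)) \;:=\; \Pi_t\, \exp\!\bigl(\tfrac{t}{\ell} B\bigr) V_i(q), \qquad t \in [0,\ell]. \]

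Properties (I) and (III) are then immediate: the boundary values follow from $\exp(0) = I$ and $\Pi_\ell \exp(B) V_i(q) = \Pi_\ell \Pi_\ell^{-1} W_i(\q) = W_i(\q)$, while the orthonormality of $\{X_i(\gamma(t))\}$ actually holds \emph{exactly} (stronger than the claimed $2C\varepsilon$-orthonormality), because $\exp(tB/\ell)$ is orthogonal for every $t$ and parallel transport preserves inner products. For (II), the first-variation formula \eqref{1st variation} immediately yields $\frac{d}{ds}\big|_{s=0} L(\gamma_i^s) = g(W_i(\q), \dot\gamma(\ell)) - g(V_i(q), \dot\gamma(0))$. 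For the second variation we apply \eqref{2nd variation}: the $-(g(\xi_i,\dot\gamma)|_0^\ell)^2$ term is nonpositive and dropped, and the boundary term $g(\nabla_{\xi_i}\xi_i, \dot\gamma)|_0^\ell$ vanishes, because the boundary curves $s \mapsto \exp_q(s V_i(q))$ and $s \mapsto \exp_\q(s W_i(\q))$ are themselves geodesics whose covariant accelerations vanish at $s=0$. In the parallel trivialization $\nabla_{\dot\gamma} X_i$ corresponds to $\tfrac{1}{\ell} B \exp(tB/\ell) V_i(q)$, giving $\|\nabla_{\dot\gamma} X_i\| \leq \|B\|/\ell$ pointwise; Definition \ref{small normal neighborhood}(i) provides $\ell = d(q,\q) \geq 4\varepsilon$, whence
\[ \int_0^\ell \|\nabla_{\dot\gamma} X_i\|^2 \, d\tau \;\leq\; \frac{\|B\|^2}{\ell} \;\leq\; \frac{(C\varepsilon)^2}{4\varepsilon} \;\leq\; \tilde{C}\varepsilon, \]
which, combined with the Riemann-tensor integral in \eqref{2nd variation}, yields the claimed bound.

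The principal technical point is the fixed-point case $p = \T(p)$, where $\ell$ may be arbitrarily small so the separation bound of Definition \ref{small normal neighborhood}(i) is unavailable. Here Remark \ref{rmk:fixed} permits us to take $V_i = W_i$, and the sharper fixed-point estimate in Lemma \ref{lem:confrontoVW} upgrades the mismatch bound to $\|\Pi_\ell^{-1} W_i(\q) - V_i(q)\| \leq C\varepsilon \ell$, so $\|B\| \leq C\varepsilon \ell$ and $\|\nabla_{\dot\gamma} X_i\| \leq C\varepsilon$ pointwise. Then $\int_0^\ell \|\nabla_{\dot\gamma} X_i\|^2 \, d\tau \leq C^2 \varepsilon^2 \ell = O(\varepsilon)$ since $\ell$ is bounded by the diameter of $\M$. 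Tracking constants in both regimes and absorbing the dimension, curvature, diameter and Lipschitz-constant-of-$\Xi$ dependencies into a single $C$ completes the argument.
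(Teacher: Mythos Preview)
Your proof is correct and takes a genuinely different route from the paper. The paper constructs $X_i$ by \emph{linear} interpolation between the parallel-transported endpoint frames,
\[
X_i(\gamma(\tau)) \;=\; (1-\eta(\tau))\,\tilde{V}_i(\gamma(\tau)) \;+\; \eta(\tau)\,\tilde{W}_i(\gamma(\tau)),
\]
with a cutoff $\eta:[0,\ell]\to[0,1]$ satisfying $|\eta'|\le 2/\ell$; it then reads off $\|\nabla_{\dot\gamma}X_i\|\le 2C\varepsilon/\ell$ and invokes the separation bound $\ell\ge 4\varepsilon$ exactly as you do. Because linear interpolation does not preserve angles, the paper must verify the $2C\varepsilon$-orthonormality in (III) by a separate derivative estimate on $g(X_i,X_j)$. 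Your rotational interpolation $X_i(\gamma(t))=\Pi_t\exp(tB/\ell)V_i(q)$ is cleaner on this point: it yields exact orthonormality of $\{X_i\}$ for free, and the covariant-derivative bound $\|\nabla_{\dot\gamma}X_i\|\le\|B\|/\ell$ is immediate in the parallel trivialization. The price is the extra step of producing the skew logarithm $B$ of the orthogonal map $A$, which requires $\|A-I\|$ small; this is harmless here since Lemma~\ref{lem:confrontoVW} supplies $\|A-I\|\le C\varepsilon$. The treatment of the first and second variation formulas, the vanishing of the $\nabla_{\xi_i}\xi_i$ boundary term via the geodesic endpoint curves, and the fixed-point case via the sharper $C\varepsilon\ell$ estimate are all handled the same way in both proofs, with your argument for the fixed-point regime spelled out a bit more explicitly than the paper's.
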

\begin{proof}

Let $\eta : [0,\ell]\to [0,1]$ be a smooth function such that $\eta(0)=0$ and $\eta(\ell)=1$:
we can also assume $|\eta'(\tau)| \leq \frac 2{\ell} $.
Now we consider $\tilde{V}_i$ and $\tilde{W}_i$ as parallel transport along $\gamma$ respectively  of $V_i(\gamma(0))$ and $W_i(\gamma(\ell))$ along $\gamma$. In particular, thanks to Lemma \ref{lem:confrontoVW} and condition (ii) of Definition \ref{small normal neighborhood} we have $\|\tilde{W}_i(\gamma(\tau))-\tilde{V}_i(\gamma(\tau))\| \leq C \varepsilon $ for some $C=C(Lip(\nabla \phi|_{\cU_p}) , \tilde{K}, n)$ and every $\tau \in [0,\ell]$. Then we define the vector fields $X_i$ as 
\begin{equation*}\label{eq: def X_i}
 \Gamma(F_0\M|_{\gamma}) \ni X_i(\gamma(\tau)):=\big(1-\eta(\tau) \big) \tilde{V}_i(\gamma(\tau)) +  \eta(\tau) \,\tilde{W}_i(\gamma(\tau))  \qquad  \text{ for } \tau\in [0,\ell].
\end{equation*}
Notice that since $\nabla_{\dot{\gamma}}\tilde{V}_i=\nabla_{\dot{\gamma}}\tilde{W}_i=0$ we have $\nabla_{\dot{\gamma}}X_i = \eta' (\tilde{W}_i - \tilde{V}_i)$ therefore $\| \nabla_{\dot{\gamma}} X_i\| \leq \frac {2 C\varepsilon}{\ell}$. 

Plugging this estimate in \eqref{2nd variation} and using \eqref{eqn:separation} we obtain the estimate of the second derivative of the length $L(\gamma_s)$ in $(II)$ (notice that the variation curves have been constructed in such a way that, when extended properly outside, we have $\nabla_{X_i} X_i=0$ on $[0,\ell]$). Next, simply using the definition of $X_i$ in \eqref{1st variation} we obtain its first derivative. Notice that a similar calculation can be carried on for the fixed point case, using the corresponding estimate in Lemma \ref{lem:confrontoVW}.

In order to prove the quasi-orthonormality, we first show it for $i=j$, where we observe 
$$g(X_i,X_i)= \eta(t) g(\tilde{V}_i, \tilde{V}_i)+(1-\eta(t))g(\tilde{W}_i, \tilde{W}_i) - \eta(t)(1-\eta(t))\| \tilde{V}_i-\tilde{W}_i\|^2,$$
and in particular $1-\varepsilon^2 \leq g(X_i,X_i)\leq 1$. For the off-diagonal term we then compute
$$ \frac D{dt} g(X_i,X_j) = g( \nabla_{\dot{\gamma}} X_i , X_j) + g(X_i, \nabla_{\dot{\gamma}} X_j) \leq \|\nabla_{\dot{\gamma}} X_i\| \| X_j\| + \|\nabla_{\dot{\gamma}} X_i\| \| X_j\| \leq  \frac{4 C \varepsilon}{\ell}, $$
where we used $\|X_i\| =g(X_i,X_i)^{1/2} \leq 1$ and $\|\nabla_{\dot{\gamma}} X_i\| \leq \frac { 2 C\varepsilon}{\ell}$. Using that $g(X_i(\gamma(t)),X_j(\gamma(t)))=0$ for $t=0$ and $t=\ell$ we deduce that $|g(X_i,X_j)| \leq 2C \varepsilon$ concluding the proof.
\end{proof}

We now have all the ingredients to prove the main Theorem of this section.

\begin{proof}[Proof of Theorem~\ref{lemmone}]
We discuss in detail the proof of the more difficult case when $p$ is not a fixed point for $\T$, while we refer to Remark \ref{rmk: punti fissi di T} for the suitable modifications of the following argument in the case of a fixed point.
Let $q \in \cU_p$ be fixed and $\gamma$ be the length minimizing geodesic connecting $q$ to $\T(q)$ inside $\M$. We consider the geodesic parametrized by the arc-length, so $\gamma: [0,\ell] \to \M$ where $\ell$ denotes the length of $\gamma$. In what follows we will use the same notation as in Proposition \ref{prop:rotazione}. Let then $X_i$ be the class of variation fields introduced in Proposition \ref{prop:rotazione}, where $\{V_i\}$ and $\{W_i\}$ are the usual coordinate frames associated to the $\eps$-small normal neighborhood $\cU_p$. 
Accordingly to Proposition \ref{prop:rotazione}, we then define the variations of $\gamma$ 
$$f_i : [0,\delta/2]\times [0,\ell]\to \M,\qquad (s,t)\mapsto f_i(s,t):= \exp_{\gamma(t)}\Big(s\, X_i(\gamma({t}))\Big) \qquad \gamma_i^s(\cdot) = f_i(s,\cdot),
$$
where $\delta$ is provided by the definition of $\varepsilon$-small normal neighborhood and, hence, it ultimately depends on $\varepsilon$. 
Condition \eqref{eq:initial data} follows immediately by Proposition \ref{prop:rotazione} $(I)$. 

Next, gathering together \eqref{1st variation}, the first order identity in Proposition \ref{prop:rotazione} $(II)$, and the fact that $\Pi_{\T(q)} \dot{\gamma}(0) = \dot{\gamma}(\ell)$ we get
\begin{align*}
 \sup_i \left| \frac{d}{ds} |_{s=0} L(\gamma_i^s) \right| &= \sup_{i} \left| g(W_i(\T(q)), \dot{\gamma}(\ell) - g(V_i(q), \dot{\gamma}(q)) \right|  \\ &= \sup_i \left| g(W_i(\T(q)), \dot{\gamma}(\ell) - g(\Pi_{\T(q)} V_i(q), \Pi_{\T(q)} \dot{\gamma}(0)) \right| \\
 &=  \sup_i \left| g(W_i(\T(q)) - \Pi_{\T(q)} V_i(q) , \dot{\gamma}(\ell) \right| \leq  C  \varepsilon d(q,\T(q)), \end{align*}
 where in the end we used Lemma \ref{lem:confrontoVW}: we conclude that the first order inequality in \eqref{stima lemmone sulle variazioni} holds.

We next notice that the lower bound on the Ricci curvature implies that if $V_i$ is an orthonormal basis we have
$$ - \sum_{i=1}^n g\big( R(V_i,\dot{\gamma})  \dot{\gamma},V_i \big) = - \sum_{i=1}^n g\big( R(\dot{\gamma},V_i) V_i, \dot{\gamma} \big) =-  g\big( \Ric_p(\dot{\gamma}), \dot{\gamma} \big)    = ric(\dot{\gamma}, \dot{\gamma})\leq - K g(\dot{\gamma},\dot{\gamma})  \,.$$
Notice that $ric(v,v)= -\tr(A)$ where $A: T_pM \to T_pM$ is the linear map defined by $AX:= R(X,v)v$; in particular, thanks to Lemma~\ref{rmk: per concludere il lemmone} we have $\|A\|\leq 7 \tilde{K} \|v\|^2$ where $\tilde{K}$ is a bound on the sectional curvature and so, using that  $X_i$ is a $2C \varepsilon$-orthonormal base we get, by \eqref{eqn:stimatracciaA}
\begin{equation}\label{eqn:qo}
 \left| - ric(\dot{\gamma}, \dot{\gamma})- \sum_{i=1}^n g\big( R(X_i,\dot{\gamma})  \dot{\gamma},X_i \big) \right| \leq 14 nC \cdot C_n \varepsilon \tilde{K}. 
\end{equation}
Notice now that $\|\dot{\gamma} - X_1\| \leq C \varepsilon$, and in particular by the antisymmetry of the Riemann tensor and Lemma~\ref{rmk: per concludere il lemmone} we have $|g(R(X_1,\dot{\gamma}),\dot{\gamma},X_1) | \leq 7\tilde{K} C^2\varepsilon^2$.So applying now \eqref{eqn:qo} to estimates $(II)$ of Proposition~\ref{prop:rotazione}, we get 
\begin{align*}
\sum_{i=2}^n \frac{d^2}{ds^2}|_{s=0} L(\gamma_i^s) <& nC \varepsilon + \sum_{i=2}^n \int_0^{\ell} g\big( R(\dot{\gamma}, X_i) \dot{\gamma} , X_i\big) d\tau  \\
\leq & nC \varepsilon  - K  L(\gamma) + n C \cdot C_n \varepsilon  \tilde{K} L(\gamma) + 7 \tilde{K} C^2 \varepsilon^2 L(\gamma)  \\
\leq & -K L(\gamma) + nC\varepsilon ( 1 + 2\tilde{K}C_n L(\gamma)),
\end{align*}
where estimate $C$ is depending only on the Lipschitz constant of $\Xi$ in $\cU_p$ and $C_n$ depends only on the dimension $n$. Similarly we get $ \frac{d^2}{ds^2}|_{s=0} L(\gamma_1^s) \leq 2C\varepsilon $.
\end{proof}

\begin{rmk}\label{rmk: punti fissi di T}
Let $\varepsilon >0$ be fixed. If $p$ is a fixed point for $\T$, then we can easily obtain
$$\left| \frac{d}{ds} \Big|_{s=0} L(\gamma_{i}^s) \right| <  C \varepsilon   \quad \mbox{for every $i$, and } \quad  \sum_{i=1}^n \frac{d^2}{ds^2}\Big|_{s=0} L(\gamma_{i}^s) <   (  C \varepsilon - K) L(\gamma) \,$$
where $C$ is the same constant of Theorem \ref{lemmone}.
Indeed we can find a $\varepsilon$-small normal neighborhood $\mathcal{V}_p$ of $p$ with respect to $\T$ and some $\cU_p \subset \mathcal{V}_p$ such that also $\T(\cU_p) \subset \mathcal{V}_p$. Then, by choosing $W_i = V_i$ and using that $d(q,\T(q))$ is very small, estimates \eqref{eq:initial data} and \eqref{stima lemmone sulle variazioni} are still valid in this setting. 
\end{rmk}

We conclude this section with an immediate consequence of Theorem~\ref{lemmone} and Remark~\ref{rmk: punti fissi di T}.

\begin{cor}\label{corollario che serve per 4gi}
Let $\T \in \mathit{Diff}(\Omega, \Delta)$ with infinitesimal generator $\Xi$ and $p \in \Omega$, where $\Omega, \Delta $ are open subsets of $\M$. For every $\varepsilon>0$ there exists an $\varepsilon$-small normal neighborhood $\cU_p$ such that the following holds. 
For every $q \in \cU_p$ and $s \in [0,\delta/2]$ one can define 
$$q^{\pm s}_i:= exp_q( \pm sV_i(q)) \qquad \text{ and } \qquad  \textbf{q}^{\pm s}_i:= exp_{\T(q)}( \pm s W_i(\T(q)))\,,$$ 
where $W_i = V_i$ in case $p$ were a fixed point for $\T$.
Denoting as usual $\gamma$ the geodesic connecting $q$ and $\q= \T(q)$ and $\ell = L(\gamma)$, for every $s \in [0,\delta/2]$ and every $i \in \{ 1 \ldots n\}$ there exist suitable variations of $\gamma$
$$\gamma_i^{\pm s}:[0,\ell]\to \M \qquad \text{ such that } \qquad \gamma_i^{\pm s}(0)=q^{\pm s}_i \quad  \text{ and } \quad \gamma_i^{\pm s}(\ell)=\textbf{q}^{\pm s}_i$$ 
satisfying estimates \eqref{stima lemmone sulle variazioni}. Moreover, for every non-negative increasing $h\in C^{2}(\M)$ one has
\begin{equation}\label{stima lemmone}
\sum_{i = 2}^n [h(d(q^{+ s}_i, \textbf{q}^{+ s}_i)) + h(d(q^{- s}_i, \textbf{q}^{- s}_i)) - 2h(d(q, \textbf{q}))] \leq s^2  (C \eps - KL(\gamma)) \, h'(d(q,\textbf{q})) + o(s^2) + s^2 \varepsilon^2 \tilde{C}
\end{equation}
\begin{equation}\label{stima lemmone2}
h(d(q^{+ s}_1, \textbf{q}^{+ s}_1)) + h(d(q^{- s}_1, \textbf{q}^{- s}_1)) - 2h(d(q, \textbf{q})) \leq 2s^2 C \eps \, h'(d(q,\textbf{q})) + o(s^2) + s^2 \varepsilon^2 \tilde{C}
\end{equation}

where $C$ depends only $n$, $K$, $\tilde{K}$ and $\tilde{C}$ depends on $n$ and  $\sup_{[0,diam \M]} h''$. 
\end{cor}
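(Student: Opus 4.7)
The plan is to bootstrap from Theorem~\ref{lemmone} and Remark~\ref{rmk: punti fissi di T} via a Taylor expansion of the arc-length composed with $h$, using the monotonicity of $h$ to pass from lengths of curves to geodesic distances. First, given $\varepsilon>0$ (small enough), I take $\cU_p$ to be the $\varepsilon$-small normal neighborhood of $p$ with respect to $\T$ supplied by Lemma~\ref{lemma: esiste sempre small normal neighborhood}, or its analogue from Remark~\ref{rmk:fixed} if $p$ is a fixed point of $\T$. Then for every $q\in\cU_p$, Theorem~\ref{lemmone} applied to the minimizing geodesic $\gamma$ joining $q$ to $\q=\T(q)$ furnishes, for each $i\in\{1,\dots,n\}$, a one-parameter variation $\gamma_i^s$ meeting the boundary condition~\eqref{eq:initial data} and the derivative bounds~\eqref{stima lemmone sulle variazioni}.

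To cover both signs of $s$ at once, I use that the variations constructed in Proposition~\ref{prop:rotazione} take the form $\gamma_i^s(t)=\exp_{\gamma(t)}(sX_i(\gamma(t)))$, which is naturally defined on a symmetric $s$-interval around $0$; so setting $s\mapsto -s$ produces a variation $\gamma_i^{-s}$ whose endpoints coincide with $q_i^{-s}$ and $\q_i^{-s}$. Consequently the identities of Remark~\ref{variazioni + e -} apply, giving opposite first derivatives and equal second derivatives at $s=0$.

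The key reduction is then that $d(q_i^{\pm s},\q_i^{\pm s})\le L(\gamma_i^{\pm s})$ (since $\gamma_i^{\pm s}$ is a curve with these endpoints), and $d(q,\q)=L(\gamma)$; by monotonicity of $h$,
\[
 h(d(q_i^{+s},\q_i^{+s}))+h(d(q_i^{-s},\q_i^{-s}))-2h(d(q,\q))
 \le h(L(\gamma_i^{s}))+h(L(\gamma_i^{-s}))-2h(L(\gamma)).
\]
A second-order Taylor expansion around $L(\gamma)$, combined with the identities from Remark~\ref{variazioni + e -}, rewrites the right-hand side as
\[
 s^2\Bigl[h''(L(\gamma))\Bigl(\tfrac{d}{ds}\big|_{0} L(\gamma_i^s)\Bigr)^{\!2}+h'(L(\gamma))\tfrac{d^2}{ds^2}\big|_{0} L(\gamma_i^s)\Bigr]+o(s^2).
\]

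Finally, summing over $i=2,\dots,n$ and plugging in the two bounds from~\eqref{stima lemmone sulle variazioni}, namely $\sup_i|\tfrac{d}{ds}|_{0}L(\gamma_i^s)|<C\varepsilon$ and $\sum_{i=2}^n\tfrac{d^2}{ds^2}|_{0}L(\gamma_i^s)<C\varepsilon-KL(\gamma)$, together with the non-negativity of $h'$ and the uniform bound $h''(L(\gamma))\le \sup_{[0,\mathrm{diam}\,\M]}h''$, yields~\eqref{stima lemmone} with $\tilde{C}$ depending only on $n$ and $\sup h''$ (the $h''$ contribution is absorbed into the $s^2\varepsilon^2\tilde{C}$ term, since each squared first derivative is $O(\varepsilon^2)$). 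Inequality~\eqref{stima lemmone2} follows by repeating the same chain of estimates for the single index $i=1$, using the looser second-derivative bound $\tfrac{d^2}{ds^2}|_{0}L(\gamma_1^s)<2C\varepsilon$. There is no substantial obstacle; the only point requiring care is that the $o(s^2)$ remainder in the Taylor expansion be uniform in $q\in\cU_p$, which follows from the compactness of $\M$, the $C^2$ regularity of $h$, and the fact that $L(\gamma)$ and its variations stay in a bounded range.
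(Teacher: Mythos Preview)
Your proposal is correct and follows essentially the same approach as the paper: use that $d(q_i^{\pm s},\q_i^{\pm s})\le L(\gamma_i^{\pm s})$ while $d(q,\q)=L(\gamma)$, apply the monotonicity of $h$, and then plug the bounds \eqref{stima lemmone sulle variazioni} into the Taylor expansion \eqref{Taylor exp con h} from Remark~\ref{variazioni + e -}. The paper's own proof is a terse two-line version of exactly this argument.
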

\begin{proof}
The proof is immediate once we observe that the curve $\gamma_i^{\pm s}$ connecting  $q^{\pm s}_i$ and $\textbf{q}^{\pm s}_i$ is surely longer than $d(q^{\pm s}_i, \textbf{q}^{\pm s}_i)$. We recall that, by construction, $\gamma$ is precisely the geodesic between $q$ and $\q= \T(q)$. Therefore,
\begin{equation}\label{1}
d(q^{\pm s}_i, \textbf{q}^{\pm s}_i)   \leq L(\gamma_i^{\pm s})\,, \qquad \mbox{and} \qquad d(q, \q) = L(\gamma),
\end{equation}  
and the conclusion follows by gathering together \eqref{1} and \eqref{Taylor exp con h}.
\end{proof}

\section{Linear and nonlinear gradient inequalities}

\subsection{The smooth case}

As a first step, we shall prove, in the smooth setting, the validity of the 4 gradient inequality on small enough neighborhoods of an arbitrary point outside a negligible set $\Sigma$.
In what follows we will use the notation introduced in Theorem \ref{lemmone}.

\begin{prop}\label{lemma 4gi su intornino}
Assume the Setup~\ref{setup}, where additionally $h$ and $\ell$ respectively satisfy (cost2reg) and (cvxreg). Let $\phi,\psi$, the optimal Kantorovich potentials between $\mu$ and $\nu$ for cost $c=h\circ d$ and $\T$ be the optimal map from $\mu$ to $\nu$.
Then there exists a $\vol$-negligible closed set $\Sigma$ such that for every $p \in \M \setminus \Sigma$ there exists a constant $c_p>0$ depending on the geometric setting of the problem (thus on $n,\mu,\nu,h,\phi,\psi$ and the geometry of $\M$) such that for every  $0<\eps <1$ one can find an $\eps$-small normal neighborhood of $p$, denoted by $\cU_p$, for which the following inequality holds

\begin{equation}\label{stima 4gi su intornino}
 \int_U \mu \,\div \nabla  \phi\, \vol + \int_{\T(U)}  \nu \,\div \nabla \psi\, \vol 
\leq -K \int_U h'\big(d(q,\T(q))\big)d(q,\T(q)) d\mu + \eps c_p  \mu(U)
\end{equation}
whenever $U$ is a geodesic open ball contained in $\cU_p$. If moreover $p$ is not a fixed point for $\T$ we also have the stronger inequality

\begin{equation}\label{stima 4gi su intorninplus}
\begin{aligned}
 \int_U \mu \,\div \nabla  \phi\, \vol &+ \int_{\T(U)}  \nu \,\div \nabla \psi\, \vol \\
 &\leq -K \int_U h'\big(d(q,\T(q))\big)d(q,\T(q)) d\mu + I_1(U) +\eps c_p  \mu(U),
  \end{aligned}
\end{equation}
where $I_1(U):=\int_U  \mu \cdot \nabla_V \nabla_V \phi \, \vol +   \int_{\T(U)}  \nu \cdot \nabla_W \nabla_W \psi \, \vol $, in which $V= \frac{\nabla \phi}{ \| \nabla \phi \|} $ and $W= \frac{\nabla \psi}{ \| \nabla \psi \|}$.  Moreover $I_1(U)$ satisfies 
\begin{equation}\label{eqn:resto4gi}
I_1(U) \leq c_p \varepsilon \mu(U)\,.
\end{equation}
\end{prop}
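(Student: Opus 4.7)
The approach combines the simultaneous-variation estimates from Corollary~\ref{corollario che serve per 4gi} with the $c$-concavity inequality $\phi(x)+\psi(y) \leq h(d(x,y))$ (saturated at optimally coupled pairs) to produce a pointwise second-order bound, which I then integrate on $U$ against $\mu\vol$, converting the $\psi$-part via the push-forward $\T_{\#}(\mu\vol) = \nu\vol$.

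I first define $\Sigma$ as the closed $\vol$-negligible singular set provided by Theorem~\ref{thm:DF}, so that outside $\Sigma$ both $\phi,\psi$ are $C^2$ and $\T$ is a $C^1$-diffeomorphism with locally bounded derivatives. Fix $p \in \M\setminus\Sigma$ and take the $\varepsilon$-small normal neighborhood $\cU_p$ from Lemma~\ref{lemma: esiste sempre small normal neighborhood} (or Remark~\ref{rmk:fixed} if $\T(p)=p$), with frames $\{V_i\}$ on $\cU_p$ and $\{W_i\}$ on $\T(\cU_p)$. For $q\in U\subseteq \cU_p$ set $\q=\T(q)$, and apply the $c$-duality to the perturbed pairs $(q_i^{\pm s},\q_i^{\pm s})$ of Corollary~\ref{corollario che serve per 4gi}; since equality holds at $(q,\q)$, averaging the $+s$ and $-s$ versions yields
\begin{equation*}
[\phi(q_i^{+s})+\phi(q_i^{-s})-2\phi(q)] + [\psi(\q_i^{+s})+\psi(\q_i^{-s})-2\psi(\q)] \leq h(d(q_i^{+s},\q_i^{+s}))+h(d(q_i^{-s},\q_i^{-s}))-2h(d(q,\q)).
\end{equation*}

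Dividing by $s^2$, letting $s\to 0$, and summing over $i=2,\ldots,n$, the left-hand side converges pointwise to $\sum_{i=2}^n[\nabla^2\phi(V_i,V_i)(q) + \nabla^2\psi(W_i,W_i)(\q)]$, while the right-hand side is bounded by~\eqref{stima lemmone} by $-K\,d(q,\q)\,h'(d(q,\q)) + c_p\varepsilon$. The key algebraic step is to rewrite the Hessian sum as a Laplacian: thanks to the $2C\varepsilon$-orthonormality of $\{V_i\}$ (Proposition~\ref{prop:rotazione}) and the trace estimate~\eqref{eqn:stimatracciaA}, one has $\sum_{i=2}^n\nabla^2\phi(V_i,V_i)(q) = \Delta\phi(q) - \nabla^2\phi(V_1,V_1)(q) + O(\varepsilon\|\nabla^2\phi\|)$; and properties (v)--(vi) of Definition~\ref{small normal neighborhood} let me identify $V_1(q)$ with $\nabla\phi/\|\nabla\phi\|$ up to $O(\varepsilon)$, so $\nabla^2\phi(V_1,V_1)(q) = \nabla_V\nabla_V\phi(q) + O(\varepsilon\|\nabla^2\phi\|)$, and symmetrically at $\q$ for $\psi$ and $W_1$. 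Integrating the resulting pointwise inequality against $\mu\vol$ on $U$ and using the change of variables $\int_U F(\T(q))\mu(q)\vol = \int_{\T(U)}F(y)\nu(y)\vol$ produces~\eqref{stima 4gi su intorninplus}.

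To obtain~\eqref{eqn:resto4gi} I would repeat the same variational argument for $i=1$ only, using~\eqref{stima lemmone2} in place of~\eqref{stima lemmone}: taking $s\to 0$ yields the pointwise upper bound $\nabla_V\nabla_V\phi(q)+\nabla_W\nabla_W\psi(\q) \leq c_p\varepsilon$ (modulo the same $O(\varepsilon)$ quasi-orthonormality corrections), which integrates at once to $I_1(U) \leq c_p\varepsilon\mu(U)$. Absorbing $I_1(U)$ into the right-hand side of~\eqref{stima 4gi su intorninplus} then delivers~\eqref{stima 4gi su intornino}; in the fixed-point case one can instead sum directly over $i=1,\ldots,n$ from the start using Remark~\ref{rmk: punti fissi di T} and obtain~\eqref{stima 4gi su intornino} without the splitting.

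The main obstacle is the uniform control of the terms absorbed into $O(\varepsilon)$: the constant $c_p$ must depend only on quantities that are finite for $p\in\M\setminus\Sigma$, namely $\sup_{\overline{\cU_p}}(\|\nabla^2\phi\| + \|\nabla^2\psi\|)$ (bounded thanks to Theorem~\ref{thm:DF}), $\sup h'$, $\sup h''$, the Lipschitz constant of the transport generator $\Xi$ on $\cU_p$, and the curvature bounds $K,\tilde K$. A secondary subtlety is the exchange of the $s\to 0$ limit with the integral over $U$, which I would handle by dominated convergence using the uniform $C^2$-bounds on $\phi,\psi$ on $\overline{\cU_p}$.
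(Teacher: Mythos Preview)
Your approach is correct and somewhat different in structure from the paper's. You take the pointwise limit $s\to 0$ \emph{first}, obtaining a Hessian inequality $\sum_i\nabla^2\phi(V_i,V_i)(q)+\nabla^2\psi(W_i,W_i)(\q)\le -Kd\,h'(d)+c_p\varepsilon$ at each $q\in\cU_p$, and then integrate against $\mu\vol$. The paper instead integrates the finite-$s$ inequality over $U$ against $\mu$ \emph{first}, then performs integral-level manipulations: it replaces the exponential curves of Corollary~\ref{corollario che serve per 4gi} by the flows $\Phi_i^{\pm},\Psi_i^{\pm}$ of the frames, uses the push-forward estimate $(\Phi_i^+(s,\cdot))_\#\vol=\vol+O(s\varepsilon)$ to change variables, isolates explicit boundary terms on $\partial U$ and $\partial\T(U)$, takes $s\to 0$ at the integral level, and finally applies the divergence theorem to convert the boundary integrals into $\int_U\mu\,\div\nabla\phi$ and $\int_{\T(U)}\nu\,\div\nabla\psi$. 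Your route is cleaner here---it avoids the boundary-term computation entirely---and it in fact yields the pointwise inequality of the paper's next Proposition~\ref{corollario a 4gi su intornino} directly, from which the integrated statement follows trivially. The paper's integrate-first approach is closer in spirit to the ``regularity by duality'' strategy of the introduction and to the Lie-group proof, where no pointwise second-order information on $\phi,\psi$ is assumed; in the present smooth setting that extra generality is not needed, which is why your shortcut works. One minor remark: since you take the pointwise limit before integrating, the ``exchange of limit and integral'' you flag as a subtlety is not actually required in your argument.
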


\begin{proof} Let us consider $\Sigma=\Sigma_\mu \cup \Sigma_\nu$ given by Theorem~\ref{thm:DF} and $c=h \circ d$. In particular we have $\vol(\Sigma)=0$ and $\T$ is at least a $C^3$ diffeomorphism in $\M \setminus \Sigma$;  moreover, for the infinitesimal generator of $\T$ we have $\Xi = \lambda^{-1} (-\nabla \phi) \in C^2 (\M \setminus \Sigma)$ which implies that $\phi \in C^3(\M \setminus \Sigma)$; 

Let $p,\,\eps$ be fixed and let $\cU_p$ be an $\eps$-small normal neighborhood of $p$ with respect to $\T$. Observe, that the existence of such $\cU_p$ is ensured by Lemma \ref{lemma: esiste sempre small normal neighborhood}, moreover estimates of Corollary \ref{corollario che serve per 4gi} applies in this setting. We introduce the fields $\{\pm V_i\},\,\{\pm W_i\}$ given by Definition \ref{small normal neighborhood}
and the associated flows
\[  \left\lbrace \begin{array}{ll}
\partial_s \Phi^{\pm} (s,q) = \pm V_i(\Phi^\pm(s,q))\\
\Phi^\pm_i(0,q)= \pm V_i(q)
\end{array} \right.  \qquad   
 \left\lbrace \begin{array}{ll}
\partial_s \Psi^{\pm} (s,\T(q)) = \pm W_i(\Psi^\pm(s,\T(q)))\\
\Psi^\pm_i(0,\T(q))= \pm W_i(\T(q))
\end{array} \right.
 \]
 for every $q \in \cU_p$ and $s \in [0,\delta]$, where $\delta$ is given by the small normal neighborhood, so it ultimately depends on $p$ and $\eps$. Moreover, we let $\Phi^\pm_i = \Psi^\pm_i$ whenever $p$ is a fixed point for $\T$.
 For future use, we calculate 
 \[  \Phi^+_i (s,\cdot)_\sharp \vol = \vol + s \, (\div \, V_i) \vol  + o(s)  \]
which, thanks again to Corollary \ref{cor:geod coor} and 
$$ \div(V_i)=\sum_{j=1}^n g(\nabla_{V_j}V_i,V_j)= \sum_{j=1}^n \Gamma_{ij}^j \,, $$
 implies
\begin{equation}\label{scarto pushforward}
| \Phi^+_i (s,\cdot)_\sharp \vol - \vol  | \leq s\eps n + o(s) 
\end{equation}
for all $s 	\in [0,\delta]$. Analogous estimates hold also for $\Phi^-_i$ and $\Psi^\pm_i$.
Moreover, from \eqref{da trasporto ottimo 1} and \eqref{da trasporto ottimo 2} we deduce 
\begin{equation*}\label{da trasporto 1 e 2 bis}
\begin{split}
&\phi\big( \Phi^\pm_i (s,q)  \big) + \psi\big( \Psi^\pm_i (s,\T(q))  \big)   \leq h \big( d (\Phi^\pm_i (s,q), \Psi^\pm_i (s,\T(q))) \big) \\
&\phi (q) + \psi(\T(q)) = h\big(d(q,\T(q))\big)
\end{split}
\end{equation*}
for every $(s,q) \in [0,\delta] \times \cU_p$.
Let now $U \subset \cU_p$ be an open geodesic ball arbitrary fixed, then \eqref{stima lemmone} and \eqref{stima lemmone2} respectively implies 
\begin{align}\label{da sopra 4gi}
\nonumber
\sum_{i=2}^n \int_U &\phi(\Phi^+_i (s,q)) + \phi(\Phi^-_i(s,q))  + \psi(\Psi^+_i (s,\T(q))) + \psi(\Psi^-_i(s,\T (q))) - 2 \big(\phi(q) + \psi(\T(q))\big) d\mu \\
\leq & \,  -s^2K \int_U h' \big( d(q,\T(q)) \big) d(q,\T(q)) d\mu  + s^2 \varepsilon(C+\varepsilon \tilde{C})\vol(U) + o(s^2)
\end{align}
\begin{align}\label{da sopra 4gi2}
\nonumber
 I_1^s := \int_U &\phi(\Phi^+_1(s,q)) + \phi(\Phi^-_1(s,q))  + \psi(\Psi^+_1(s,\T(q))) + \psi(\Psi^-_1(s,\T (q))) - 2 \big(\phi(q) + \psi(\T(q))\big) d\mu \\
\leq &  s^2 \varepsilon(C+\varepsilon \tilde{C})\vol(U) + o(s^2)
\end{align}

where $C,\,\tilde{C}$ are geometric constants provided by Corollary \ref{corollario che serve per 4gi}. Let us underline that $\tilde{C}$ do not depend on $p$ nor on $U$, while $C$ depends on $p$ and $U$ through the Lipschitz constant of $\Xi$, which is still controlled on compact sets of $\M \setminus \Sigma$.
On the other hand, using \eqref{scarto pushforward}, the uniform bounds of $\phi,\,\psi$ and $\mu,\,\nu$ and the relation $\nu = \T_\sharp \mu$, for every $i = 1 \ldots n$ it is easy to get 
\begin{align}\label{da sotto 4gi}
\nonumber
\int_U &\phi(\Phi^+_i (s,q)) + \phi(\Phi^-_i(s,q))  + \psi(\Psi^+_i (s,\T(q))) + \psi(\Psi^-_i(s,\T (q))) - 2 \big(\phi(q) + \psi(\T(q))\big) d\mu \\
\nonumber
\geq &\, - \int_{U \,\cap \,\Phi^-_i(U)}  \big( \phi(\Phi^+_i (s,q)) - \phi(q)\big) \big( \mu(\Phi^+_i (s,q)) - \mu(q) \big) \vol \\
\nonumber
&\, - \int_{\T(U) \cap \Psi^-_i (\T(U))}  \big( \psi(\Psi^+_i (s,\q)) - \psi(\q)\big) \big( \nu(\Psi^+_i (s,\q)) - \nu(\q) \big) \vol \\
&\, - (2s^2\eps n + o(s^2)) \big( \|  \nabla \phi \,\mu \|_{L^\infty} \vol(U) + \|  \nabla \psi\, \nu \|_{L^\infty} \vol(\T(U)) \big)  + \, \text{boundary terms}
\end{align}
where the \emph{boundary terms} correspond to 
\begin{align*}
&\int_{U \setminus \Phi^-_i(U)} \big( \phi(\Phi^+_i (s,q)) - \phi(q)\big)  \mu (q) \vol - \int_{\Phi^-_i(U) \setminus U} \big( \phi(\Phi^+_i (s,q)) - \phi(q)\big)  \mu(\Phi^+_i (s,q)) \vol \\
+ &\int_{\T(U) \setminus \Psi^-_i(\T(U))}  \big( \psi(\Psi^+_i (s,\q)) - \psi(\q)\big)  \nu (\q) \vol - \int_{\Psi^-_i(\T(U)) \setminus \T(U)} \big( \psi(\Psi^+_i (s,\q)) - \psi(\q)\big)  \nu( \Psi^+_i (s,\q)) \vol.
\end{align*}
Observe that  $\partial U$ and $\partial \T(U)$ being smooth curves and using standard geometric arguments it is possible to deduce
\[  \lim_{s \to 0} \frac{1}{s^2} \, \text{ boundary terms} = \int_{\partial U} \nabla_{V_i} \phi(q) n_i(q) \mu(q) +  \int_{\partial \T(U)} \nabla_{W_i} \psi(\q) \textbf{n}_i(\q) \nu(\q),  \]
where we denoted with $n_i,\,\textbf{n}_i$ the $i$-th components of the outer unit normals to $\partial U$ and $ \partial \T(U)$ respectively. 
Gathering together \eqref{da sopra 4gi}, \eqref{da sotto 4gi}, dividing by $s^2$ and taking the limit as $s \to 0$, we then obtain 
\begin{align*}
& - \int_U \nabla \phi \cdot \nabla \mu\, \vol - \int_{\T(U)} \nabla \psi \cdot \nabla \nu\, \vol + \int_{\partial U} \nabla \phi \cdot n \mu + \int_{\partial \T(U)} \nabla \psi \cdot \textbf{n} \nu  \\
\nonumber
\leq &  - K \int_U h' \big( d(q,\T(q)) \big) d(q,\T(q)) d\mu +\varepsilon \Big[\vol(U)( \eps \tilde{C} + n^2 \| \nabla \phi \, \mu \|_{L^\infty}) + \vol(\T(U)) n^2 \| \nabla \psi \, \nu \|_{L^\infty}\Big].
\end{align*}
Finally, by applying the divergence theorem for Riemannian manifolds, we conclude \eqref{stima 4gi su intornino} with
\[  c_p = \max \left\lbrace C \sup_{\M\times \M} h'(d)d + \eps \tilde{C} +  n^2 \| \nabla \phi \, \mu \|_{L^\infty};\,\,   n^2 \| \nabla \psi \, \nu \|_{L^\infty}   \right\rbrace. \]
Since  we choose $\varepsilon<1$, $c_p$ does not depend on the choice of $U$ and so this concludes the proof. 

Let us assume now that $p$ is not a fixed point. In particular $\nabla \phi \neq 0$ and thus $\Xi \neq 0$. Notice that in \eqref{da sopra 4gi2} we could treat $I_1^s$ differently.  Using that $ \nabla \phi / \| \nabla \phi\| = -\Xi / \| \Xi \|$ since they have the same direction and opposite verse, the fact that  $\|V_1- \Xi / \| \Xi \| \| \leq C \varepsilon$, and the definition of $I_1$, we get 
\begin{align*}
 |I^s_1  - s^2 I_1(U)|  &=  s^2\left|\int_U  \mu \cdot \nabla_{V_1} \nabla_{V_1} \phi \, \vol +    \int_{\T(U)}  \nu \cdot \nabla_{W_1} \nabla_{W_1}  \psi \, \vol  - I_1(U)\right| + o (s^2) \mu(U) \\
 & \leq C \varepsilon s^2 \mu(U) (\| D^2\phi\|_{\infty,U} +  \| D^2\psi\|_{\infty,\T(U)})  + o(s^2) \mu(U).
 \end{align*}
 Dividing by $s^2$ we get $I_1(U) - C \varepsilon \mu(U) \leq \lim_{s \to 0} I_1^s/s^2 \leq I_1(U) + C \varepsilon \mu(U)$. Using again \eqref{da sopra 4gi2}, we can obtain $I_1(U) \leq C \varepsilon \mu(U)$ and using instead the estimate from above of $I_1^s $ in terms of $I_1$ we get the improved inequality \eqref{stima 4gi su intorninplus}. 
\end{proof}

An immediate consequence of Proposition~\ref{lemma 4gi su intornino} is the following 

\begin{prop}\label{corollario a 4gi su intornino}
Assume the Setup~\ref{setup} , where additionally $h$ and $\ell$ respectively satisfy (cost2reg) and (cvxreg). Let $\phi,\psi$, the optimal Kantorovich potentials between $\mu$ and $\nu$ for cost $c=h\circ d$ and $\T$ be the optimal map from $\mu$ to $\nu$. Let $\Sigma$ be the set given by Proposition~\ref{lemma 4gi su intornino}.

Let $\alpha, \beta$ be defined in $\M \setminus \Sigma$ by $\alpha = \frac{\ell' (|\nabla \phi|)}{|\nabla \phi|}$, $\beta = \frac{\ell' (|\nabla \psi|)}{|\nabla \psi|}$ (notice that we can extend $\alpha=0$ where $\nabla \phi=0$ thanks to (cvxreg)). Then 
\begin{equation}\label{stima 4gi}
 \, \div \nabla \phi (q)\,+\,  \, \div\nabla \psi  \big(\T(q)\big)\, \leq -K h'(d(q,\T(q))) d(q,\T(q)) \qquad \forall q \in \M \setminus \Sigma
\end{equation}
and
\begin{align}\label{stima 4gi pesata}
 \, \div (\alpha \nabla \phi) (q)\,+\,  \, \div (\beta\nabla \psi)  \big(\T(q)\big)\, \leq -K \ell'( h'(d(q,\T(q)))) d(q,\T(q))  \qquad \forall q \in \M \setminus \Sigma.
\end{align}
\end{prop}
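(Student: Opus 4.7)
The plan is to reduce the integral inequalities of Proposition~\ref{lemma 4gi su intornino} to pointwise inequalities by localizing around an arbitrary point $p \in \M \setminus \Sigma$, then to extract the weighted inequality \eqref{stima 4gi pesata} by combining the sharper estimate \eqref{stima 4gi su intorninplus} with the remainder bound \eqref{eqn:resto4gi}.

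For the linear inequality \eqref{stima 4gi}, I would fix $p \in \M \setminus \Sigma$ and $\eps>0$, pick the $\eps$-small normal neighborhood $\cU_p$ given by Proposition~\ref{lemma 4gi su intornino}, and take $U = B_r(p) \subset \cU_p$. Using $\nu = \T_\sharp \mu$ to rewrite $\int_{\T(U)} \nu\,(\div\nabla\psi)\,\vol = \int_U \div\nabla\psi(\T(q))\, d\mu$, inequality \eqref{stima 4gi su intornino} reads
\[
\int_U \bigl[\div\nabla\phi(q) + \div\nabla\psi(\T(q))\bigr]\,d\mu \leq -K\int_U h'(d(q,\T(q)))\,d(q,\T(q))\,d\mu + \eps c_p \mu(U).
\]
Since $\phi,\psi \in C^3(\M\setminus\Sigma)$, $\T \in C^2(\M\setminus\Sigma)$ and $\mu$ is continuous and strictly positive (Theorem~\ref{thm:DF}), dividing by $\mu(U)$ and letting $r \to 0$ (continuity of all integrands at $p$) and then $\eps \to 0$ yields \eqref{stima 4gi}.

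For \eqref{stima 4gi pesata}, consider first a point $p$ that is not fixed by $\T$. The same localization applied to \eqref{stima 4gi su intorninplus} and to \eqref{eqn:resto4gi} produces the two pointwise estimates
\begin{align*}
\Delta\phi(p) + \Delta\psi(\T(p)) - \nabla_V\nabla_V\phi(p) - \nabla_W\nabla_W\psi(\T(p)) &\leq -K h'(d(p,\T(p)))\,d(p,\T(p)), \\
\nabla_V\nabla_V\phi(p) + \nabla_W\nabla_W\psi(\T(p)) &\leq 0.
\end{align*}
A direct expansion in a local frame adapted to $V$ (respectively $W$), using that $\nabla_V V$ is orthogonal to $V$ because $|V|=1$, gives
\[ \div(\alpha \nabla\phi) = \alpha\,\Delta\phi + (\ell''(|\nabla\phi|) - \alpha)\,\nabla_V\nabla_V\phi, \]
and analogously for $\div(\beta\nabla\psi)$ at $\T(q)$. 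The crucial matching comes from Theorem~\ref{thm:DF}(iv) together with $\lambda = h'$: at every $q \in \M\setminus\Sigma$ one has $|\nabla\phi(q)| = |\nabla\psi(\T(q))| = h'(d(q,\T(q)))$, and hence $\alpha(q) = \beta(\T(q)) =: \tilde\alpha(q)$ and $\ell''(|\nabla\phi(q)|) = \ell''(|\nabla\psi(\T(q))|) =: \tilde c(q)$. Summing the two expansions one obtains
\[ \div(\alpha\nabla\phi)(q) + \div(\beta\nabla\psi)(\T(q)) = \tilde\alpha\bigl[\Delta\phi + \Delta\psi(\T) - \nabla_V\nabla_V\phi - \nabla_W\nabla_W\psi(\T)\bigr] + \tilde c\bigl[\nabla_V\nabla_V\phi + \nabla_W\nabla_W\psi(\T)\bigr]. \]
Since $\ell$ is increasing and convex, $\tilde\alpha \geq 0$ and $\tilde c = \ell'' \geq 0$; plugging the two displayed pointwise estimates into the corresponding bracket and using $\tilde\alpha\, h'(d) = \ell'(h'(d))$ yields exactly \eqref{stima 4gi pesata}.

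At a fixed point $p = \T(p)$, assumption (cvxreg) ensures $\ell' \equiv 0$ in a neighborhood of $0$; since $|\nabla\phi|$ and $|\nabla\psi \circ \T|$ vanish near $p$, both $\alpha \nabla\phi$ and $\beta \nabla\psi$ vanish identically on a neighborhood of $p$, and both sides of \eqref{stima 4gi pesata} are zero. The main subtlety of the argument is the necessity of the exact matching $\alpha(q) = \beta(\T(q))$ and $\ell''(|\nabla\phi(q)|) = \ell''(|\nabla\psi(\T(q))|)$: without it one could not write the divergence sum as a \emph{non-negative} combination of (A) and (B), and unsigned cross terms in $\nabla_V\nabla_V\phi$ and $\nabla_W\nabla_W\psi(\T)$ would survive.
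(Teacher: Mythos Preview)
Your proposal is correct and follows essentially the same route as the paper: localize the integral inequalities \eqref{stima 4gi su intornino}, \eqref{stima 4gi su intorninplus}, \eqref{eqn:resto4gi} to obtain the pointwise bounds, then expand $\div(\alpha\nabla\phi)$ and recombine using $\alpha(q)=\beta(\T(q))$ and $\ell''\geq 0$; your identity $\div(\alpha\nabla\phi)=\alpha\,\Delta\phi+(\ell''(|\nabla\phi|)-\alpha)\,\nabla_V\nabla_V\phi$ is just a slightly cleaner packaging of the paper's computation via $\ell(t)=g(t^2)$. One small wording issue at fixed points: it is not $|\nabla\phi|$ that vanishes on a neighborhood of $p$, but rather $\ell'(|\nabla\phi|)$ (since $|\nabla\phi|$ is merely small there and (cvxreg) gives $\ell'\equiv 0$ near $0$); also the identity $|\nabla\phi(q)|=h'(d(q,\T(q)))$ comes from Theorem~\ref{struct}(iv), not Theorem~\ref{thm:DF}.
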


\begin{proof}
Given $p \in \M \setminus \Sigma$ and $0<\eps<1$ arbitrary small, thanks to Proposition \ref{lemma 4gi su intornino},  we know that there exists some $\eps$-small normal neighborhood $\U_p$ of $p$ where \eqref{stima 4gi su intornino} holds for any geodesic ball $U \subset \U_p$.  Thanks to the regularity assumptions we have that $\div \nabla \phi (q)$ and $\div\nabla \psi  \big(\textbf{q}\big)$ are continuous, respectively in $\cU_p$ and $\T(\cU_p)$, as a consequence of Theorem~\ref{thm:DF}.  In particular,  dividing \eqref{stima 4gi su intornino} by $ \mu(U)= \nu(\T(U))$ (by the injectivity of $\T$) we deduce, by the continuity of the integrands, that 
\[  \div \nabla \phi (q)\,+\,  \, \div\nabla \psi  \big(\T(q)\big)\, \leq  -K h'(d(q,\T(q))) d(q,\T(q)) + c_p\eps.  \]
We conclude \eqref{stima 4gi} by the arbitrariness of $\eps$. In a similar fashion, whenever $p$ is not a fixed point, starting from \eqref{stima 4gi su intorninplus} and \eqref{eqn:resto4gi} we obtain the stronger inequalities
\begin{equation}\label{stima 4gistrong}
  \div \nabla \phi (q)\,+\,  \, \div\nabla \psi  \big(\T(q)\big)\, \leq  -K h'(d(q,\T(q))) d(q,\T(q)) +  \nabla_V \nabla_V \phi(q) + \nabla_W \nabla_W \psi( \T(q)),
  \end{equation}
\begin{equation}\label{stimadersec}  \nabla_V \nabla_V \phi(q) + \nabla_W \nabla_W \psi (\T(q)) \leq 0
\end{equation}
where $V= \frac{\nabla \phi}{ \| \nabla \phi \|} $ and $W= \frac{\nabla \psi}{ \| \nabla \psi \|}$.

Let us now focus on \eqref{stima 4gi pesata}. Notice that under the assumption (cvxreg) in Setup~\ref{setup} we have $\ell(t) = g(t^2)$, $\ell'(t) = g'(t^2)2t$, $\ell''(t) = g''(t^2) 4t^2 + 2g'(t^2)$. Moreover  $\alpha = 2g'(|\nabla\phi|^2)$ and $\beta = 2g'(|\nabla\psi|^2)$.   Let $q \in \M$ be a point where $\T,\phi,\psi$ are well defined and smooth (we recall that this holds $\vol$-almost everywhere): if $\nabla \phi(q)=0$ then by (cvxreg) we have that both the right-hand side and the left-hand side in \eqref{stima 4gi pesata} are zero in a neighborhood of $q$, so we can assume that $\nabla \phi (q) \neq 0$. Developing the divergence, noticing that  $\alpha(q)=\beta(\T(q))$ and applying \eqref{stima 4gistrong} we get 
\begin{align*}
\div (\alpha \nabla \phi) (q)\,+\,  &\, \div (\beta\nabla \psi)  \big(\T(q)\big)  \\
= & \, \alpha(q) \div \nabla \phi(q) +  \nabla \alpha(q) \cdot \nabla \phi(q) + \beta(\T(q)) \div \nabla \psi(\T(q)) +  \nabla \beta(\T(q)) \cdot \nabla \psi(\T(q)) \\
\leq &  -K \alpha(q) h'(d(p,\T(q))) d(p,\T(q)) + \nabla \alpha(q) \cdot \nabla \phi(q)  + \nabla \beta(\T(q)) \cdot \nabla \psi(\T(q)) \\
  & +  \alpha(q) (\nabla_V \nabla_V \phi(q) + \nabla_W \nabla_W \psi( \T(q)))
\end{align*}

Recalling that $\alpha(q) = 2 g'(|\nabla\phi|^2)(q)$ we deduce that 
\[ \nabla \alpha(q) \cdot \nabla \phi(q) = 4 g''(|\nabla \phi|^2)(q) \nabla\phi(q) \cdot D^2\phi(q) \cdot \nabla\phi(q)  \]
and an analogous formula holds for $\nabla \beta(\T(q)) \cdot \nabla \psi(\T(q))$.  Moreover,  since $ \nabla \phi(q) = | \nabla \phi(q)| V$ and $\nabla \psi(\T(q)) = |\nabla \psi(\T(q))| W$  and $  | \nabla \phi(q)| = |\nabla \psi(\T(q))|$,  we can then estimate 
\begin{align*}
\notag
\div (\alpha \nabla \phi) (q)\,+\,  &\, \div (\beta\nabla \psi)  \big(\T(q)\big)  \\
\notag
\leq &  -K \alpha(q) h'(d(q,\T(q))) d(q,\T(q))  \\
& + \left[4 g''(|\nabla \phi|^2)(q) | \nabla \phi(q)|^2  +  2 g'(|\nabla \phi (q)|^2 \big) \right]\big( \nabla_{V} \nabla_{V} \phi(q) +  \nabla_{W} \nabla_{W} \psi(\T(q)) \big).
\end{align*}
Once here, using \eqref{stimadersec} and that $ g''(t^2) 4t^2 + 2g'(t^2) = \ell''(t) \geq 0$ we conclude using that whenever $\T(q)$ is not on the cut-locus of $q$ (that happens almost everywhere, by an argument similar to~\cite[Proposition 4.1]{Cor}), we have $\| \nabla \phi(q)\|=h'(d(q,\T(q)))$ and so $\alpha(q) h'(d(q, \T(q)))  = \ell' (h'(d(q,\T(q))))$. Hence, we can conclude.
\end{proof}

\subsection{The general case}

We are now in the position to prove the main result of this paper, namely Theorem~\ref{thm:main}.

As already anticipated, in order to get the result in full generality, dropping the smoothness of $\mu$ and $\nu$ and assumptions (cost2reg) and (cvxreg), we will perform an approximation procedure with more regular objects: we will then use Proposition~\ref{corollario a 4gi su intornino} for the smooth case and rely on Proposition~\ref{prop:stability} to conclude. We now prove the last ingredient, which is a general approximation result for $\lambda$-concave functions, which is needed in order to deal with the exceptional set $\Sigma$ which appears in Proposition~\ref{corollario a 4gi su intornino}.

\begin{prop}[Approximation of $\lambda$-concave functions]\label{prop:approxphi}
Let $\phi:  (\M,g) \to \mathbb{R}$ be an $L$-Lipschitz $\lambda$-concave function such that $\| \phi\|_{\infty} \leq M$ and $\ell$ be a convex function that satisfies (cvxreg). Suppose that there is an open set $\Omega \subseteq \M$ such that $\phi \in C^2(\Omega)$. Then for every $K \subseteq \Omega$ compact there exists $A>0$ which depends only on $\M$ and a sequence $\phi_m:  (\M,g) \to \mathbb{R}$ such that
\begin{itemize}
\item[(i)] $\phi_m \in C^2(\M)$; $\phi_m$ is also $(AL+AM)$-Lipschitz and $(A\lambda+AM+AL)$-concave; 
\item[(ii)] $\phi_m \stackrel{\M}{\to} \phi$ uniformly, $\nabla \phi_m \stackrel{L^p(\M)}{\longrightarrow} \nabla \phi $ for every $p \geq 1$ and $D^2 \phi_m \stackrel{K}{\to} D^2 \phi$ uniformly;
\item[(iii)] $\div \big( \ell' ( \nabla \phi_m ) \big) \leq  C$ distributionally in $\M$, where $C$ depends only on $L$, $\lambda$, $M$, $\M$ and $\ell$.
 \end{itemize}
\end{prop}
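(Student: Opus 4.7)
The strategy I would adopt is the classical partition-of-unity mollification, combined with the simple observation that the Jacobian $D\ell'$ of the map $v\mapsto \ell'(v)$ is positive semidefinite, so that an upper bound on $D^2\phi$ automatically yields an upper bound on $\div(\ell'(\nabla \phi))$.

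First I would fix, once and for all, a finite atlas $\{(U_i,\kappa_i)\}_{i=1}^N$ of $\M$ together with a subordinate smooth partition of unity $\{\eta_i\}$, chosen so that the transition maps, the pullback metrics $(\kappa_i^{-1})^*g$ (bilipschitz equivalent with the Euclidean metric), the corresponding Christoffel symbols, and the $C^2$ norms of the $\eta_i$'s are all uniformly bounded by a single constant depending only on $\M$, denoted $A$ (enlarged as needed in the sequel). Setting $\tilde\phi_i:=\phi\circ\kappa_i^{-1}$, I would mollify each $\tilde\phi_i$ with a standard Euclidean mollifier $\rho_{1/m}$ of scale $1/m$ and glue the pieces back on $\M$:
\[
\phi_m:=\sum_{i=1}^N\eta_i\cdot\bigl((\tilde\phi_i*\rho_{1/m})\circ\kappa_i\bigr).
\]
Euclidean convolution preserves both the Lipschitz constant and the semiconcavity constant (both encoded by translation-invariant inequalities), so in each chart $\tilde\phi_i*\rho_{1/m}$ has Lipschitz constant $\le AL$ and is $A\lambda$-semiconcave.

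For properties $(i)$ and $(ii)$, I would differentiate $\phi_m$ and use $\sum_i\nabla\eta_i\equiv 0$, $\sum_i D^2\eta_i\equiv 0$ to write
\[
\nabla\phi_m=\sum_i \eta_i\nabla\bigl((\tilde\phi_i*\rho_{1/m})\circ\kappa_i\bigr)+\sum_i(\nabla\eta_i)\bigl((\tilde\phi_i*\rho_{1/m})\circ\kappa_i-\phi\bigr),
\]
and similarly for $D^2\phi_m$. Since $\|\tilde\phi_i*\rho_{1/m}\|_\infty\le M$ and $\|\tilde\phi_i*\rho_{1/m}-\tilde\phi_i\|_\infty\to 0$, the first expression is bounded by $A(L+M)$ and the analogous expression for $D^2\phi_m$ is bounded above by $A(\lambda+L+M)$, giving $(i)$. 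Uniform convergence $\phi_m\to\phi$ is standard for mollifications of continuous functions; $L^p$ convergence of $\nabla\phi_m$ follows from $\nabla\phi\in L^\infty(\M)\subset L^p(\M)$; uniform convergence of $D^2\phi_m\to D^2\phi$ on compact $K\subset\Omega$ uses $\phi\in C^2(\Omega)$ to move the derivatives onto $\phi$ inside the convolution on a slightly enlarged compact set.

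The real content lies in $(iii)$. A direct computation for smooth functions $u$ gives, away from $\nabla u=0$,
\[
\div\bigl(\ell'(\nabla u)\bigr)=\tr\bigl(D\ell'(\nabla u)\cdot D^2 u\bigr),\qquad D\ell'(p)=\ell''(|p|)\,\hat p\otimes\hat p+\tfrac{\ell'(|p|)}{|p|}\bigl(I-\hat p\otimes\hat p\bigr),
\]
with $\hat p=p/|p|$. Since $\ell$ is convex and increasing with $\ell'(0)=0$, the two eigenvalues $\ell''(|p|)$ and $\ell'(|p|)/|p|$ of $D\ell'(p)$ are both nonnegative, so $D\ell'(\nabla\phi_m)\ge 0$ as a symmetric endomorphism; near $|\nabla u|=0$ everything vanishes because $\ell\equiv 0$ in a neighborhood of $0$ by (cvxreg), avoiding any singularity. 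Applying the elementary bound $\tr(AB)\le\lambda_{\max}(B)\tr(A)$ (valid for symmetric $A\ge 0$ and symmetric $B$) together with the semiconcavity bound $D^2\phi_m\le A(\lambda+L+M)$ from $(i)$ yields
\[
\div\bigl(\ell'(\nabla\phi_m)\bigr)\le A(\lambda+L+M)\Bigl(\ell''(|\nabla\phi_m|)+(n-1)\tfrac{\ell'(|\nabla\phi_m|)}{|\nabla\phi_m|}\Bigr)\le C,
\]
where $C=C(L,\lambda,M,\M,\ell)$ since $|\nabla\phi_m|$ ranges in $[0,A(L+M)]$ and both $\ell'$ and $\ell''$ are continuous. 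Because $\phi_m$ is smooth the bound holds pointwise, a fortiori distributionally.

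The main obstacle I expect is the bookkeeping in the first step: collecting the bilipschitz distortions of charts, the bounds on the Christoffel symbols, and the $C^2$ norms of the $\eta_i$'s into a single constant $A=A(\M)$, and checking that the Euclidean semiconcavity estimate transfers faithfully to the Riemannian one up to terms absorbable in $A$. Once this is in place, $(iii)$ is essentially the same computation used in the proof of Proposition~\ref{corollario a 4gi su intornino}, with the only new ingredient being the positive semidefiniteness of $D\ell'$.
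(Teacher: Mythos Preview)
Your proposal is correct and follows essentially the same strategy as the paper: partition-of-unity mollification in charts for (i)--(ii), and for (iii) the observation that $\div(\ell'(\nabla\phi_m))$ is a trace of $D^2\phi_m$ against a positive semidefinite weight, so the semiconcavity upper bound suffices. The only cosmetic differences are that the paper mollifies $\phi\chi_i$ and then sums, whereas you mollify $\tilde\phi_i$ and then multiply by $\eta_i$ (using $\sum\nabla\eta_i=0$ to handle the cross terms), and that the paper carries out (iii) in geodesic normal coordinates aligned with $\nabla\phi_m$ rather than invoking the eigenvalue decomposition of $D\ell'$ and the trace inequality; both routes yield the identical bound $\ell''(|\nabla\phi_m|)D_{11}\phi_m+\tfrac{\ell'(|\nabla\phi_m|)}{|\nabla\phi_m|}\sum_{i\ge 2}D_{ii}\phi_m\le C$.
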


\begin{proof} To prove the existence of such sequence, it is sufficient to use a finite $C^2$ partition of unity $\{ \chi_i\}_{i=1}^N$  such that $|\nabla \chi_i | \leq C_1$ and $\|D^2 \chi_i\| \leq C_2$ supported on some $\{ \Omega_i\}_{i=1}^N$, then do a compactly supported convolution in charts $ \phi_{m,i}= \eta^i_m \ast (\phi \chi_i)$ and then sum up the contributions again. It is clear that $\phi_{m,i} \in C^2(\M)$ and so we have also $\phi_m \in C^2(\M)$. Moreover, by the usual properties of convolutions, we have $\phi_{m,i} \to \phi_i$ uniformly on $\M$, $\nabla \phi_{m,i} \to \nabla \phi_i$ in $L^p(\M)$, and $D^2\phi_{m,i} \to D^2\phi_i$ uniformly on $K \cap \Omega_i$, so we deduce $(ii)$ by summing up the separate contributions.
For the first point, we have
$$\nabla \phi_m = \sum_i \eta_m \ast (\chi_i \nabla \phi) + \eta_m \ast (\phi \nabla \chi_i )$$ 
 $$| \nabla \phi_m(p)| \leq L \sum_i \eta^i_m \ast \chi_i + M \sum_i \eta^i_m \ast (|\nabla \chi_i|) \leq LN + MNC_1,$$
we conclude by choosing $A \geq \max\{N, NC_1\}$. With a similar strategy, we obtain the uniform bound for the quasi-concavity using the following estimate for the distributional second derivative  
$$D^2(\phi \chi_i) = \chi_i D^2 \phi + \nabla \phi \otimes \nabla \chi_i + \phi D^2 \chi_i \leq \lambda + L C_1 + MC_2;$$
convolving and summing up the contribution we obtain the claim for the quasi-concavity by choosing $A \geq \max\{ N, NC_1, NC_2\}$

In order to do the final estimate we observe that if $\nabla \phi_m (p)=0$ there is nothing to prove since $\ell'(\nabla \phi_m)\equiv0$ in a neighborhood of $p$, while if $\nabla \phi_m (p) \neq 0$ we can consider $X_1= \nabla \phi_m / \| \nabla \phi_m\| (q)$ and $(X_i)_{i=1\ldots n}$ an orthonormal basis on $T_q \M$ complementing $X_1$: we extend this basis as geodesic normal coordinates in a small neighborhood of $q$. 

We thus have $\div(V)= \sum_i g(\nabla_{X_i} V,X_i) = \sum_i  \nabla_{X_i} g(V,X_i)$ and in particular $\div (\nabla \phi)= \sum_i \nabla_{X_i} \nabla_{X_i} \phi$: we will denote $D_{ii}\phi = \nabla_{X_i} \nabla_{X_i} \phi$, and the quasi-concavity of $\phi_m$ implies $D_{ii} \phi_m (p) \leq (A\lambda+AM+AL)$ for every $i=1,\ldots, n$. We compute now
\begin{align*}
\div \big( \ell' ( \nabla \phi_m ) \big)  &= \div \big ( 2g'(|\nabla \phi_m|^2) \nabla \phi_m \big) \\ &= 2 g'(|\nabla\phi_m|^2) \div  ( \nabla \phi_m )  + 4 g''(|\nabla\phi_m|^2) \nabla \phi_m \cdot  D^2 \phi_m \cdot \nabla \phi_m \\ 
&= 2 g'(|\nabla\phi_m|^2) \sum_i D_{ii} \phi_m + 4 g''(|\nabla\phi_m|^2) |\nabla \phi_m|^2 D_{11} \phi_m \\&= \ell''( |\nabla \phi_m|) D_{11}\phi_m + 2g'(| \nabla \phi_m|^2) \sum_{i \geq 2} D_{ii} \phi_m \\&   \leq (A\lambda+AM+AL) \sup_{[0,(AL+AM)^2]} \ell'' + 2(n-1)(A\lambda+AM+AL)g'((L+1)^2). 
\end{align*}
This concludes the proof.
\end{proof}

We have now all the ingredients to prove the main Theorem~\ref{thm:main}.

\begin{proof}[Proof of Theorem~\ref{thm:main}.] We first assume that $\mu, \nu$ are smooth and bounded below by some positive constant, $c$ satisfies (cost2reg) and $\ell$ satisfies (cvxreg). Then, we can apply the results of the previous sections, in particular, we have that $\phi, \psi$ are smooth in $\Omega= \M  \setminus (\Sigma_\mu \cup \Sigma_\nu)$. Since $\phi, \psi$ are $\lambda$-concave for some $\lambda$ and smooth in $\Omega$, we can use Proposition~\ref{prop:approxphi} applied to any compact set $D \subseteq \Omega$ to have sequences of smooth functions $\phi_m$ and $\psi_m$. In particular, we have
\begin{align*} 
 \int_{\mathcal{M}}  \ell' (\nabla \phi) \cdot \nabla \mu \vol &= \lim_{m \to \infty}  \int_{\mathcal{M}}  \ell' (\nabla \phi_m) \cdot \nabla \mu \vol \\
  & = \lim_{m \to \infty}  \int_{\mathcal{M}} \ell' (\nabla \phi_m) \cdot \nabla \mu  \vol  \\
 & =  - \lim_{m \to \infty}  \int_{\mathcal{M}} \div \big(  \ell' (\nabla \phi_m) \big)\  \mu  \vol \\
 & \geq   -C\mu(\mathcal{M} \setminus D)  -\lim_{m \to \infty}  \int_{D} \div \big(  \ell' (\nabla \phi_m) \big)\  \mu \vol \\
 & = - C\mu(\mathcal{M} \setminus D) - \int_{D} \div \big(  \ell' (\nabla \phi) \big) \, d \mu
 \end{align*}
Similarly, we can deduce
$$ \int_{\mathcal{M}}  \ell' (\nabla \psi) \cdot \nabla \nu \vol \geq  - C\nu(\mathcal{M} \setminus \T (D)) - \int_{\T(D)} \div \big(  \ell' (\nabla \psi) \big)  \, d \nu. $$
We can now consider $D$ big enough such that $\mu(\mathcal{M} \setminus D) + \nu(\mathcal{M} \setminus \T (D)) \leq \eps$ and summing up the previous inequalities we get
\begin{align*} 
\int_{\M}  \Bigl(  \ell' (\nabla \phi) \cdot \nabla \mu +   \ell' (\nabla \psi) \cdot \nabla \nu  \Bigr) \vol   &\geq - C \eps - \int_{D} \div \big(  \ell' (\nabla \phi) \big) \, d \mu - 
\int_{\T(D)} \div \big(  \ell' (\nabla \psi) \big)  \, d \nu \\
& = - C \eps - \int_D   \Bigl(  \div \big(  \ell' (\nabla \phi (q)) \big) +  \div \big(  \ell' (\nabla \psi( \T(q)) \big)  \Bigr) \, d \mu(q).
 \end{align*}
We can  now apply Proposition~\ref{corollario a 4gi su intornino} for every point $q \in D \subseteq \M \setminus \Sigma$ to deduce 
$$ \int_{\M}  \Bigl(  \ell' (\nabla \phi) \cdot \nabla \mu +   \ell' (\nabla \psi) \cdot \nabla \nu  \Bigr) \vol  \geq - C \eps + K \int_D \ell'(h'(d(q,\T(q))) d(q,\T(q))\, d\mu(q).$$
Letting now $D \uparrow \Omega$ we have $\eps \to 0$ and we get the conclusion by monotone convergence and the fact that $\mu(\M\setminus\Omega)=0$.

In order to prove \eqref{eqn:5GIW11} for any $\mu, \nu \in W^{1,1}(\M)$, we perform another approximation argument: we consider $\mu_m, \nu_m$ smooth positive densities which converge in $L^1$ to $\mu, \nu$ respectively and such that $\nabla \mu_m \to \nabla \mu$ and $\nabla \nu_m \to \nabla\nu $ in $L^1(\M)$, where we can assume that the convergence is dominated. Moreover, thanks to Proposition \ref{prop:stability} we also have that $\nabla \phi_m \to \nabla \phi$ pointwise $\vol$-almost everywhere, where $\phi_m$ are the Kantorovich potentials for $\mu_m,\nu_m$, and so we can pass to the limit via dominated convergence since we also have $\phi_m, \phi$ uniformly Lipschitz. In order to prove the theorem also for $c$ which satisfies (cost), we can approximate it uniformly with $c_m$ which satisfies (cost2reg) and again we conclude using Proposition \ref{prop:stability}.
Finally, we can approximate every $\ell$ satisfying (cvx) with some $\ell_m$ satisfying (cvxreg) in such a way that $\ell_m' \uparrow \ell'$ and we conclude again by dominated convergence (notice that this approximation argument works also when $\ell'(0) \neq 0$, paying attention to Remark~\ref{rmk:ell} for the correct interpretation of \eqref{eqn:5GIW11}).
\end{proof}



\end{document}